\documentclass[a4paper,11pt]{article} \usepackage[T1]{fontenc}
\usepackage{graphicx}
\usepackage{mathrsfs}
\usepackage{verbatim}
\usepackage{float}
\usepackage{tablefootnote}
\usepackage{bm}
\usepackage[utf8]{inputenc} 
\usepackage[english]{babel} 
\usepackage{braket}
\usepackage{amsmath}
\usepackage{amssymb}
\usepackage{amsthm}
\usepackage{multirow}
\usepackage{threeparttable}
\usepackage{tikz-cd}
\usepackage{pgfplots}
\pgfplotsset{compat=1.18}
\usetikzlibrary{fit,shapes.misc}
\usepackage{xcolor}

\definecolor{mygreen}{RGB}{0,170,0}
\definecolor{myyellow}{RGB}{255,190,0}
\definecolor{myorange}{RGB}{255,105,35}
\usepackage{array}
\usepackage{longtable}
\usepackage{caption}
\usepackage[most]{tcolorbox}
\usepackage[toc,page]{appendix}
\usepackage{hyperref}
\hypersetup{
	colorlinks=true,
	linkcolor=blue,      
	urlcolor=blue,
	citecolor=blue
}
\newcolumntype{C}[1]{>{\centering\arraybackslash}p{#1}}
\newcolumntype{P}[1]{>{\centering\arraybackslash}p{#1}}

\DeclareMathOperator{\NE}{NE}
\DeclareMathOperator{\Nef}{Nef}

\DeclareMathOperator{\Eff}{Eff}

\DeclareMathOperator{\Exc}{Exc}
\DeclareMathOperator{\Ext}{Ext}
\DeclareMathOperator{\dom}{dom}
\DeclareMathOperator{\Bl}{Bl}

\DeclareMathOperator{\length}{length}
\DeclareMathOperator{\Mov}{Mov}
\theoremstyle{plain}
\newtheorem{proposition}{Proposition}[section]
\newtheorem{theorem}[proposition]{Theorem}
\newtheorem{lemma}[proposition]{Lemma}
\newtheorem{corollary}[proposition]{Corollary}
\theoremstyle{definition}
\newtheorem{definition}[proposition]{Definition}
\newtheorem{remark}[proposition]{Remark}

\newtheorem*{notations}{Notation}

\theoremstyle{plain}
\newcommand{\thistheoremname}{}
\newtheorem*{genericthm}{\thistheoremname}

\usepackage{titlesec}

\title{On $\mathbb{P}^1$-bundles on $\mathbb{P}^3$ that are small modifications of a Fano fourfold}
\author{Pier Roberto Pastorino}

\titleformat{\section}         
{\normalfont\large\bfseries}  
{\thesection}{1em}{}

\titleformat{\subsection}
{\normalfont\normalsize\bfseries}
{\thesubsection}{1em}{}

\titleformat{\subsubsection}
{\normalfont\small\bfseries}
{\thesubsubsection}{1em}{}
\begin{document}
	\maketitle
	\begin{abstract}
		We find necessary conditions for the projectivization of a rank two vector bundle $\mathcal{E}$ on $\mathbb{P}^3_{\mathbb{C}}$ to be related to a smooth Fano 4-fold by a small $\mathbb{Q}$-factorial modification (SQM). Furthermore, we fully describe the birational geometry of $\mathbb{P}_{\mathbb{P}^3}(\mathcal{E})$ when $\mathcal{E}$ is stable, with $c_1=-1$, $c_2=2$, starting from the description of the moduli space of such bundles \cite{HS81}. In this case, we prove that there exists a flip $X^+\dasharrow\mathbb{P}_{\mathbb{P}^3}(\mathcal{E})$, where $X^+$ is smooth and Fano. We also determine the birational geometry and numerical invariants of $X^+$.  
	\end{abstract}
	\tableofcontents
	\section{Introduction}
	Fano varieties of the form $\mathbb{P}_{\mathbb{P}^3}(\mathcal{E})$, where $\mathcal{E}$ is a rank two vector bundle on $\mathbb{P}^3_{\mathbb{C}}$ have been completely classified in \cite{SW90}. In this paper, we consider the problem of classifying rank-two vector bundles $\mathcal{E}$ on $\mathbb{P}^3_{\mathbb{C}}$ whose projectivization admits a small $\mathbb{Q}$-factorial modification (SQM) to a smooth Fano 4-fold; that is, such that there exists a birational map
	\(
	\mathbb{P}_{\mathbb{P}^3}(\mathcal{E})\dasharrow X^+
	\)
	that is an isomorphism in codimension one, where $X^+$ is a smooth Fano 4-fold.

	Our main theorem states that a necessary condition for $\mathcal{E}$ to be such a vector bundle is that $\mathcal{E}$ be stable and, up to normalization, have
	\[
	c_1=-1,\qquad c_2\in\{2,4,6\}
	\]
	(Theorem \ref{TheoremNonFanoCase}(a)).
	
	On the other hand, we prove that if $\mathcal{E}$ is stable, $c_1=-1$, and $c_2=2$, then $\mathbb{P}_{\mathbb{P}^3}(\mathcal{E})$ is indeed a small $\mathbb{Q}$-factorial modification of a smooth Fano 4-fold (Theorem \ref{TheoremNonFanoCase}(b)). Our study of this case relies on the description of the moduli space of stable vector bundles on $\mathbb{P}_{\mathbb{C}}^3$ with the above Chern classes given by R. Hartshorne and I. Sols \cite{HS81}. In their paper, the authors study the zero locus of a nonzero section of \(\mathcal E(1)\), and use it to obtain a description of certain sections of \(\mathcal E(2)\) with particularly useful geometric properties. They prove that the moduli space of such vector bundles is an irreducible nonsingular rational variety of dimension $11$ and admits a universal family \cite[Theorem 3.1]{HS81}.  
	This study also determines the set of jumping lines for $\mathcal{E}$. The birational geometry of $\mathbb{P}_{\mathbb{P}^3}(\mathcal{E})$, however, remains largely unexplored. In this paper, we provide a detailed description of its Mori cone, of the Fano 4-fold obtained from it by an SQM, and of the SQM itself, which consists of one flip. We show that all projective bundles of this form are related by SQMs to a Fano 4-fold $X^+$ admitting two extremal contractions: one small and one divisorial of type $(3,1)$ (see Theorem \ref{TheoremBir}). Moreover, we have
	\[
	c_1(X^+)^4=289,\qquad
	c_1(X^+)^2c_2(X^+)=166,\qquad
	h^0(-K_{X^+})=63,\qquad
	h^{2,2}(X^+)=3,
	\]
	and
	\[
	h^{i,j}(X^+)=0\qquad\text{for all }i\ne j.
	\]
	We compared these invariants with those appearing in several existing databases of Fano 4-folds and found no matches.
	
	The cases $c_2=4$ and $c_2=6$ appear to be substantially more difficult from the point of view of birational geometry, and a thorough study of $\mathbb{P}_{\mathbb{P}^3}(\mathcal{E})$ in these cases may be the subject of future work. For instance, although a description of the moduli space of stable vector bundles with $c_1=-1$ and $c_2=4$ is available \cite{BM85}, its geometry appears to be considerably richer and more varied than in the case $c_2=2$.

	\paragraph{Notation and Conventions.}
	We work over the field of complex numbers. Let $X$ be a smooth projective variety of arbitrary dimension. 
	
	A \emph{small $\mathbb{Q}$-factorial modification} (SQM) is a birational map $X\dashrightarrow \tilde{X}$, where $\tilde{X}$ is a normal, projective and $\mathbb{Q}$-factorial variety, that is an isomorphism in codimension one.
	
	$N_1(X)$ (respectively $N^1(X)$) denotes the real vector space of one-cycles (respectively, Cartier divisors) with real coefficients, modulo numerical equivalence, and $\dim N_1(X)=\dim N^1(X)=\rho_X$ is the Picard number of $X$. 
	We denote by $\Nef(X)$ the nef cone of $X$, by $\Mov(X)$ the movable cone of $X$ and by $\Eff(X)$ the effective cone of $X$.
	
	The symbol $\sim$ stands for linear equivalence of divisors, while $\equiv$ stands for numerical equivalence of one-cycles, as well as divisors. 
	
	$\NE(X)\subset N_1(X)$ is the convex cone generated by the classes of effective curves. An \emph{extremal} ray $R$ of $\NE(X)$ is a one dimensional ray such that if $a,b\in \NE(X)$, we have $a+b\in R \iff a,b\in R$. Note that we do not require that $R$ is $K$-negative for it to be called \emph{extremal}.
	
	A \emph{contraction} is a surjective morphism $\varphi:X\to Y$ with connected fibers, where $Y$ is normal and projective. A contraction is \emph{extremal} if it contracts curves whose classes in $\NE(X)$ belong to an extremal ray $R$. 
	
	By \emph{flip}, we refer to the flip of a small extremal contraction. 
		
	Let $W$ be a scheme defined as $Proj(A)$ where $A$ is any commutative ring with unity. Let $I$ be an ideal of $A$. Then we sometimes denote by $\Bl_I W$ the blow-up of the closed (not necessarily reduced) subscheme of $W$ defined as $Proj(A/I)$.
	
	Recall that
	\[
	H^2(\mathbb P^3;\mathbb Z)\simeq\mathbb Z,\qquad
	H^4(\mathbb P^3;\mathbb Z)\simeq\mathbb Z,
	\]
	with generators $\mathit{h}=c_1(\mathcal O_{\mathbb P^3}(1))$ and $\mathit{h}^2$, respectively. If $\mathcal E$ is a vector bundle on $\mathbb P^3$, we denote by $c_i(\mathcal{E})\in H^{2i}(\mathbb{P}^3;\mathbb{Z})$ its $i$-th Chern class, and we denote by $c_1$ and $c_2$ the unique integers such that
	\[
	c_1(\mathcal E)=c_1\mathit{h},\qquad c_2(\mathcal E)=c_2\mathit{h}^2.
	\]
	We will sometimes confuse the notations of $c_i(\mathcal{E})$ and $c_i$, when there is no possible ambiguity.

	Moreover, if $X=\mathbb{P}_{\mathbb{P}^3}(\mathcal{E})$, we always denote its tautological divisor by $\xi$, or $\xi_{\mathcal{E}}$, and we denote by $H$ the pull-back in $X$ of a plane in $\mathbb{P}^3$ through the projection $\pi\colon \mathbb{P}_{\mathbb{P}^3}(\mathcal{E})\to \mathbb{P}^3$. 
	
	We denote by $\mathcal{E}(m)$ the tensor product $\mathcal{E}\otimes\mathcal{O}_{\mathbb{P}^3}(m)$, for any $m\in\mathbb{Z}$. Recall that we have $\xi_{\mathcal{E}(m)}=\xi +mH$ when we identify $\mathbb{P}_{\mathbb{P}^3}(\mathcal{E}(m))\simeq\mathbb{P}_{\mathbb{P}^3}(\mathcal{E})$, for any $m\in\mathbb{Z}$.      
	
	Recall that the splitting type of the restriction of a vector bundle \(\mathcal E\) on \(\mathbb{P}^n\) to a general line is constant. A line \(L\subset\mathbb{P}^n\) is called a \emph{jumping line} for \(\mathcal{E}\) if the splitting type of \(\mathcal{E}|_L\) differs from that of the restriction to a general line.
	
	\paragraph{Acknowledgements.}
	I am very grateful to my advisor Cinzia Casagrande, for her constant guidance and support. 
	
	\section{The main Theorem}
	We are going to prove the following theorem:  
	
	\begin{theorem}\label{TheoremNonFanoCase}
		Let $\mathcal{E}$ be a rank 2 vector bundle on $\mathbb{P}^3$, and $X:=\mathbb{P}(\mathcal{E})$.
		We have the following:
		\begin{itemize}
			\item[(a)] If $X$ is not Fano, but related to a smooth Fano 4-fold by a SQM, then, up to normalization, $\mathcal{E}$ is a stable vector bundle, with $c_1=-1$ and $c_2\in\{2,4,6\}$. 
			\item[(b)] If $\mathcal{E}$ is a stable vector bundle with $c_1=-1$ and $c_2=2$, then $X$ is obtained from a smooth Fano 4-fold by a flip. 
		\end{itemize}
	\end{theorem}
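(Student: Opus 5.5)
Both parts rest on the structure of the cones of $X=\mathbb{P}(\mathcal{E})$, which has $\rho_X=2$. The first ray of $\NE(X)$ is spanned by a line $f$ in a fibre of $\pi$, and $\Nef(X)$ has one side spanned by $H$. Normalize $\mathcal{E}$ so that $c_1\in\{0,-1\}$. Then
\[
-K_X=2\xi+(4-c_1)H, \qquad \xi^2=c_1\xi H-c_2H^2 .
\]
If $X\dashrightarrow X^+$ is an SQM with $X^+$ Fano, then $X^+$ is a Mori dream space and $-K_X$ lies in the interior of $\Mov(X)$. Since $X$ is not Fano, the other extremal ray $R$ of $\NE(X)$ is $K_X$-nonnegative, and it cannot be a divisorial or fibre-type ray, because then $-K_X$ could not be in the interior of the chamber of a Fano model. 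Moreover $X^+$ is reached by a sequence of flips starting from $X$.

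\textbf{Part (a).} First I show that $\mathcal{E}$ is stable. If $h^0(\mathcal{E})\neq 0$ with $c_1\le 0$, a section of $\mathcal{E}$ produces either a divisor $\xi-aH$ with $a\ge 0$, or a section of $\pi$ over a curve, that bounds $\Eff(X)$ or $\Mov(X)$ on the wrong side of $-K_X$. Concretely, I intersect $-K_X$ with the curves lying in the surface $\pi^{-1}(\text{zero locus})$, or over a general line, and check that $-\frac12 K_X=\xi+\frac{4-c_1}{2}H$ is not in the interior of $\Mov(X)$. The split and semistable cases are then excluded directly.

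Next I rule out $c_1=0$. For stable $\mathcal{E}$ the splitting type on a general line is $(0,0)$ or $(0,-1)$. Jumping lines $L$ with splitting $(a,-a+c_1)$ give curves $\sigma_L\subset\pi^{-1}(L)$ with $\xi\cdot\sigma_L=-a$, and these span the second ray. The condition that $-K_X$ is positive on the flipped side, together with Riemann--Roch/Barth bounds $h^0(\mathcal{E}(k))$ and $-K_X^4>0$ for bigness, bounds $c_2$. Here
\[
(-K_X)^4=(2\xi+(4-c_1)H)^4
\]
is computed via $\xi^2=c_1\xi H-c_2H^2$. The parity condition (Schwarzenberger: $c_1c_2\equiv 0 \bmod 2$ for $c_1$ odd is automatic, but stability combined with the $c_1=0$ analysis forces contradictions) should leave $c_1=-1$ and $c_2$ even, $c_2\le 6$. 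The main obstacle is proving that the flipped locus actually forces the effectivity and bigness bounds. I would use that $\xi+kH$ with $\xi^3H\ldots$ must be big and movable, and that $-K_X$ restricted to the flipping locus gives a bound on the maximal jump $a$ in terms of $c_2$.

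\textbf{Part (b).} I use Hartshorne--Sols \cite{HS81}. For $c_1=-1$, $c_2=2$, a nonzero section of $\mathcal{E}(1)$ vanishes on two skew lines or a double structure. There is a unique maximal jumping line family with splitting $(1,-2)$; in fact the jumping lines of order $\ge 2$ are finitely many, or form a specific curve. The negative curves $\sigma_L$, with
\[
-K_X\cdot\sigma_L=2(-1)+5-\ldots<0 \text{ or } \le 0 ,
\]
sweep out a small locus, a union of surfaces $\pi^{-1}(L)\cong\mathbb{F}_3$ with negative sections. I show these span $R$ and that the contraction of $R$ is small with exceptional locus these sections. Its normal bundle is $\mathcal{O}(-1)^{\oplus 2}$-type (or $\mathcal{O}(-1)\oplus\mathcal{O}(-2)$), so a standard (Atiyah-type/blow-up) flip exists; I construct it by blowing up the locus and contracting in the other direction. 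Then I check that $X^+$ is smooth and that $-K_{X^+}$ is positive on both rays of $\NE(X^+)$, the new ray being the flipped curves and the other being computed from the next chamber of $\Mov(X)$.
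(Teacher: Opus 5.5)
\textbf{There are genuine gaps in both parts, and they share a root.} You never use the structure of SQMs to smooth Fano 4-folds (Lemma~\ref{LemmaSQMFano}). That lemma says the indeterminacy locus of $X\dashrightarrow X^+$ is a disjoint union of exceptional lines $\ell$ with $\mathcal{N}_{\ell/X}\simeq\mathcal{O}(-1)^{\oplus 3}$ and $-K_X\cdot\ell=-1$, while every other irreducible curve has $-K_X\cdot C\ge 1+s$. Saying that $-K_X$ lies on the ``wrong side'' of $\Mov(X)$ or $\Eff(X)$ does not by itself constrain $c_1,c_2$. The steps you flag as obstacles are exactly the ones left unproved.

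In part (a) the problems are as follows.
\begin{itemize}
\item You give no argument excluding $c_1=0$, and your parity remark is inverted: $c_1c_2\equiv 0\pmod 2$ is automatic for $c_1$ even and forces $c_2$ even for $c_1$ odd. The paper gets $c_1$ odd from the lemma. Since $2=-K_X\cdot\mathbf{f}\ge 1+s$, a fibre meets $\ell$ at most once, so $\pi(\ell)$ is a smooth rational curve $C$ of degree $d$. Then $\ell$ is the negative section of $\pi^{-1}(C)\simeq\mathbb{F}_e$, and $-1=4d-e$ gives $e=4d+1$ odd. Finally $e\equiv c_1d\pmod 2$.
\item Your bound on $c_2$ via ``$(-K_X)^4>0$ for bigness'' is invalid, because $-K_X$ is not nef. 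In fact $(-K_X)^4=(-K_{X^+})^4-r$ with $r$ the number of exceptional lines, so its sign is not controlled. For $c_1=-1$ one has $(-K_X)^4=544-128c_2$, so your inequality would exclude $c_2=6$ without justification.
\item The correct test divisor is the nef and big $T=2d\xi+(5d+1)H$ with $T\cdot\mathbf{l}_0=0$. It uses $\xi\cdot\mathbf{l}_0=-\tfrac{5d+1}{2}$, which comes from the $\mathbb{F}_{4d+1}$ structure. Then $T^4>0$ gives $c_2<\tfrac{17d^2+8d+1}{4d^2}\le\tfrac{13}{2}$.
\item Stability also comes from the lemma. Since $\xi+3H=\tfrac12(-K_X+H)$ is nef, the splitting type on lines meeting the zero locus of a section of $\mathcal{E}(-1)$ or of $\mathcal{E}$ is forced. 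This yields either a parity contradiction or infinitely many curves of anticanonical degree $-1$.
\item The lower bound $c_2\ge 2$ additionally needs the Schwarzenberger inequality $c_1^2\le 4c_2$ together with parity.
\end{itemize}

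In part (b) the flipping locus is misidentified. For a jumping line of type $(1,-2)$, $\pi^{-1}(L)\simeq\mathbb{F}_3$ and its negative section has $-K_X\cdot\sigma=4-3=1>0$. These curves are not flipped: they sweep out the divisor $F\in|\xi+H|$, and their transforms end up in the two-dimensional fibres of the divisorial contraction of $X^+$. The flipped curve is the negative section $\mathbf{l}_0$ of $\pi^{-1}(L_0)\simeq\mathbb{F}_5$, where $L_0$ is the unique jumping line of type $(2,-3)$ \cite[Proposition 4.1]{HS81}, and $-K_X\cdot\mathbf{l}_0=-1$.

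Two smaller errors. A section of $\mathcal{E}(1)$ never vanishes on two skew lines: $\omega_Z\simeq\mathcal{O}_Z(-3)$ gives $p_a(Z)=-2$, so $Z$ is the double line of Proposition~\ref{PropositionZ}. Also, a curve in a 4-fold has a rank-three normal bundle. What must be proved is $\mathcal{N}_{\mathbf{l}_0/X}\simeq\mathcal{O}(-1)^{\oplus 3}$, so that the exceptional divisor of $\Bl_{\mathbf{l}_0}X$ is $\mathbb{P}^1\times\mathbb{P}^2$ and can be contracted along the other ruling to a smooth $X^+$.

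Finally, you cannot check that $X^+$ is Fano by testing $-K_{X^+}$ on the two rays of $\NE(X^+)$, since the second ray is unknown at that stage. The missing core of the argument is:
\begin{itemize}
\item $\mathcal{E}(3)$ is globally generated (Castelnuovo--Mumford), which gives $\NE(X)=\mathbb{R}_{\ge0}[\mathbf{f}]+\mathbb{R}_{\ge0}[\mathbf{l}_0]$.
\item A member $D\in|\xi+2H|$ is $\Bl_{\mathcal{C}_1\sqcup\mathcal{C}_2}\mathbb{P}^3$, for two conics in planes through $L_0$, and contains $\mathbf{l}_0$ as the transform of $L_0$.
\item A case analysis in $N_1(D)$, together with Lemma~\ref{LemmaLengthInclusions}, yields $-K_X\cdot C\ge s+1$ for every irreducible $C\ne\mathbf{l}_0$, where $s=\length(C\cap\mathbf{l}_0)$. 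It also yields the normal bundle of $\mathbf{l}_0$ and shows that the contraction of $\mathbf{l}_0$ is small.
\item Then $-K_{X^+}\cdot C^+=-K_X\cdot C-s\ge 1$, so $-K_{X^+}$ is strictly nef. Being big, it is ample by the basepoint-free theorem.
\end{itemize}
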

	
	We now briefly recall the definition and basic properties of stable vector bundles on $\mathbb{P}^n$ with rank $2$.
	
	\begin{definition}
		A vector bundle $\mathcal{E}$ of rank $2$ on $\mathbb{P}^n$ is \emph{stable} if for every invertible subsheaf $\mathcal{L}$ of $\mathcal{E}$,
		\begin{align*}
			c_1(\mathcal{L})<\frac{1}{2} c_1(\mathcal{E}).
		\end{align*} 
	\end{definition}
	
	\begin{remark}[\cite{Har78}, Remark 3.0.1]
		There is a well known elementary characterization of stable vector bundles that we are going to use in the next paragraphs. Let $\mathcal{E}$ be a rank $2$ vector bundle on $\mathbb{P}^n$ that is \emph{normalized}, i.e. $c_1(\mathcal{E})\in\{-1,0\}$. Then
		\begin{align*}
			\mathcal{E}\text{ is stable }\iff H^0(\mathcal{E})=0.
		\end{align*}
		
	\end{remark}
	
	This paper is devoted to providing a proof of Theorem \ref{TheoremNonFanoCase}, after going through some necessary auxiliary results. In Section 3 and Section 4 we will prove Theorem \ref{TheoremNonFanoCase} (a) and (b) respectively. 
	
	Moreover, in case (b), we also provide a full description of the birational geometry of $X$, as follows: 
	
	\begin{theorem}\label{TheoremBir}
		In case (b) of Theorem \ref{TheoremNonFanoCase}, let $\xi$ be the tautological divisor, and $H$ the pullback in $X$ of a plane in $\mathbb{P}^3$. Then 
		\begin{align*}
			\Nef(X)&=\mathbb{R}_{\ge 0}[H]+\mathbb{R}_{\ge 0}[\xi+3H].\\
			\Mov(X)&=\mathbb{R}_{\ge 0}[H]+\mathbb{R}_{\ge 0}[\xi+2H].\\
			\Eff(X)&=\mathbb{R}_{\ge 0}[H]+\mathbb{R}_{\ge 0}[\xi+H].
		\end{align*}  
		There exists a flip $\phi\colon X\dashrightarrow X^+$, where $X^+$ is a smooth Fano 4-fold that has two extremal contractions $\gamma_1\colon X^+\to Z_1$, $\gamma_2\colon X^+\to Z_2$ such that 
		\begin{itemize}
			\item $\gamma_1$ is a small contraction whose indeterminacy locus is a single \emph{exceptional plane} (in the sense of Lemma \ref{LemmaSQMFano}).
			\item $\gamma_2$ is a divisorial contraction that on the exceptional divisor is a quadric bundle onto a locally complete intersection curve $C$ in a variety $Z_2$, which is analytically locally a hypersurface in $\mathbb{C}^5$ \cite[Main Theorem (4q)]{Tag99}. All fibers of $\gamma_2$ are smooth, except for exactly one or two of them.
		\end{itemize}
		Moreover,  $c_1(X^+)^4=289$, $c_1(X^+)^2c_2(X^+)=166$,  $h^0(-K_{X^+})=63$, $h^{2,2}(X^+)=3$, and $h^{i,j}(X^{+})=0$ for all $i\ne j$.  
	\end{theorem}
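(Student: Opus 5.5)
We compute on $X=\mathbb{P}(\mathcal{E})$ with $c_1=-1$, $c_2=2$, where $\rho_X=2$, so every cone is spanned by two rays, one of them being $H$. We use
\[
\xi^2=-c_1\xi H-c_2H^2=\xi H-2H^2,\qquad \xi H^3=1,\qquad H^4=0,
\]
which gives $\xi^2H^2=1$, $\xi^3H=-1$ and $\xi^4=-3$. We also use $-K_X=2\xi+(4-c_1)H=2\xi+5H$.

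\emph{Effective cone.} The ray $\xi+H$ is effective, since $h^0(\mathcal{E}(1))\neq 0$ (by Riemann--Roch, or by \cite{HS81}). No class $\xi+tH$ with $t<1$ is effective: this reduces to $h^0(S^k\mathcal{E}(m))=0$ for $m<k$, which should follow from stability through restriction to a general line, where the splitting is $\mathcal{O}\oplus\mathcal{O}(-1)$. So $\Eff(X)=\langle H,\xi+H\rangle$.

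\emph{Movable cone.} Let $D\in|\xi+H|$ be the divisor corresponding to a section of $\mathcal{E}(1)$ vanishing on a curve $Y$ (two skew lines, or a double structure, by \cite{HS81}). Then $D\cong \Bl_Y\mathbb{P}^3$. For $\Mov(X)$ to have the edge $\xi+2H$, the restriction $(\xi+2H)|_D$ must be the pullback of $2h-E$. Since the quadrics through two skew lines define a map contracting the exceptional-type curves, this identifies the edge. Concretely, $|\xi+2H|$ should have no fixed divisor but be non-ample, with its base locus of codimension $\ge 2$. This follows from the sections of $\mathcal{E}(2)$ described in \cite{HS81}, which generate outside finitely many jumping lines.

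\emph{Nef cone and the flip.} We show that $\xi+3H$ is nef but not ample. The degree of $\mathcal{E}(1)$ restricted to a line $L$ is $-1+2$ generically, and jumping lines have splitting type $\mathcal{O}(a)\oplus\mathcal{O}(-1-a)$. Then $\xi+3H$ is nef if and only if $a\le 2$, which is where \cite{HS81} enters: the maximal jump is $a=1$ (splitting $\mathcal{O}(1)\oplus\mathcal{O}(-2)$, possibly up to a further jump), and the section curves $\sigma_L$ satisfy $\xi\cdot\sigma_L=-2$ or $-3$. From this we read off the extremal ray, its locus (a finite union of curves over the jumping lines of maximal order, or a surface), and that its contraction is small. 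The sign $-K_X\cdot\sigma_L<0$ or $\ge 0$ decides whether $X$ is Fano. We expect $K_X$ to be positive on this ray, with $X$ the result of flipping a $K$-negative small contraction of $X^+$. We then take $\phi$ to be the anti-flip, i.e.\ the flip of the $K$-positive small ray, using Lemma \ref{LemmaSQMFano}: its locus is a $\mathbb{P}^1$ with normal bundle $\mathcal{O}(-1)^{3}$, and on $X^+$ it becomes an exceptional plane with normal bundle $\mathcal{O}(-1)^2$. This also gives the description of $\gamma_1$. Then $X^+$ is smooth, and $\Nef(X^+)=\langle \xi+3H,\xi+2H\rangle$ under the identification $N^1(X)=N^1(X^+)$. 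Since $-K=2\xi+5H=(\xi+3H)+(\xi+2H)$ lies in the interior of this cone, $X^+$ is Fano.

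\emph{Second contraction.} The other ray of $X^+$, namely $\xi+2H$, contracts the strict transform of $D$. We identify $D^+$ with the quadric fibration coming from $\Bl_Y\mathbb{P}^3$ with its pencils of quadrics, over a curve $C$ (of genus $0$, a conic-type curve). This makes $\gamma_2$ of type $(3,1)$. The singular fibers correspond to degenerate members, giving one or two of them according to whether $Y$ is reduced. The identification of $Z_2$ comes from \cite{Tag99}.

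\emph{Invariants.} Since the flip replaces a $\mathbb{P}^1$ with a $\mathbb{P}^2$, we use the standard correction formulas: $(-K)^4$ changes by a fixed amount per flipped line, and $h^{2,2}$ increases by $1$ per line, starting from $h^{2,2}(X)=2$. The Hodge numbers are otherwise those of $X$, which has $h^{i,j}(X)=0$ for $i\ne j$. Finally, $h^0(-K_{X^+})$ is obtained from Riemann--Roch on the Fano variety, $\chi=1+\frac{1}{24}\left(2c_1^4+\dots\right)$, via the standard formula $h^0=\frac{1}{12}c_1^2(c_1^2+c_2)/\dots$ in dimension $4$, using $c_1^2c_2$.

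The main obstacle is the nef-cone step: pinning down the jumping lines of maximal order and the exact intersection numbers of the flipping curves from \cite{HS81}.
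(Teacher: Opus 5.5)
\paragraph{Gap 1: the flip and the Fano property are assumed, not proved.} The central step is asserted rather than proved, and the way you obtain it is circular. You take $\mathcal{N}_{\mathbf{l}_0/X}\simeq\mathcal{O}(-1)^{\oplus 3}$ from Lemma \ref{LemmaSQMFano}, but that lemma presupposes an SQM to a smooth Fano 4-fold, which is exactly what you are trying to construct.

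You then deduce that $X^+$ is Fano from $\Nef(X^+)=\langle\xi+3H,\xi+2H\rangle$. However, since $D^+\cdot C^+=D\cdot C-s$ with $s:=\length(C\cap\mathbf{l}_0)$, nefness of the transform $D^+$ of a member $D\in|\xi+2H|$ is equivalent to $D\cdot C\ge s$ for every irreducible $C\ne\mathbf{l}_0$. You give no argument for this. Nor do you show that $\mathbf{l}_0$ is the only curve in its ray, or that $-K_X\cdot C\ge s+1$, which Lemma \ref{LemmaSQMFano}(b) makes unavoidable.

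The missing idea is the paper's use of a general $D\in|\xi+2H|$. By \cite{HS81}, $D\simeq\Bl_{\mathcal{C}_1\sqcup\mathcal{C}_2}\mathbb{P}^3$ for two disjoint conics in planes through $L_0$. One computes $\NE(D)=\langle r_0,e_1,e_2\rangle$ and $\mathcal{N}_{D/X}\simeq\mathcal{O}_D(3P-E_1-E_2)$. A case analysis inside $D$, together with Lemma \ref{LemmaLengthInclusions} (and $-K_X=2D+H$ for $C\not\subset D$), then gives both $-K_X\cdot C\ge s+1$ and $D\cdot C\ge s$. Since $\mathbf{l}_0=r_0$ is the complete intersection in $D$ of the two plane transforms, a split normal bundle sequence gives $\mathcal{O}(-1)^{\oplus 3}$. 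Only after this can one blow up $\mathbf{l}_0$, contract the other ruling of $E\simeq\mathbb{P}^1\times\mathbb{P}^2$ to get a smooth $X^+$, and conclude via $-K_{X^+}\cdot C^+=-K_X\cdot C-s$.

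The nef-cone input is also left open. You leave undecided whether the maximal jump is $a=1$ or $a=2$. It is $a=2$, attained only at $L_0$, where $\mathcal{E}|_{L_0}\simeq\mathcal{O}(2)\oplus\mathcal{O}(-3)$ (Proposition \ref{PropositionJumping}). This gives $\xi\cdot\mathbf{l}_0=-3$ and $-K_X\cdot\mathbf{l}_0=-1$. Moreover, nefness of $\xi+3H$ cannot be tested only on curves lying over lines. The paper gets global generation of $\mathcal{E}(3)$ from Castelnuovo--Mumford regularity.

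\paragraph{Gap 2: several geometric inputs are wrong.} The zero scheme $Y$ of the section of $\mathcal{E}(1)$ is never two skew lines. Serre's correspondence with $c_1(\mathcal{E}(1))=1$ and $c_2(\mathcal{E}(1))=2$ forces $\omega_Y\simeq\mathcal{O}_Y(-3)$, i.e.\ $p_a(Y)=-2$. Indeed $Y$ is the double line $Z$ with $I_Z=(x^2,xy,y^2,xf-yg)$. So the ``quadrics through two skew lines'' picture of $\Mov$ and the ``pencil of quadrics'' picture of $\gamma_2$ do not apply.

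In the paper, the two-dimensional fibres of $\gamma_2$ are the transforms of the planes $\mathcal{P}_{\alpha,\beta}\supset L_0$. Each $\Lambda_{\alpha,\beta}\subset F$ is $\Bl_{2pts}\mathbb{P}^2$ (or a blown-up quadric cone) containing $\mathbf{l}_0$ as a $(-1)$-curve. The flip contracts that curve, producing $\mathbb{P}^1\times\mathbb{P}^1$ (or the cone). The singular fibres occur where $(\beta f+\alpha g)|_{L_0}$ is a square, not according to whether $Y$ is reduced. One must also show that $\gamma_2$ is neither small nor of fibre type, and that $F^+$ maps onto a curve rather than a point; the paper uses the conic $\sigma^+(\hat{S}_0\cap E)$, of $D^+$-degree $2$.

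For the cones, restricting to a general line only gives $h^0(S^k\mathcal{E}(m))=0$ for $m<0$, i.e.\ $\Eff\subseteq\langle H,\xi\rangle$, not $\langle H,\xi+H\rangle$. Likewise, the absence of a fixed part in $|\xi+2H|$ only places $\xi+2H$ inside $\Mov$, not on its boundary. In the paper both boundary rays come from $\gamma_2$ being divisorial with exceptional divisor $F^+$, where $F$ is the unique member of $|\xi+H|$.

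\paragraph{Numerical slips.} Your Grothendieck relation has the wrong sign for the convention in which $-K_X=2\xi+5H$. The correct values are $\xi^2H^2=-1$ and $\xi^4=3$, hence $(-K_X)^4=288$; your values give $1392$. Also, Riemann--Roch is a single equation linking $h^0(-K_{X^+})$ and $c_1^2c_2$, so it cannot determine both. The paper supplies an independent input: it computes $\chi(-K_X)=63$ on the $\mathbb{P}^1$-bundle, then uses that $\chi(-K)$ is invariant and $(-K)^4$ increases by $1$ under this flip (Proposition \ref{PropX+}).
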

\begin{center}
	\begin{tikzpicture}[line cap=round, line join=round, scale=0.85]
		
		\definecolor{mygreen}{RGB}{0,150,70}
		\definecolor{myyellow}{RGB}{230,175,0}
		\definecolor{myorange}{RGB}{230,95,35}
		
		\draw[thick] (0,0) -- (9.5,0);
		\draw[thick] (0,0) -- (0,8.5);
		
	
	\fill[myorange, opacity=0.12]
	(0,0) -- (9.4,0)
	arc[start angle=0, end angle=45, radius=9.4]
	-- cycle;
	
	\fill[myyellow, opacity=0.20]
	(0,0) -- (8.9,0)
	arc[start angle=0, end angle=26.565, radius=8.9]
	-- cycle;
	
	\fill[mygreen, opacity=0.22]
	(0,0) -- (8.4,0)
	arc[start angle=0, end angle=18.435, radius=8.4]
	-- cycle;
		
		\draw[very thick, mygreen]
		(8.4,0) arc[start angle=0, end angle=18.435, radius=8.4];
		
		\draw[very thick, myyellow]
		(8.9,0) arc[start angle=0, end angle=26.565, radius=8.9];
		
		\draw[very thick, myorange]
		(9.4,0) arc[start angle=0, end angle=45, radius=9.4];
		
		\draw[thick] (0,0) -- (8.5,8.5);
		\draw[thick] (0,0) -- (9.5,4.75);
		\draw[thick] (0,0) -- (9.5,3.17);
		
		\fill (0,1.4) circle (0.075);
		\fill (1.4,0) circle (0.075);
		
		\fill (1.4,1.4) circle (0.075);
		\fill (2.8,1.4) circle (0.075);
		\fill (4.2,1.4) circle (0.075);
		
		\fill (7.0,2.8) circle (0.075);
		
		\tikzset{
			graph label/.style={ font=\small,
				inner sep=2pt
			}
		}
		
		\node[graph label, left] at (0,1.9) {$[\xi]$};
		\node[graph label] at (1.4,-0.45) {$[H]$};
		\node[graph label] at (1.4,1.9) {$[\xi+H]$};
		\node[graph label, right] at (2.4,1.9) {$[\xi+2H]$};
		\node[graph label, right] at (4.2,1.9) {$[\xi+3H]$};
		\node[graph label, right] at (7.0,3) {$[-K_X]$};
		
		\draw[thick, mygreen] (9.5,7.5) -- (10.15,7.5);
		\node[right] at (10.25,7.5) {$\Nef(X)$};
		
		\draw[thick, myyellow] (9.5,6.5) -- (10.15,6.5);
		\node[right] at (10.25,6.5) {$\Mov(X)$};
		
		\draw[thick, myorange] (9.5,5.5) -- (10.15,5.5);
		\node[right] at (10.25,5.5) {$\Eff(X)$};
		
	\end{tikzpicture}
\end{center}

	\section{Necessary conditions on $\mathcal{E}$ so that $X=\mathbb{P}(\mathcal{E})$ is SQM of a Fano}
	
	This section is devoted to the proof of Part (a) of Theorem \ref{TheoremNonFanoCase}. We begin by stating two lemmas that will be crucial for that goal. 
	
	\begin{lemma}\label{LemmaSQMFano}
		Let $\tilde{Z}$ be a smooth Fano 4-fold and $\phi\colon Z\dashrightarrow \tilde{Z}$ a SQM. We have the following:
		\begin{itemize}
			\item[(a)] $Z$ is smooth and $Z\setminus\dom(\phi)=\ell_1\cup\dots\cup\ell_r$, where $\ell_i$ are pairwise disjoint smooth rational curves such that $\mathcal{N}_{\ell_i/Z}\simeq\mathcal{O}_{\mathbb{P}^1}(-1)^{\oplus 3}$ for all $i\in\{1,\dots r\}$. Note that $-K_Z\cdot\ell_i=-1$. Such curves are called \emph{exceptional lines}.
			
			Moreover $\tilde{Z}\setminus \dom(\phi^{-1})=L_1\cup\dots\cup L_r$ where $L_i$ are pairwise disjoint surfaces such that $L_i\simeq\mathbb{P}^2$ and $\mathcal{N}_{{L_i}/Z}\simeq \mathcal{O}_{\mathbb{P}^2}(-1)^{\oplus 2}$. Such surfaces are called \emph{exceptional planes}.

			In addition, $\phi$ factors as
			\begin{equation*} 
				\begin{tikzcd}
					Z\ar[rr,dashrightarrow,"\phi",bend left]&\hat{Z}\ar[l,"\sigma"]\ar[r,"\sigma^{+}"']&\tilde{Z}
				\end{tikzcd}
			\end{equation*}
			where $\sigma$ is the blow-up of $\ell_1\cup\dots\cup\ell_{r}$ and $\sigma^{+}$ is the blow-up of $L_1\cup\dots\cup L_r$. 
			\item[(b)] Let $C\subset Z$ be a smooth irreducible curve, different from $\ell_1,\dots\ell_r$, and intersecting $\ell_1\cup\dots\cup\ell_r$ in $s\ge 0$ points, counted with multiplicities; then $-K_{Z}\cdot C\ge 1+s$. 
			\item[(c)] If $-K_{Z}\cdot C=1$, then $C\cap(\ell_1\cup\dots\cup\ell_r)=\emptyset$. 
		\end{itemize} 
	\end{lemma}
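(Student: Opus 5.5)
The plan is to reduce everything to the known structure of SQMs between smooth Fano-type 4-folds, which goes back to Kawamata's classification of 4-dimensional small contractions. Since $\tilde Z$ is a smooth Fano 4-fold, it is a Mori dream space. Hence the SQM $\phi^{-1}\colon\tilde Z\dashrightarrow Z$ decomposes as a finite sequence of flips, or more precisely of $D$-flips for a movable divisor $D$ on $\tilde Z$. We run the $D$-MMP on $\tilde Z$, with $D$ the strict transform of an ample divisor of $Z$.

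The first step is the first contraction. Its source $\tilde Z$ is smooth and $-K_{\tilde Z}$ is ample, so the contracted ray $R$ is $K$-negative and small. By Kawamata's theorem (small $K$-negative extremal contractions of smooth 4-folds), the exceptional locus is a disjoint union of planes $L_i\simeq\mathbb P^2$ with normal bundle $\mathcal O(-1)^{\oplus2}$. The flip is obtained by blowing up the $L_i$, which gives exceptional divisors $\mathbb P^2\times\mathbb P^1$, and contracting them in the other direction. The result is smooth, and the flipped curves are lines $\ell_i$ with normal bundle $\mathcal O(-1)^{\oplus3}$. Each such line satisfies $K\cdot\ell_i=1$, since $K_{\tilde Z}\cdot(\text{line in }L_i)=-1$ and the sign reverses.

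The main obstacle is to show that one step suffices, i.e. that after this flip no further flip occurs. The flipped variety $Z'$ is no longer Fano, so Kawamata cannot be applied naively again. I would argue with the anticanonical class: $-K_{Z'}$ is the strict transform of $-K_{\tilde Z}$, hence movable and big. For any curve $C\subset Z'$ not among the $\ell_i$, write $\hat C$ for its transform in the blow-up $\hat Z$. Then
\[
-K_{Z'}\cdot C \;=\; -K_{\tilde Z}\cdot\sigma^+_*\hat C \;+\; s,
\]
where $s$ is the intersection with the exceptional divisors $E_i$. This comes from comparing $K_{\hat Z}=\sigma^{+*}K_{\tilde Z}+E=\sigma'^{*}K_{Z'}+2E$, since codimensions $3$ and $2$ give discrepancies $2$ and $1$. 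Because $\tilde Z$ is Fano, the first term is $\ge1$, and this gives (b) for $Z'$: $-K_{Z'}\cdot C\ge1+s$. Now suppose a further small $D$-flip occurred, with ray $R'$ on $Z'$. Its curves either meet the $\ell_i$, so they are $K$-positive by (b), or they are disjoint from them. In the latter case they live in the locus where $Z'\cong\tilde Z$, and there they would have to be $K$-negative with exceptional planes. I expect the delicate point to be showing that these further flips are independent and commute with the first ones. They have disjoint exceptional loci, because a curve through some $L_i$ other than a line in it would meet $\ell$-data. So all flips can be performed simultaneously as a single step, and the whole $\phi$ is of the stated form. Smoothness of $Z$ and the factorization $\sigma,\sigma^+$ then follow from the explicit local model.

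For (b) on $Z$ itself, we apply the formula above to the total factorization. The normal bundles make $\hat C\cdot E_i$ equal to the intersection multiplicity of $C$ with $\ell_i$, which is where smoothness of $C$ is used. Moreover $\sigma^+(\hat C)$ is a nonzero effective curve on the Fano $\tilde Z$, since $C\ne\ell_i$ and $\hat C$ is not contracted by $\sigma^+$. This gives $-K_Z\cdot C\ge1+s$. Part (c) is then immediate: $-K_Z\cdot C=1$ forces $s=0$.
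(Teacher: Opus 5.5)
Your framework is the right one. $\tilde Z$ is a Mori dream space, $\phi^{-1}$ is a finite sequence of $D$-flips with $D$ movable, and Kawamata's theorem governs each small $K$-negative step. The discrepancy identity $-K_{Z'}\cdot C=-K_{\tilde Z}\cdot\tilde C+E\cdot\hat C$, with $E\cdot\hat C=\length(C\cap\ell)$ for smooth $C$, gives (b) with multiplicities once (a) is known. This is the mechanism behind \cite[Remark 3.6]{Cas13}, which the paper simply cites. However, the step you single out as the main obstacle is not proved: that later flips are disjoint from earlier ones, so that $\phi$ is a single simultaneous standard flip. The reasoning you sketch for it rests on a sign error. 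By (b), a curve of $Z'$ meeting some $\ell_i$ has $-K_{Z'}\cdot C\ge 2$, so it is $K$-negative, not $K$-positive. Hence your dichotomy (curves of $R'$ meet the $\ell_i$ or avoid them) rules nothing out. Likewise, ``a curve through some $L_i$ other than a line in it would meet $\ell$-data'' gives no contradiction, since being $K$-negative is exactly what a flipping curve is allowed to be.

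The missing idea is to use the \emph{exact} anticanonical degree of lines in exceptional planes. First, $R'$ is $K$-negative. It is $D$-negative, whereas every $\ell_i$ is $D$-positive: $\ell_i$ is the flipped curve of a $D$-flip, and later flips away from it do not change $D\cdot\ell_i$. So every curve with class in $R'$ differs from the $\ell_i$ and has $-K_{Z'}$-degree $\ge 1$. Kawamata's theorem then applies on the smooth $Z'$ (being Fano is not needed), so $\Exc(R')$ is a disjoint union of exceptional planes $M_j$. A line $m\subset M_j$ has $-K_{Z'}\cdot m=3-2=1$, so by (b) it cannot meet any $\ell_i$, since that would force degree $\ge 2$. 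As such lines cover $M_j$, we get $M_j\cap\bigcup_i\ell_i=\emptyset$. Hence $M_j\subset Z'\setminus\bigcup_i\ell_i\simeq\tilde Z\setminus\bigcup_i L_i$ is an exceptional plane of $\tilde Z$ disjoint from the $L_i$. The composite $\tilde Z\dashrightarrow Z''$ is then the simultaneous standard flip of $\bigcup_i L_i\cup\bigcup_j M_j$, and (b) holds again on $Z''$ by the same discrepancy computation. Inducting on the number of $D$-flips gives (a), including the factorization through one blow-up and one blow-down. A minor point: $-K_{Z'}\cdot\ell_i=-1$ should be justified by $\deg T_{\ell_i}+\deg\mathcal{N}_{\ell_i/Z'}=2-3$, not by ``the sign reverses''.
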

	\begin{proof}
		Remark 3.6 in \cite{Cas13} proves $(a)$, $(c)$; $(b)$ is proved under the assumption that $s$ is the number of points in the intersection $C\cap (\ell_1\cup\dots\ell_r)$ without counting multiplicities. However, one can check that the proof of Remark 3.6 in \cite{Cas13} does not need any change and is still valid when $s$ accounts for multiplicities.    
	\end{proof}
	
	\begin{definition}
		Let $X$ be a smooth 4-dimensional variety. We define an \emph{exceptional line} in $X$ as a smooth rational curve $\ell$ in $X$ such that $\mathcal{N}_{\ell/X}\simeq\mathcal{O}_{\ell}(-1)^{\oplus 3}$. 
	\end{definition}
	
	\begin{remark}
		By adjunction, if $\ell$ is an \emph{exceptional line} in a smooth 4-fold $X$, then $-K_X\cdot \ell=-1$. Moreover, as a consequence of Lemma \ref{LemmaSQMFano}, the existence of a finite number of disjoint exceptional lines in $X$ is a necessary condition for $X$ to be a non-Fano variety related to a Fano 4-fold by an SQM.
	\end{remark}
	
	\begin{lemma}[\cite{CS24}, Remark 2.5]\label{LemmaRmk2.5}
		Let $Y$ be a smooth quasi-projective variety, $X:=\mathbb{P}_{Y}(\mathcal{E})$ where $\mathcal{E}$ is a rank two vector bundle on $Y$, and $\pi\colon X\to Y$ the $\mathbb{P}^1$-bundle. 
		
		Let $C\subset Y$ be a smooth projective rational curve and set $S:=\pi^{-1}(C)\simeq\mathbb{F}_e$, where $\mathbb{F}_e$ is the Hirzebruch surface and $e\ge 0$. Let $\Gamma^{-}$ and $\Gamma^{+}$ be respectively the negative section and a positive section of $\pi|_S\colon S\to C$ (namely $(\Gamma^{-})^2=-e$ and $(\Gamma^{+})^2=e$ in $S$). Then $\det(\mathcal{E})\cdot C\equiv e$ mod $2$ and 
		\begin{align*}
			-K_X\cdot \Gamma^{-}=-K_Y\cdot C-e,\text{ }-K_X\cdot\Gamma^{+}=-K_Y\cdot C+e
		\end{align*} 
	\end{lemma}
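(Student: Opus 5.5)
The plan is to reduce everything to the geometry of the Hirzebruch surface $S=\pi^{-1}(C)$ and to the relative canonical bundle formula for $\pi$. Since $C\simeq\mathbb{P}^1$ is smooth, Grothendieck's theorem gives $\mathcal{E}|_C\simeq\mathcal{O}_{\mathbb{P}^1}(a)\oplus\mathcal{O}_{\mathbb{P}^1}(b)$ with $a\ge b$. Then $S=\mathbb{P}_C(\mathcal{E}|_C)\simeq\mathbb{F}_e$ with $e=a-b$. Moreover $\det(\mathcal{E})\cdot C=a+b\equiv a-b=e \pmod 2$, which proves the parity statement at once.

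For the intersection numbers, I will use the formula for a $\mathbb{P}^1$-bundle,
\[
-K_X=2\xi+\pi^*\bigl(-K_Y-\det\mathcal{E}\bigr),
\]
where $\xi$ is the tautological divisor of $\mathbb{P}_Y(\mathcal{E})$. This follows from the relative Euler sequence, which gives $K_{X/Y}=-2\xi+\pi^*\det\mathcal{E}$. Restricting to $S$, the class $\xi|_S$ is the tautological class of $\mathbb{P}_C(\mathcal{E}|_C)$. Both $\Gamma^-$ and $\Gamma^+$ are sections, so $\pi^*D\cdot\Gamma^{\pm}=D\cdot C$ for every divisor $D$ on $Y$. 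It therefore remains to compute $\xi\cdot\Gamma^{\pm}$.

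Sections of $\mathbb{P}(\mathcal{E}|_C)\to C$ correspond to rank-one quotients $\mathcal{E}|_C\to\mathcal{L}$, and $\xi\cdot\Gamma=\deg\mathcal{L}$. I use the Grothendieck convention, projectivizing quotients, which is consistent with $\xi_{\mathcal{E}(m)}=\xi+mH$.
\begin{itemize}
\item The negative section $\Gamma^-$ corresponds to the quotient $\mathcal{O}(b)$, so $\xi\cdot\Gamma^-=b$.
\item A positive section $\Gamma^+$, disjoint from $\Gamma^-$, corresponds to the quotient $\mathcal{O}(a)$, so $\xi\cdot\Gamma^+=a$.
\end{itemize}
As a consistency check, $(\Gamma^+-\Gamma^-)$ is numerically $e$ times the fiber class in $S$, matching $a-b=e$.

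Plugging these in gives
\[
-K_X\cdot\Gamma^-=2b-K_Y\cdot C-(a+b)=-K_Y\cdot C-e,
\]
and symmetrically $-K_X\cdot\Gamma^+=-K_Y\cdot C+e$.

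The only delicate point is bookkeeping of conventions. I need to check that the sign in $K_{X/Y}$ matches the projectivization convention, and that the negative section corresponds to the smaller-degree quotient. Both can be verified on $\mathcal{E}=\mathcal{O}\oplus\mathcal{O}(-e)$ over $\mathbb{P}^1$, where $X=\mathbb{F}_e$ and $-K\cdot\Gamma^-=2-e$ is classical. Quasi-projectivity of $Y$ causes no issue, since all intersections take place on the projective surface $S$ and the curve $C$.
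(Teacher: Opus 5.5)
Your proof is correct and complete. The paper does not prove this lemma; it only quotes Remark 2.5 of \cite{CS24}, so there is no internal argument to compare with, and yours is a valid self-contained substitute. It is also consistent with what the paper does elsewhere. Your formula $-K_X=2\xi+\pi^*(-K_Y-\det\mathcal{E})$ specializes to the paper's $-K_X\equiv 2\xi+(4-c_1)H$ on $\mathbb{P}^3$. Your value $\xi\cdot\Gamma^-=b$ is the same computation as $\xi\cdot\mathbf{l}_0=-e+m=n$ in Section 3.

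Two minor points.

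First, you assert rather than derive that $\Gamma^+$ is disjoint from $\Gamma^-$. This fails when $e=0$ if one takes $\Gamma^+=\Gamma^-$, and you do not need it. Any section satisfies $\Gamma\equiv\Gamma^-+k\mathbf{f}$ on $S$, so $(\Gamma^+)^2=-e+2k=e$ forces $k=e$. Hence $\xi\cdot\Gamma^+=b+e=a$. Your ``consistency check'' is really the argument.

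Second, you can bypass projectivization conventions entirely via adjunction. Since $\pi$ is smooth, $\mathcal{N}_{S/X}\simeq(\pi|_S)^*\mathcal{N}_{C/Y}$, whose determinant has degree $-K_Y\cdot C-2$ on any section. Using $-K_{\mathbb{F}_e}\cdot\Gamma^{\pm}=2\pm e$, this gives $-K_X\cdot\Gamma^{\pm}=-K_S\cdot\Gamma^{\pm}+(-K_Y\cdot C-2)=-K_Y\cdot C\pm e$. The parity statement still comes from the splitting $a+b\equiv a-b$, as in your proof.
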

	
	Consider the following set of assumptions:
	\begin{itemize}
		\item[($\blacktriangle$)] $\mathcal{E}$ is a rank two vector bundle on $\mathbb{P}^3$ such that $X=\mathbb{P}(\mathcal{E})$ is related to a smooth Fano 4-fold by a SQM, and $X$ is not Fano.
	\end{itemize}
	
	\begin{remark}
		Notice that under the assumption ($\blacktriangle$), $X$ is a Mori Dream Space. In particular, its Mori cone is rational polyhedral. 
	\end{remark}
	
	\begin{lemma}\label{LemmaFe}
		Assume ($\blacktriangle$). Then $c_1(\mathcal{E})$ is odd and for every exceptional line $\mathbf{l}\subset X$, its image $\pi(\mathbf{l})$ is a smooth rational curve in $\mathbb{P}^3$.
		
		 Moreover, let $C$ be a smooth rational curve in $\mathbb{P}^3$ with degree $d$, and $S:=\pi^{-1}(C)\subset X$. Then $S\simeq \mathbb{F}_e$ where $e$ is a natural number such that $e\equiv d$ mod $2$, and the following holds:
		\begin{itemize}
			\item If $S$ contains an exceptional line, then  $e=4d+1\ge 5$, and $d$ is odd. Moreover, the exceptional line must coincide with the negative section $\Gamma^{-}$ of $S$. 
			\item Otherwise, $e\le 4d-1$
		\end{itemize} 
	\end{lemma}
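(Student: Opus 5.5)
The plan is to combine Lemma \ref{LemmaRmk2.5} with Lemma \ref{LemmaSQMFano}. Put $Y=\mathbb{P}^3$, so $-K_Y\cdot C=4d$ for a rational curve $C$ of degree $d$.

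\emph{Setup.} Since $X$ is not Fano but is an SQM of a smooth Fano 4-fold, Lemma \ref{LemmaSQMFano}(a) gives $X\setminus\dom(\phi)\neq\emptyset$. Hence $X$ contains at least one exceptional line $\mathbf{l}$, and $-K_X\cdot\mathbf{l}=-1$.

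\emph{Fibers and image of $\mathbf{l}$.} A fiber $f$ of $\pi$ satisfies $-K_X\cdot f=2$, so $\mathbf{l}$ is not a fiber. Therefore $\pi|_{\mathbf{l}}\colon\mathbf{l}\to\pi(\mathbf{l})$ is a finite map onto a curve. I would show that $\pi(\mathbf{l})$ is smooth and that $\pi|_{\mathbf{l}}$ is an isomorphism, in two steps.
\begin{itemize}
\item[(i)] Suppose $\pi|_{\mathbf{l}}$ has degree $k\ge 2$ onto its image, or is not an isomorphism. Then some fiber $f$ meets $\mathbf{l}$ in at least two points counted with multiplicity. Applying Lemma \ref{LemmaSQMFano}(b) to $C=f$ gives $2=-K_X\cdot f\ge 1+s\ge 3$, a contradiction. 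So every fiber meets $\mathbf{l}$ with total multiplicity at most $1$.
\item[(ii)] By (i), $\pi|_{\mathbf{l}}$ is birational onto its image and injective. Since $\mathbf{l}$ is transverse to the fibers, the differential is also injective. Hence $\pi|_{\mathbf{l}}$ is a closed embedding and $\pi(\mathbf{l})\simeq\mathbb{P}^1$ is smooth.
\end{itemize}

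\emph{Parity of $c_1$.} Take $C$ smooth rational of degree $d$ and $S=\pi^{-1}(C)\simeq\mathbb{F}_e$. Lemma \ref{LemmaRmk2.5} gives $e\equiv c_1d \pmod 2$. Apply it to $C=\pi(\mathbf{l})$, where $\mathbf{l}\subset S$ is a section of $S\to C$. Every section $\sigma$ of $\mathbb{F}_e$ satisfies $\sigma\equiv\Gamma^-+af$ with $a\ge 0$. Then
\[-K_X\cdot\sigma=4d-e+2a,\]
and this equals $-1$ for $\sigma=\mathbf{l}$. So $e=4d+1+2a$ is odd, which forces $c_1d$ odd; in particular $c_1$ is odd. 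Consequently $e\equiv d\pmod 2$ for every smooth rational $C$.

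\emph{The case where $S$ contains an exceptional line.} Suppose $\mathbf{l}\subset S$, with $\mathbf{l}\equiv\Gamma^-+af$. I want to show $a=0$, so that $\mathbf{l}=\Gamma^-$ and $e=4d+1$. If $a>0$, then $\mathbf{l}\neq\Gamma^-$. Two cases arise.
\begin{itemize}
\item If $\Gamma^-$ is not among the exceptional lines, then $\Gamma^-\cdot\mathbf{l}=-e+a$ in $S$. I would instead use the positive section $\Gamma^+\equiv\Gamma^-+ef$, or more generally a section $\sigma\equiv\Gamma^-+bf$ with $b\ge e$ chosen to avoid the other exceptional lines. Such a $\sigma$ meets $\mathbf{l}$ in $\sigma\cdot\mathbf{l}=b+a-e$ points. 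Lemma \ref{LemmaSQMFano}(b) then gives $4d-e+2b\ge 1+(b+a-e)$, i.e. $b\ge a+1-4d$.
\item This inequality is not contradictory for large $b$, so it does not settle the matter. A better route is to compare normal bundles. We have $\mathcal N_{\mathbf{l}/S}=\mathcal O(\mathbf{l}^2)$ with $\mathbf{l}^2=-e+2a$, and $\mathcal N_{\mathbf{l}/S}$ is a subbundle of $\mathcal N_{\mathbf{l}/X}\simeq\mathcal O(-1)^3$. This forces $-e+2a\le -1$, i.e. $e\ge 2a+1$. Combined with $e=4d+1+2a$ this is automatic, so it also gives nothing new.
\end{itemize}
The decisive input should instead be Lemma \ref{LemmaSQMFano}(b) applied to $\Gamma^-$. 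Since $a>0$, $\Gamma^-$ is an irreducible curve with $-K_X\cdot\Gamma^-=4d-e=-1-2a<-1$. Every exceptional line has anticanonical degree $-1$, so $\Gamma^-$ is not one. Lemma \ref{LemmaSQMFano}(b) then requires $-K_X\cdot\Gamma^-\ge 1$, a contradiction. Hence $a=0$, so $\mathbf{l}=\Gamma^-$ and $e=4d+1$. Since $e\equiv d\pmod 2$, $d$ is odd, hence $d\ge 1$ and $e\ge 5$.

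\emph{The case where $S$ contains no exceptional line.} Then $\Gamma^-$ is not exceptional, and Lemma \ref{LemmaSQMFano}(b) gives $4d-e=-K_X\cdot\Gamma^-\ge 1$, i.e. $e\le 4d-1$.

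\emph{Main obstacle.} The delicate step is (i)–(ii): ruling out that $\pi|_{\mathbf{l}}$ fails to be an embedding via the multiplicity count in Lemma \ref{LemmaSQMFano}(b). This needs care about tangency of $\mathbf{l}$ to a fiber, which is the reason multiplicities are counted there.
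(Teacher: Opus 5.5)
Your proof is correct and uses the same ingredients as the paper's proof. You apply Lemma~\ref{LemmaSQMFano}(b) to the fibers of $\pi$ to show that $\pi|_{\mathbf{l}}$ is an embedding, use Lemma~\ref{LemmaRmk2.5} to get $-K_X\cdot\Gamma^-=4d-e$, and conclude from the fact that $\Gamma^-$ cannot have anticanonical degree below $-1$. The only difference, and a harmless one, is that you use the fact that $\mathbf{l}$ is a section to write $\mathbf{l}\equiv\Gamma^-+af$ from the outset. This spares you the paper's appeal to Hartshorne's Proposition V.2.21 to exclude $\mathbf{l}\sim\alpha\Gamma^-+\beta\mathbf{f}$ with $\alpha\ge 2$. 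The two abandoned attempts in the middle (positive sections and normal bundles) can simply be deleted.
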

	\begin{proof}
		By Lemma \ref{LemmaRmk2.5}, $S\simeq \mathbb{F}_e$ with $\det(\mathcal{E})\cdot C\equiv e$ mod 2. Let $\Gamma^{+}$ and $\Gamma^{-}$ denote respectively a positive section and the negative section of $\mathbb{F}_e$, namely $(\Gamma^{+})^2=e$, $(\Gamma^{-})^2=-e$. We have $-K_{X}\cdot \Gamma^{+}=-K_{\mathbb{P}^3}\cdot C+e$ and $-K_{X}\cdot \Gamma^{-}=-K_{\mathbb{P}^3}\cdot C-e$.  
		
		If $S$ contains an exceptional line $\mathbf{l}$, let $\alpha,\beta\in\mathbb{Z}_{\ge 0}$ be coefficients such that $\mathbf{l}\sim \alpha \Gamma^{-}+\beta \mathbf{f}$, where $\mathbf{f}$ is a fiber of the $\mathbb{P}^1$-bundle $S\to \mathbb{P}^1$. Recall that, by definition, we have $-K_X\cdot \mathbf{l} =-1$. It follows that $\alpha\ne 0$, because $-K_X\cdot \beta\mathbf{f}=2\beta\ge 0$. Suppose, by contradiction, that $\mathbf{l}\ne\Gamma^{-}$. By Proposition 2.21 \cite{Har77}, $\alpha=1$, $\beta\ge 1$, or $\alpha\ge 2$, $\beta\ge \alpha e$.  Observe that $-K_X\cdot \mathbf{l} = (4d-e)\alpha+2\beta=4d\alpha+(2\beta-\alpha e)$. If $\alpha\ge 2$, it follows that $-K_X\cdot \mathbf{l}\ge 4d\alpha>0$, a contradiction. Therefore $\alpha=1$ and $\beta\ge 1$. However, in that case we would have $-K_X\cdot \Gamma^{-}=4d-e<4d-e+2\beta=-K_X\cdot \mathbf{l}=-1$, a contradiction, because there cannot be any irreducible curves in $X$ with anticanonical degree less than $-1$ (see Lemma \ref{LemmaSQMFano}). Therefore, $\mathbf{l}$ must coincide with $\Gamma^{-}$ in $S$, and we have $-1=-K_{X}\cdot\Gamma^{-}=4d-e$, from which it follows $e=4d+1$. In particular $e$ is odd, and so must be the intersection number $\det(\mathcal{E})\cdot C$, from which it follows that both $c_1(\mathcal{E})$ and $d$ must also be odd.
		
		Observe that by assumption and by Lemma \ref{LemmaSQMFano} (a), an exceptional line $\mathbf{l}$ in $X$ does exist. Moreover, if $f$ is a fiber of $\pi\colon X\to \mathbb{P}^3$, then by Lemma \ref{LemmaSQMFano} (b), $f$ can meet $\mathbf{l}$ in a number of points $s$ (counted with multiplicities) that satisfies $2=-K_X\cdot f\ge1+s$, so that $s\in\{0,1\}$. This implies that the degree of $\pi|_{\mathbf{l}}\colon\mathbf{l}\to \pi(\mathbf{l})$ is one and at every point $p\in \mathbf{l}$, the differential $d(\pi|_{\mathbf{l}})_p$ is nonzero, so that in particular $\pi(\mathbf{l})$ is a smooth rational curve. Therefore, the arguments used in the previous paragraph can be applied to $S=\pi^{-1}(\pi(\mathbf{l}))$, and we can conclude that $c_1(\mathcal{E})$ is indeed odd in this setting.
		
		On the other hand, if $S$ does not contain any exceptional line, then $1\le -K_{X}\cdot \Gamma^{-}=-K_{\mathbb{P}^3}\cdot C-e=4d-e$, hence $e\le 4d-1$. Moreover, since we just proved that $c_1(\mathcal{E})$ is odd, and $\det(\mathcal{E})\cdot C\equiv e$ mod $2$, we have that $d\equiv e$ mod $2$.

	\end{proof}

	Under the assumptions ($\blacktriangle$), denote by $H$ the pullback in $X$ of a hyperplane in $\mathbb{P}^3$, and denote by $\mathbf{f}$ a fiber of $\pi\colon X\to \mathbb{P}^3$. As we proved in Lemma \ref{LemmaFe}, each exceptional line in $X$ is \emph{horizontal}, meaning that its image in $\mathbb{P}^3$ through $\pi$ is a smooth irreducible curve. Among all exceptional lines $\mathbf{l}$ in $X$, fix one that realizes the minimal value of $\deg(\pi(\mathbf{l}))$. We denote it by $\mathbf{l}_0$, and we denote by $C_0$ its image $\pi(\mathbf{l}_0)$. Since $-K_{X}\cdot \mathbf{l}_0=-1\ne 2$, $\mathbf{l}_0$ is not a fiber of $\pi$ and $C_0$ is a rational curve of degree $d\ge 1$.
	
	We then have that the space $N^1(X)$ is generated by $H$ and $-K_{X}$, and $N_1(X)$ is generated by $[\mathbf{l}_0]$ and $[\mathbf{f}]$, with the following intersection table:
	
	\begin{center}
		\begin{tabular}{|c|c|c|}
			\hline
			$\cdot$ & $-K_{X}$ & $H$ \\
			\hline
			$\mathbf{l}_0$ & $-1$ & $d$\\
			\hline
			$\mathbf{f}$ & $2$ & $0$\\
			\hline
		\end{tabular}
	\end{center}
	
	\begin{lemma}
		Under the assumptions $(\blacktriangle)$, we have
		\[\NE(X)=\mathbb{R}_{\ge 0}[\mathbf{l}_0]+\mathbb{R}_{\ge 0}[\mathbf{f}].\]
	\end{lemma}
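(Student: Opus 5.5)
We have $\rho_X=2$, so $\NE(X)$ is a two-dimensional closed cone with two extremal rays. Since $X$ is a Mori Dream Space, the cone is rational polyhedral and each ray is spanned by the class of an irreducible curve. It therefore suffices to show that $[\mathbf{f}]$ and $[\mathbf{l}_0]$ each span an extremal ray.

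The ray through $[\mathbf{f}]$ is extremal because it is the ray contracted by $\pi$. Indeed, $H$ is nef, $H\cdot\mathbf{f}=0$, and $H\cdot C>0$ for every curve $C$ not contracted by $\pi$. So $H^\perp\cap\NE(X)=\mathbb{R}_{\ge0}[\mathbf{f}]$ is a face, and $\NE(X)$ lies in the half-plane $H\ge 0$. Writing a class as $\gamma\equiv a[\mathbf{l}_0]+b[\mathbf{f}]$, we get $H\cdot\gamma=ad$, so every effective class has $a\ge 0$.

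It remains to show $b\ge 0$ for every irreducible curve $C$. Let $C$ be irreducible, not a fiber, with class $a[\mathbf{l}_0]+b[\mathbf{f}]$, where $a=H\cdot C/d>0$. Then $-K_X\cdot C=-a+2b$, so it is enough to prove
\[
-K_X\cdot C\ \ge\ -\frac{H\cdot C}{d}.
\]
If $C$ is an exceptional line $\mathbf{l}$, then $-K_X\cdot C=-1$ and $H\cdot C=\deg\pi(\mathbf{l})\ge d$ by the minimality in the choice of $\mathbf{l}_0$. The inequality follows.

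If $C$ is not an exceptional line, Lemma \ref{LemmaSQMFano}(b) gives $-K_X\cdot C\ge 1>0\ge -H\cdot C/d$ when $C$ is smooth. When $C$ is singular, I would argue through the SQM instead. Let $\phi\colon X\dashrightarrow\tilde{X}$ with $\tilde{X}$ Fano, and use the factorization $X\leftarrow\hat{X}\to\tilde{X}$ of Lemma \ref{LemmaSQMFano}(a). Take the strict transform $\hat{C}$ of $C$ in $\hat{X}$, with $E_i$ the exceptional divisors over the $\ell_i$. We have $\sigma^*(-K_X)=-K_{\hat X}+2\sum E_i$, and $-K_{\hat X}=(\sigma^+)^*(-K_{\tilde X})-\sum E_i$. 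This gives
\[
-K_X\cdot C=(-K_{\tilde X})\cdot\sigma^+_*\hat{C}+\sum E_i\cdot\hat{C}.
\]
If $C$ is not contained in an exceptional line, then $\sigma^+_*\hat{C}$ is a nonzero effective curve, or $\hat{C}$ lies in a fiber of $E_i\to L_i$ and then $E_i\cdot\hat{C}\ge0$ anyway. Hence $-K_X\cdot C\ge 0$, which is all we need. This is the same mechanism as in \cite[Remark 3.6]{Cas13}, and I expect the main delicacy of the proof to lie here, namely in checking that no irreducible curve other than the exceptional lines has negative anticanonical degree.

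Having shown $a,b\ge 0$ for every irreducible curve, we get $\NE(X)\subseteq\mathbb{R}_{\ge0}[\mathbf{l}_0]+\mathbb{R}_{\ge0}[\mathbf{f}]$. The reverse inclusion holds because both generators are classes of effective curves, and this proves the equality.
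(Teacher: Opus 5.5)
Your proof is correct and follows the paper's argument: nefness of $H$ gives $a\ge 0$, the bound $-K_X\cdot C\ge 1$ for curves that are not exceptional lines gives $b\ge 0$ (as you note, $-K_X\cdot C\ge 0$ already suffices), and minimality of $\deg\pi(\mathbf{l}_0)$ handles the exceptional lines. Your extra step for singular curves, via $\sigma^*(-K_X)=(\sigma^+)^*(-K_{\tilde X})+\sum E_i$, makes explicit what the paper uses directly when it applies Lemma~\ref{LemmaSQMFano} to arbitrary irreducible curves. One small correction there: the branch ``$\hat C$ lies in a fiber of $E_i\to L_i$'' cannot occur for the strict transform of a curve different from $\ell_i$, and if it did, $E_i\cdot\hat C$ would equal $-1$, not be $\ge 0$.
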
	
	\begin{proof}
		Let $Y$ be any irreducible curve in $X$. Clearly $N_1(X)=\mathbb{R}[\mathbf{l}_0]+\mathbb{R}[\mathbf{f}]$, so there exist $\alpha,\beta\in\mathbb{R}$ such that $Y\equiv\alpha\mathbf{l}_0+\beta\mathbf{f}$. Now, $H$ is nef, so $0\le H\cdot Y=\alpha d$, which implies $\alpha\ge 0$. If $Y$ is not an exceptional line, then $1\le-K_X\cdot Y=-\alpha+2\beta$, which implies $\beta\ge \frac{1}{2}(1+\alpha)\ge 0$, so $[Y]\in\mathbb{R}_{\ge 0}[\mathbf{l}_0]+\mathbb{R}_{\ge 0}[\mathbf{f}]$. 
		
		On the other hand, if $Y$ coincides with an exceptional line $\mathbf{l}$, then $-1=-K_X\cdot Y=-\alpha+2\beta$, which means $\beta=\frac{1}{2}(\alpha-1)$. 
		Recall that $\deg(\pi(Y))=H\cdot Y=\alpha d\ge 1$. Now, if $0< \alpha<1$, then $\deg(\pi(Y))=\alpha d<d=\deg(C_0)$, a contradiction by the minimality of $C_0$. Hence $\alpha\ge 1$ and $\beta=\frac{1}{2}(\alpha-1)\ge 0$, so we can conclude that $[Y]\in \mathbb{R}_{\ge 0}[\mathbf{l}_0]+\mathbb{R}_{\ge 0}[\mathbf{f}]$.
	\end{proof} 
	
	Without loss of generality (by Lemma \ref{LemmaFe}), in this section we can and will always assume $c_1(\mathcal{E})=-1$. Again, denote $C_0=\pi(\mathbf{l}_0)$. Denote by $S_0$ the preimage $\pi^{-1}(C_0)$. We know that there are $m,n\in \mathbb{Z}$ such that $\mathcal{E}|_{C_0}\simeq \mathcal{O}_{C_0}(m)\oplus\mathcal{O}_{C_0}(n)$, and $S_0\simeq \mathbb{P}(\mathcal{E}|_{C_0})$. Suppose $m\ge n$ and call $e:=m-n$. By Lemma \ref{LemmaFe}, $\mathbf{l}_0$ coincides with $\Gamma^{-}$ in $S_0\simeq\mathbb{F}_e$, and $e=4d+1$. Moreover $m+n=\det(\mathcal{E})\cdot C_0=-d\equiv e$ mod $2$, so that $e$ and $d$ are both odd. We also get
	\begin{align*}
		\begin{cases}
			m-n=4d+1\\
			m+n=-d
		\end{cases}
	\end{align*}
	so that $m=\frac{3d+1}{2}$, $n=-\frac{5d+1}{2}$. 
	
	Now, consider $\xi$ the tautological divisor of $\mathbb{P}(\mathcal{E})$. Let $\xi_{S_0}$ be the tautological divisor of $\mathbb{P}(\mathcal{E}|_{C_0})$ and $\mathbf{f}$ be the class of a fiber of $S_0\to C_0$. We have
	\begin{equation*}
		\xi|_{S_0}=\xi_{\mathcal{E}|_{C_0}}=\xi_{\mathcal{E}(-m)|_{C_0}}+m\mathbf{f}=\Gamma^{-}+m\mathbf{f}=\mathbf{l}_0+m\mathbf{f}. 
	\end{equation*}
	Then $\xi\cdot \mathbf{l}_0=\xi_{S_0}\cdot \mathbf{l}_0= {\mathbf{l}_0}^2+m\mathbf{l}_0\cdot \mathbf{f}= -e+m=n=-\frac{1}{2}(5d+1)$. We then have the following intersection table for $X$:
	\begin{center}
		\begin{tabular}{|c|c|c|}
			\hline
			$\cdot$ & $\xi$ & $H$ \\
			\hline
			$\mathbf{l}_0$ & $-\frac{1}{2}(5d+1)$ & $d$\\
			\hline
			$\mathbf{f}$ & $1$ & $0$\\
			\hline
		\end{tabular}
	\end{center}
	We are ready to prove the following lemma.
	
	\begin{lemma}\label{LemmaBoundc2}
		Assume ($\blacktriangle$). If $c_1(\mathcal{E})=-1$, then $c_2(\mathcal{E})$ is an even integer that is at most equal to 6. 
	\end{lemma}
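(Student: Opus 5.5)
The parity statement and the bound are proved separately: parity follows from a general Chern class identity, and the bound from the positivity coming from the Fano model $X^+$.

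\textbf{Parity.} For any rank two bundle on $\mathbb{P}^3$, Riemann--Roch integrality gives the Schwarzenberger condition $c_1c_2\equiv 0 \pmod 2$. Since $c_1=-1$, this forces $c_2$ to be even. Alternatively, one can check directly that $\chi(\mathcal{E})\in\mathbb{Z}$ only when $c_2$ is even.

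\textbf{Intersection ring of $X$.} Next I would write down the intersection ring. From the relation $\xi^2+c_1\xi H+c_2H^2=0$ we get $\xi^2=\xi H-c_2H^2$, and hence
\[
\xi H^3=1,\quad \xi^2H^2=1,\quad \xi^3H=1-c_2,\quad \xi^4=1-2c_2 .
\]
We also have $-K_X=2\xi+5H$, which is consistent with the intersection table: $-K_X\cdot\mathbf{l}_0=-(5d+1)+5d=-1$. Expanding gives
\[
(-K_X)^4=1776-192c_2 .
\]
The Mori cone computed in the previous lemma gives $\Nef(X)=\mathbb{R}_{\ge0}[H]+\mathbb{R}_{\ge0}[2d\,\xi+(5d+1)H]$. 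So $D_d:=2d\xi+(5d+1)H$ is nef, and $D_d^4\ge 0$ is already a bound on $c_2$; for $d=1$ it reads $2800-224c_2\ge0$. This is too weak, so the real input must come from $X^+$.

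\textbf{Passing to $X^+$.} I would use the factorization $X\xleftarrow{\sigma}\hat X\xrightarrow{\sigma^+}X^+$ of Lemma \ref{LemmaSQMFano}(a). Let $E_i\simeq\mathbb{P}^1\times\mathbb{P}^2$ be the exceptional divisors over the exceptional lines $\ell_i$. Comparing the two discrepancy formulas
\[
K_{\hat X}=\sigma^*K_X+2\textstyle\sum E_i,\qquad K_{\hat X}=\sigma^{+*}K_{X^+}+\textstyle\sum E_i
\]
gives
\[
\sigma^{+*}(-K_{X^+})=\sigma^*(-K_X)+\textstyle\sum E_i .
\]
Raising this to the fourth power, using $E_i|_{E_i}=\mathcal{O}(-1,-1)$ and $\sigma^*(-K_X)|_{E_i}=\mathcal{O}(-1,0)$, expresses $(-K_{X^+})^4$ as $1776-192c_2$ plus an explicit multiple of $r$, the number of exceptional lines. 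The same comparison applies to $\sigma^{+*}$ of the strict transform of the nef class $D_d$; since $D_d\cdot \mathbf{l}_0=0$, it is trivial on the $\mathbb{P}^1$ direction of each $E_i$.

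\textbf{The bound, and the expected obstacle.} The main obstacle is to control $r$, or to avoid it altogether. My first approach would be to pair $-K_{X^+}$, which is ample, against suitable classes on $X^+$. Useful candidates are the strict transform of $H$ and the class $D_d$, which stays nef on $X$ and becomes the boundary of the movable cone on the other side. The inequalities
\[
(-K_{X^+})^3\cdot H^+>0,\qquad (-K_{X^+})^2\cdot (H^+)^2>0,\qquad (-K_{X^+})^2\cdot H^+\cdot D^+\ge0
\]
involve $r$ with different signs, since the correction terms coming from $E_i$ depend on $H\cdot\ell_i=d$. Eliminating $r$ among them should produce an inequality linear in $c_2$. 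I expect that, combined with $d$ odd, $e=4d+1$ (Lemma \ref{LemmaFe}), and the splitting $\mathcal{E}|_{C_0}=\mathcal{O}(\tfrac{3d+1}{2})\oplus\mathcal{O}(-\tfrac{5d+1}{2})$, this yields $c_2\le 6$.

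\textbf{Fallback.} If this elimination fails, I would instead use invariance of $h^0$ under the SQM: $h^0(X,-K_X)=h^0(X^+,-K_{X^+})$, and by Kodaira vanishing the right-hand side equals $\chi(-K_{X^+})$, which Riemann--Roch expresses in terms of $(-K_{X^+})^4$ and $c_2(X^+)\cdot(-K_{X^+})^2$. The left-hand side equals $h^0(S^2\mathcal{E}(5))$, which I would then try to bound above using stability, that is $H^0(\mathcal{E})=0$.
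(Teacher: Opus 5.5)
Your parity argument is fine and is the same as the paper's. The bound $c_2\le 6$, however, is not proved. The route through $X^+$ is only sketched (``should produce'', ``I expect''). You abandoned the argument that actually works because of a sign error in the intersection ring.

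In the convention used here, $\pi_*\mathcal{O}_X(\xi)=\mathcal{E}$, so that $-K_X=2\xi+(4-c_1)H$ and $\xi\cdot\mathbf{l}_0=n$. The Grothendieck relation is then $\xi^2-c_1\xi H+c_2H^2=0$, not $\xi^2+c_1\xi H+c_2H^2=0$. With $c_1=-1$ this gives $\xi^2H^2=-1$, $\xi^3H=1-c_2$ and $\xi^4=2c_2-1$. Hence $(-K_X)^4=544-128c_2$, not $1776-192c_2$. A quick sanity check would have caught this. For $\mathcal{E}=\mathcal{O}\oplus\mathcal{O}(-1)$ we have $X\simeq\Bl_{pt}\mathbb{P}^4$, with $(-K_X)^4=625-81=544$. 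For $c_2=2$ the formula gives $288$, the value found later in the paper.

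With the correct numbers, the inequality you wrote down and discarded is exactly the paper's proof. Set $\xi'=\xi+\frac12H$, so that $\xi'^2=\frac{1-4c_2}{4}H^2$ and $D_d=2d\xi'+(4d+1)H$. Then
\[
D_d^4=8d(4d+1)\bigl[(4d+1)^2+d^2(1-4c_2)\bigr].
\]
So $D_d^4\ge 0$ yields $c_2\le \frac{17d^2+8d+1}{4d^2}\le\frac{13}{2}$, i.e.\ $c_2\le 6$; for $d=1$ it reads $1040-160c_2\ge 0$. The paper uses the strict inequality $T^4>0$, since $T=D_d$ is nef and big (it is the pullback of an ample class under the small contraction of $\mathbf{l}_0$). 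Nefness alone already suffices, because $\frac{13}{2}\notin\mathbb{Z}$. No passage to $X^+$ is needed.

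Your comparison formula on $\hat X$ also has the wrong sign. The correct one is $\sigma^{+*}(-K_{X^+})=\sigma^*(-K_X)-\sum E_i$. Indeed, its restriction to $E_i$, namely $\mathcal{O}(-1,0)\otimes\mathcal{O}(1,1)=\mathcal{O}(0,1)$, must be trivial on the $\mathbb{P}^1$ factor. So that detour would have needed repair too.
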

	
	\begin{proof}
		Consider $T$ a nef and big divisor such that $T\cdot \mathbf{l}_0=0$ (the pullback of the generator of the Picard group of the target of the small contraction that contracts $\mathbf{l}_0$). Write it as a linear combination of $\xi$ and $H$: $T\equiv\alpha \xi+\beta H$. Then 
		\begin{align*}
			0=T\cdot \mathbf{l}_0=-\frac{1}{2}(5d+1)\alpha+d\beta  
		\end{align*} 
		thus we can take $\alpha=2d$ and $\beta=5d+1$ (we take $\alpha>0$ because $T\cdot \mathbf{f}>0$).
		
		As a consequence of the well known relation $\xi^2-\pi^{*}c_1\xi+\pi^{*}c_2=0$, we deduce that 
		\begin{align*}
			H^4=0;\quad H^3\xi=1;\quad H^2\xi^2=c_1=-1;\quad H\xi^3=-c_2+c_1^2=-c_2+1;\quad \xi^4=-2c_2c_1+c_1^3=2c_2-1
		\end{align*} 
		where $c_1$ and $c_2$ are the Chern classes of $\mathcal{E}$. 
		We can then compute $T^4$:
		\begin{align*}
			T^4&= (2d\xi+(5d+1)H)^4\\&= 16d^4\xi^4+32d^3(5d+1)\xi^3H+24d^2(5d+1)^2\xi^2H^2+8d(5d+1)^3\xi\cdot H^3+ (5d+1)^4H^4\\
			&= 16d^4(2c_2-1)+32(5d^4+d^3)(-c_2+1)-24d^2(5d+1)^2+8d(5d+1)^3.
		\end{align*}
		
		Now, $T^4>0$ yields
		\begin{align*}
			-32(4 d^4+d^3)c_2> -544d^4-392d^3-96d^2-8d\\
		\end{align*}
		so that 
		\begin{align*}
			c_2<\frac{68d^3+49d^2+12d+1}{4(4d^3+d^2)}=\frac{(17d^2+8d+1)}{4d^2}.
			\\\
		\end{align*}
		It can be proved that $\phi(d):=\frac{(17d^2+8d+1)}{4d^2}$ is a strictly decreasing function if $d>0$. Since $d$ is a positive integer, we get $c_2<\phi(1)=\frac{13}{2}$. Since $c_2$ is also an integer, we get $c_2\le 6$. Observe that if $c_2=6$, then $d=1$, because $\phi(d)\le 5$ for all $d\ge 2$. 
		
		Recall the relation $c_1c_2\equiv 0$ mod $2$ for the Chern classes of a rank two vector bundle on $\mathbb{P}^n$ (see \cite[Corollary 2.2]{Har78}). In our setting, we deduce that $c_2$ must be even.  
	\end{proof}
	
	\begin{remark}\label{RemarkNef}
		Observe that $-K_{X}\equiv2\xi+(4-c_1(\mathcal{E}))H=2\xi_{\mathcal{E}(3)}-H$. 
		Recall that if $\mathbf{l}$ is an exceptional line in $X$, $-K_{X}\cdot \mathbf{l}=-1$, $H\cdot \mathbf{l}=\deg(\pi(\mathbf{l}))\ge 1$, and for all irreducible curves $C$ that are not exceptional lines we have $-K_{X}\cdot C\ge 1$, $H\cdot C\ge 0$. It follows that $\xi_{\mathcal{E}(3)}=\frac{1}{2}(-K_{X}+H)$ is  nef. Notice that it is ample if $d=\deg(\pi(\mathbf{l}_0))>1$.  
	\end{remark}
	
	We are now ready for the proof of the first part of the main theorem.
	
	\begin{proof}[Proof of Theorem \ref{TheoremNonFanoCase} (a)]
		First notice that in this setting $\mathcal{E}$ cannot be decomposable. Indeed, up to tensoring with a line bundle, $\mathcal{E}$ would be isomorphic to $\mathcal{O}\oplus\mathcal{O}(q)$ for some $q\in\mathbb{N}$. If $q=0$, then $X\simeq \mathbb{P}^3\times\mathbb{P}^1$ would be already Fano. If $q>0$, then the second extremal contraction of $X$ would be divisorial (its exceptional locus would coincide with the section associated with the surjection $\mathcal{O}\oplus\mathcal{O}(q)\twoheadrightarrow\mathcal{O}$). This would imply that $X$ has no small extremal contractions, against our assumptions.  
		
		Moreover, recall that in this setting $\mathcal{E}(3)$ must be nef by Remark \ref{RemarkNef} and $c_2$ is an even integer at most equal to $6$, by Lemma \ref{LemmaBoundc2}.
		
		Assume $H^0(\mathcal{E}(-1))\ne 0$. Let $s$ be a nonzero section of $\mathcal{E}(-1)$. We claim that $s$ does not vanish anywhere. Indeed, if $Z:=\{s=0\}\ne \emptyset$, then for a line $L$ meeting $Z$ in a finite number of points, we have 
		\begin{align*}
			\mathcal{E}(-1)|_L=\mathcal{O}_L(m)\oplus\mathcal{O}_L(n)
		\end{align*} 
		with $m\ge 1$, $m+n=\deg(\mathcal{E}(-1))=-3$. Here, $m$ coincides with the number of points in $L\cap Z$, counted with multiplicities. Indeed, the intersection points coincide with the zero locus of $s|_L\in H^0(\mathcal{E}(-1)|_L)=H^0(\mathcal{O}_L(m))\oplus H^0(\mathcal{O}_L(n))=H^0(\mathcal{O}_L(m))$. 
		
		By the nefness of $\mathcal{E}(3)$, the only acceptable case is $m=1,n=-4$. This means that the degree of $Z$ is $1$. Since $c_2(\mathcal{E})=c_2(\mathcal{E}(-1))-2=\deg(Z)-2$, we have a contradiction, because of the parity of $c_2$. Therefore $Z=\emptyset$, and $\mathcal{E}$ is decomposable \cite[Remark 1.0.1]{Har78}, another contradiction. 
		
		Therefore we have $H^0(\mathcal{E}(-1))=0$. Assume now that $H^0(\mathcal{E})\ne 0$. Again, consider the zero set $Z$ of a nonzero section of $\mathcal{E}$. Since $\mathcal{E}$ is not decomposable, $Z\ne\emptyset$. Observe that because of the parity of $c_2$, as before we see that $Z$ cannot be a single line. Consider a line $L$ that meets $Z$ in at least two points. Then
		\begin{align*}
			\mathcal{E}|_L=\mathcal{O}_L(m)\oplus\mathcal{O}_L(n)
		\end{align*}
		with $m\ge 2$, $m+n=-1$. Therefore since $\mathcal{E}(3)$ is nef, the only acceptable case is $m=2$, $n=-3$. Observe that, by Lemma \ref{LemmaRmk2.5}, the negative section of $\pi^{-1}(L)\simeq \mathbb{F}_5$ would have anticanonical degree $-1$. Since there are infinitely many such lines $L$, we would have infinitely many irreducible curves with negative anticanonical degree in $X$, a contradiction by Lemma \ref{LemmaSQMFano}. 
		
		We conclude that $H^0(\mathcal{E})=0$, i.e. $\mathcal{E}$ is stable. The Schwarzenberger inequality $c_1^2\le 4c_2$ (see \cite[Lemma 3.2]{Har78}) and the parity of $c_2$ imply that $c_2\ge 2$. Since $c_2\le 6$ by Lemma \ref{LemmaBoundc2}, we get $c_2\in\{2,4,6\}$.    
	\end{proof}
	\section{The case $c_2(\mathcal{E})=2$}
	In this section we are going to prove Theorem \ref{TheoremNonFanoCase} (b) and Theorem \ref{TheoremBir}, by studying the birational geometry of $X=\mathbb{P}(\mathcal{E})$, where $\mathcal{E}$ is a rank two stable vector bundle on $\mathbb{P}^3$ with $c_1=-1$ and $c_2=2$.  We denote by $\xi$ the tautological divisor of $X$, and by $H$ the pullback in $X$ of a plane in $\mathbb{P}^3$. The moduli space of these vector bundles is fully described in \cite{HS81}. Below, we summarize the results from \cite{HS81} that we are going to use.
	\begin{proposition}\label{PropositionZ}
		The vector space $H^0(\mathcal{E}(1))$ is one-dimensional. Let $Z$ be the zero set of a nonzero section $s\in H^0(\mathcal{E}(1))$. Then, we can choose homogeneous coordinates $[x:y:z:w]$ for $\mathbb{P}^3$ such that the ideal defining $Z$ is of the form $I_Z=(x^2,xy,y^2,xf-yg)$, where $f,g$ are two homogeneous quadratic forms in the coordinates $z,w$ that do not have common zeros in $\mathbb{P}^1_{[w:z]}$. 
	\end{proposition}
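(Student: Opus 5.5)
The plan is to prove $h^0(\mathcal{E}(1))=1$ by Riemann--Roch and vanishing, and then to identify $Z$ through the Serre correspondence, as in \cite{HS81}.

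\textbf{Step 1: the dimension count.} Riemann--Roch for a rank two bundle on $\mathbb{P}^3$ with $c_1(\mathcal{E}(1))=1$ and $c_2(\mathcal{E}(1))=c_2+c_1+1=2$ gives $\chi(\mathcal{E}(1))=1$.

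Next we show that the higher cohomology vanishes:
\begin{itemize}
\item $h^3(\mathcal{E}(1))=h^0(\mathcal{E}(-4))=0$, by Serre duality with $\mathcal{E}^\vee\simeq\mathcal{E}(1)$ and stability.
\item $h^2(\mathcal{E}(1))=h^1(\mathcal{E}(-4))$. This vanishes because $h^1(\mathcal{E}(t))=0$ for $t\le -2$, via restriction to a general plane and stability of the restriction (Barth's theorem).
\end{itemize}
Hence $h^0(\mathcal{E}(1))\ge 1$. For the reverse inequality $h^1(\mathcal{E}(1))=0$, it is easier to argue directly: two independent sections of $\mathcal{E}(1)$ would give, on a general plane $P$, sections of the stable bundle $\mathcal{E}(1)|_P$ with $c_2=2$. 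On $P$, the zero scheme of a section is two points, and the standard exact sequence $0\to\mathcal{O}_P\to\mathcal{E}(1)|_P\to I_{W}(1)\to 0$ gives $h^0=1+h^0(I_W(1))=2$. This alone is insufficient, so I would instead use the Serre correspondence in Step 2 to compute $h^0(\mathcal{E}(1))=1+h^0(I_Z(1))$ and show $h^0(I_Z(1))=0$.

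\textbf{Step 2: the zero scheme.} A nonzero section $s$ cannot vanish in codimension one, since $H^0(\mathcal{E})=0$. Thus $Z$ is a locally complete intersection curve with $\omega_Z\simeq\mathcal{O}_Z(c_1(\mathcal{E}(1))-4)=\mathcal{O}_Z(-3)$ and degree $2$. We also have
\[
0\to\mathcal{O}\to\mathcal{E}(1)\to I_Z(1)\to0,
\]
so $h^0(\mathcal{E}(1))=1+h^0(I_Z(1))$.

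A degree two locally complete intersection curve with $\omega_Z=\mathcal{O}_Z(-3)$ has $\chi(\mathcal{O}_Z)=3$, i.e.\ arithmetic genus $-2$. It cannot be reduced: two disjoint lines give genus $-1$, and a conic gives genus $0$. Nor can it be a plane double line, which has genus $0$. So $Z$ is a double structure on a line $L=\{x=y=0\}$, with multiplicity-two structure given by a surjection $\mathcal{N}^\vee_L=\mathcal{O}_L(-1)^2\to\mathcal{O}_L(a)$ and $p_a=-1-a$. This forces $a=1$. Since the kernel is $\mathcal{O}_L(-3)$, the surjection is $(x,y)\mapsto$ something given by two quadrics $f,g$ in $z,w$ without common zeros, and the ideal is $(x^2,xy,y^2,xf-yg)$. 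Such $Z$ is not contained in any plane (the degree-one part of the ideal is zero), so $h^0(I_Z(1))=0$ and $h^0(\mathcal{E}(1))=1$.

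The main obstacle is the classification of the multiplicity-two structures (Ferrand's construction) and checking the genus computation $p_a=-1-a$. I would take these from \cite{HS81}, or from the standard theory of double lines.
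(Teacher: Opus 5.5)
Your Step 2 is correct and is essentially the identification of $Z$ that the paper imports from \cite{HS81}. Step 1, however, contains an arithmetic error, and because of it the existence of a nonzero section of $\mathcal{E}(1)$ is never established.

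For a rank-two bundle on $\mathbb{P}^3$ with $c_1=1$, $c_2=2$, Riemann--Roch gives
\[ \chi(\mathcal{E}(1))=2+\tfrac{11}{6}c_1+(c_1^2-2c_2)+\tfrac16(c_1^3-3c_1c_2)=2+\tfrac{11}{6}-3-\tfrac56=0, \]
not $1$. Your own Step 2 data confirm this: $\deg Z=2$ and $p_a(Z)=-2$ give $\chi(\mathcal{O}_Z(1))=5$, hence $\chi(\mathcal{E}(1))=1+4-5=0$.

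So the vanishing of $h^2$ and $h^3$ only yields $h^0(\mathcal{E}(1))=h^1(\mathcal{E}(1))$, with no lower bound. Moreover, the ``reverse inequality'' $h^1(\mathcal{E}(1))=0$ you were aiming for is false: $h^0(\mathcal{O}_Z(1))=5$ and $h^0(I_Z(1))=0$ give $h^1(\mathcal{E}(1))=h^1(I_Z(1))=1$. Since Step 2 starts from a given nonzero $s$, you have only shown that \emph{if} $H^0(\mathcal{E}(1))\neq 0$, then it is one-dimensional and $Z$ has the stated normal form. The existence statement, which is the real content of \cite[Proposition 1.1]{HS81}, is missing.

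You need an independent argument that $h^1(\mathcal{E}(1))\neq 0$. Here is one.
\begin{enumerate}
\item The vanishing of $h^1(\mathcal{E}(t))$ for $t\le -2$, together with Riemann--Roch, gives $h^1(\mathcal{E}(-1))=1$ and $h^1(\mathcal{E})=2$.
\item Hence the map $H^0(\mathcal{O}(1))\to\mathrm{Hom}(H^1(\mathcal{E}(-1)),H^1(\mathcal{E}))$ goes from a $4$-dimensional to a $2$-dimensional space, so some nonzero linear form $h$ lies in its kernel. The plane $H=\{h=0\}$ then satisfies $H^0(\mathcal{E}_H)\simeq H^1(\mathcal{E}(-1))\neq 0$.
\item Since $h^0(\mathcal{E}_H(-1))=0$, a nonzero section of $\mathcal{E}_H$ vanishes on a length-two scheme. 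The line $\ell\subset H$ containing that scheme satisfies $\mathcal{E}|_\ell\simeq\mathcal{O}_\ell(a)\oplus\mathcal{O}_\ell(-1-a)$ with $a\ge 2$.
\item Suppose $h^1(\mathcal{E}(1))=0$. Since also $h^2(\mathcal{E})=h^3(\mathcal{E}(-1))=0$, $\mathcal{E}$ would be $2$-regular in the sense of Castelnuovo--Mumford, so $\mathcal{E}(2)$ would be globally generated. This contradicts the negative summand $\mathcal{O}_\ell(1-a)$ of $\mathcal{E}(2)|_\ell$.
\end{enumerate}
Hence $h^0(\mathcal{E}(1))=h^1(\mathcal{E}(1))\ge 1$.

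A minor point: restriction to a general plane only shows that $h^1(\mathcal{E}(t-1))\le h^1(\mathcal{E}(t))$ for $t\le 0$. To get the vanishing for $t\le -2$ that you use, you also need the strict-growth lemma for $h^1$, or the spectrum $\{-1,0\}$.
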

	\begin{proof}
		This follows from Propositions 1.1, 2.1 and 2.2 in \cite{HS81}. 
	\end{proof}
	
	\begin{proposition}[\cite{HS81}, Proposition 2.3]\label{PropSectionE(2)}
		The vector bundle $\mathcal{E}(2)$ has a section whose zero set is a disjoint union of two irreducible conics $\mathcal{C}_1$ and $\mathcal{C}_2$. In particular, there is an exact sequence
		\begin{align*}
			0\to\mathcal{O}\longrightarrow\mathcal{E}(2)\longrightarrow I_{\mathcal{C}}(3)\to 0
		\end{align*}
		where $\mathcal{C}:=\mathcal{C}_1\cup\mathcal{C}_2$. The planes containing the conics meet along the line $L_0=\{x=y=0\}$, the support of the ideal $I_Z$. 
	\end{proposition}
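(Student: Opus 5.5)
The plan is to construct the required section through the section $s\in H^0(\mathcal{E}(1))$ of Proposition \ref{PropositionZ}, and then to read off its zero locus by restricting to planes through $L_0$. For any $t\in H^0(\mathcal{E}(2))$, the wedge product $s\wedge t$ is a section of $\wedge^2\mathcal{E}(3)=\mathcal{O}(2)$, that is, a quadric $Q_t$. Since $s$ vanishes on $Z$, the quadric $Q_t$ vanishes on $Z$ and on $W:=\{t=0\}$. So we get a linear map
\[
\Phi\colon H^0(\mathcal{E}(2))\to H^0(I_Z(2))=\langle x^2,xy,y^2\rangle ,
\]
where the last equality uses the explicit ideal $I_Z=(x^2,xy,y^2,xf-yg)$ and the fact that $xf-yg$ is a cubic. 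The kernel of $\Phi$ is $H^0(\mathcal{O}(1))\cdot s$. Indeed, $s\wedge t=0$ forces $t=\lambda s$ off $Z$, and since $Z$ has codimension $2$ the function $\lambda$ extends to a section of $\mathcal{O}(1)$. Every nonzero element of $\langle x^2,xy,y^2\rangle$ is a binary quadratic form in $x,y$, so it is a union of two planes $P_1,P_2$ through $L_0$. This already explains the last assertion of the statement.

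The first step is to show that $\Phi$ is surjective, or at least that its image contains a product $\ell_1\ell_2$ of two distinct linear forms in $x,y$. For this I would twist the sequence $0\to\mathcal{O}\to\mathcal{E}(1)\to I_Z(2)\to 0$ to get
\[
0\to\mathcal{O}(1)\to\mathcal{E}(2)\to I_Z(3)\to 0.
\]
Riemann--Roch gives $\chi(\mathcal{E}(2))$. Stability and Serre duality, via $\mathcal{E}^\vee\simeq\mathcal{E}(1)$, should give $h^i(\mathcal{E}(2))=0$ for $i\geq 2$, and I would compute $h^1(\mathcal{E}(2))$ from the cohomology of $I_Z$, which is explicit. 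Comparing $h^0(\mathcal{E}(2))-4$ with $\dim H^0(I_Z(2))=3$ should yield surjectivity. Alternatively, one can identify $\Phi$ directly: a lift of the cubic $\ell\cdot q\in H^0(I_Z(3))$ has $s\wedge t$ equal to the image under the natural identification.

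Next, fix $t$ with $Q_t=P_1\cup P_2$, $P_i=\{\ell_i=0\}$, and set $W=\{t=0\}$. Since $c_2(\mathcal{E}(2))=2-2+4=4$, the locus $W$ is a locally complete intersection curve of degree $4$ with $\omega_W\simeq\mathcal{O}_W(c_1(\mathcal{E}(2))-4)=\mathcal{O}_W(-1)$, provided $t$ vanishes in codimension $2$. The last condition is achieved by adding a general multiple $\lambda s$, which does not change $Q_t$. Since $W\subset P_1\cup P_2$, I would write $W=W_1\cup W_2$ with $W_i\subset P_i$. On $P_i\setminus L_0$ the section $s$ is nowhere zero and $s\wedge t=0$. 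Hence $\mathcal{E}(2)|_{P_i}$ contains the line subbundle generated by $s$ away from $L_0$, and $t|_{P_i}$ is a section of that saturated subsheaf. This should identify $W_i$ with a plane curve whose degree is computed by restricting the Chern classes of $\mathcal{E}$ to $P_i$, and which I expect to be a conic. The identity $\omega_W=\mathcal{O}_W(-1)$ then forces $W_1\cap W_2=\emptyset$, because conics satisfy $\omega=\mathcal{O}(-1)$ exactly, and any intersection points on $L_0$ would add to the degree of the dualizing sheaf.

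The main obstacle is this local analysis near $L_0$. There I must check that $W$ does not contain $L_0$, that each $W_i$ is an honest conic in $P_i$, and that for a general choice of $(\ell_1,\ell_2,\lambda)$ each conic is irreducible. For this I would use the explicit local form of $s$ along the double structure $Z$ given by $(x^2,xy,y^2,xf-yg)$. The condition that $f,g$ have no common zero should guarantee that the restricted conics avoid $L_0$ and vary in a base-point-free family, so that Bertini gives irreducibility. Finally, the Serre correspondence turns the section $t$ into the exact sequence $0\to\mathcal{O}\to\mathcal{E}(2)\to I_{\mathcal{C}}(3)\to 0$ of the statement.
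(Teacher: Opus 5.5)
Your strategy is the right one. The paper does not reprove this statement; it quotes \cite[Proposition 2.3]{HS81}. The way it later uses that proof (the at least five-dimensional subspaces of $H^0(\mathcal{E}(2))$ in the proof of Lemma~\ref{LemmaE(2)}) matches your construction: the preimage of the line spanned by $\ell_1\ell_2$ under $t\mapsto s\wedge t$. Two points need correcting first.

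First, $\det\mathcal{E}(1)=\mathcal{O}(1)$, so the Koszul sequence is $0\to\mathcal{O}\to\mathcal{E}(1)\to I_Z(1)\to 0$, not one ending in $I_Z(2)$. Its twist $0\to\mathcal{O}(1)\to\mathcal{E}(2)\to I_Z(2)\to 0$ has $\Phi$ as its map on global sections. Surjectivity of $\Phi$ is then immediate from $H^1(\mathcal{O}(1))=0$, with no Riemann--Roch, stability or Serre duality needed. Your $I_Z(3)$ also contradicts your own first paragraph.

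Second, the Chern classes of $\mathcal{E}|_{P_i}$ do not determine the degree of the curve part of $\{t=0\}\cap P_i$. A saturated $\mathcal{O}_{P_i}(a)\subset\mathcal{E}|_{P_i}$ is numerically possible for every $a$. What forces a conic is that $s|_{P_i}$ vanishes to order exactly one along $L_0$, and this has to be read off from $I_Z$.

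The genuine gap is the analysis along $L_0$. It is the whole content of the proposition, you leave it as an expectation, and the mechanism you suggest is wrong. A conic in $P_i$ cannot avoid the line $L_0\subset P_i$, and the conics do not move in a base-point-free family. Here is what actually happens.

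Write $\ell_i=\alpha_i x+\beta_i y$ and $q_i=\beta_i f+\alpha_i g$, and let $h_i$ be an equation of $L_0$ in $P_i$. By Remark~\ref{RemarkTangentLines}, $I_Z\mathcal{O}_{P_i}=h_i\,(h_i,q_i)$. So $s|_{P_i}=h_i s_i'$ with $s_i'\in H^0(\mathcal{E}|_{P_i})$ vanishing exactly on $V(h_i,q_i)$, and $s\wedge t|_{P_i}=0$ forces $t|_{P_i}=\mu_i s_i'$ with $\mu_i\in H^0(\mathcal{O}_{P_i}(2))$.

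Near a point of $L_0$ where $f\neq 0$ one has $I_Z=(xf-yg,\,y^2)$, so in a suitable frame $s=(xf-yg,\,y^2)$. Solving $(xf-yg)t_2-y^2t_1=ax^2+bxy+cy^2$ (up to a unit) gives $t_2|_{L_0}=0$ and $t_1|_{L_0}=-(ag^2+bfg+cf^2)/f^2=-q_1q_2/f^2$. The computation is symmetric where $g\neq 0$. Since $s|_{L_0}=0$, this does not depend on the chosen lift, so $\{t=0\}\cap L_0=V(q_1q_2)$. Comparing with $t|_{L_0}=\mu_i|_{L_0}\,s_i'|_{L_0}$ gives $\mu_1|_{L_0}\propto q_2$ and $\mu_2|_{L_0}\propto q_1$. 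This yields the following:
\begin{itemize}
\item $t|_{P_i}\neq 0$, so every lift vanishes in codimension two.
\item Neither $V(\mu_i)$ contains $L_0$.
\item $\{t=0\}=V(\mu_1)\cup V(\mu_2)$, since $\{t=0\}$ is a locally complete intersection curve of degree $4$ containing this degree-$4$ union.
\item $V(\mu_1)\cap L_0=V(q_2)$ and $V(\mu_2)\cap L_0=V(q_1)$ are disjoint, because $q_1,q_2$ span $\langle f,g\rangle$ and $f,g$ have no common zero. Compare Corollary~\ref{CorollaryVandConics}. Your $\omega$-argument also gives disjointness, but only once this decomposition is established.
\end{itemize}

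Finally, replacing $t$ by $t+\lambda s$ changes $\mu_i$ into $\mu_i+\lambda|_{P_i}h_i$. So $V(\mu_1)$ runs over all conics of $P_1$ cutting $L_0$ in $V(q_2)$, and for $\lambda=0$ these can be line pairs. This family has base points, so plain Bertini for a base-point-free system does not apply. Irreducibility of both conics for general $\lambda$ holds instead because, for each $i$, smoothness is an open condition in $\lambda$ and the family contains smooth conics, so the condition is nonempty.
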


	\begin{proposition}\label{PropositionJumping}
		Let $L_0$ be the reduced support of the ideal $I_Z$ in Proposition \ref{PropositionZ}, which coincides with the intersection of the planes containing the conics $\mathcal{C}_1$ and $\mathcal{C}_2$ in Proposition \ref{PropSectionE(2)}. We have the following:
		\begin{itemize}
			\item[(i)] $\mathcal{E}|_{L_0}\simeq \mathcal{O}_{L_0}(2)\oplus\mathcal{O}_{L_0}(-3)$.
			\item[(ii)] There is a two dimensional locally closed subvariety of lines $L$ in the Grassmann variety $Gr(1,3)$ such that $\mathcal{E}|_L\simeq \mathcal{O}_L(1)\oplus\mathcal{O}_L(-2)$. These lines correspond to the ones that meet $L_0$ in one point and whose scheme-theoretical intersection with $Z$ has length two, i.e. they are \emph{tangent} to $Z$ at that point.
			\item[(iii)] If $L$ is any other line, $\mathcal{E}|_{L}\simeq \mathcal{O}\oplus\mathcal{O}(-1)$. 
		\end{itemize}
	\end{proposition}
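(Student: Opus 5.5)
The plan is to read off the splitting type of $\mathcal{E}|_L$ from the section $s\in H^0(\mathcal{E}(1))$ of Proposition \ref{PropositionZ}, whose zero scheme $Z$ is explicit, supplemented by the section of $\mathcal{E}(2)$ of Proposition \ref{PropSectionE(2)}. Write $\mathcal{E}|_L\simeq\mathcal{O}_L(a)\oplus\mathcal{O}_L(-1-a)$ with $a\ge 0$. By Grauert--Mülich (or directly, since $\mathcal{E}$ is stable with $c_1=-1$), the general type is $a=0$. If $L\not\subset Z_{red}=L_0$, then $s|_L$ is a nonzero section of $\mathcal{E}(1)|_L=\mathcal{O}_L(a+1)\oplus\mathcal{O}_L(-a)$. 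It vanishes exactly on the scheme $L\cap Z$ of length $\ell$.

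First I treat the case $a\ge 1$. Then $s|_L$ lies in the $\mathcal{O}(a+1)$ summand, so $\ell=a+1\ge 2$. In particular $L$ meets $L_0$. If $a=0$, then $s|_L$ is a section of $\mathcal{O}(1)\oplus\mathcal{O}$ and $\ell\le 1$. Hence, for $L\neq L_0$, we get $a=\ell-1$ when $\ell\ge 2$ and $a=0$ otherwise.

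Next I compute $\ell$. Take $L$ through a point $p=[0:0:z_0:w_0]$ of $L_0$, with direction $v=(\alpha,\beta,\gamma,\delta)$, $(\alpha,\beta)\neq 0$. Parametrize $L$ by $x=\alpha t$, $y=\beta t$, with $z,w$ affine-linear in $t$. The generators $x^2,xy,y^2$ restrict to $t^2\cdot(\text{unit})$. The generator $xf-yg$ restricts to $t(\alpha f(p)-\beta g(p))+O(t^2)$. So $\ell=2$ exactly when $\alpha f(p)=\beta g(p)$, and $\ell=1$ otherwise. Since $f,g$ have no common zero, at each $p$ this is one linear condition on $(\alpha:\beta)$, i.e. a tangent plane to $Z$ along $L_0$. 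It gives a pencil of lines through $p$. Letting $p$ vary gives a two-dimensional family, the lines tangent to $Z$. They have $a=1$, which is (ii). All other lines have $\ell\le 1$, hence $a=0$, which is (iii).

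For (i) the section $s$ is useless, since $L_0\subset Z$. I would use Proposition \ref{PropSectionE(2)} instead. $L_0$ lies in the planes of both conics. In each plane it meets the conic in a length-two scheme, so it meets $\mathcal{C}$ in length $4$; it cannot lie in $\mathcal{C}$, since the conics are irreducible. Restricting the section of $\mathcal{E}(2)$ gives a section of $\mathcal{O}(a+2)\oplus\mathcal{O}(1-a)$ vanishing on a length-$4$ scheme, so $a+2\ge 4$, i.e. $a\ge 2$. For the upper bound I would use that $I_Z\subset\mathcal{O}$ restricted to $L_0$ has conormal-type quotients of degree $-2$ and $-3$. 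Alternatively, restrict $0\to\mathcal{O}(-1)\to\mathcal{E}\to I_Z(0)\otimes\ldots$ and compute $\mathcal{E}|_{L_0}$ as an extension. Another option is $h^0(\mathcal{E}(-1)|_{L_0})$ via the sequence of $s$; the conormal bundle of $L_0$ in the multiplicity structure gives $a=2$ exactly. The cleanest bound is probably to note $a\le 2$, because $a\ge 3$ would force $\mathcal{E}(1)|_{L_0}$ to have a summand of degree $\ge 4$ while $h^0(\mathcal{E}(1))=1$, contradicting a restriction-surjectivity estimate.

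The main obstacle is this upper bound $a\le 2$ in (i), and the careful local verification that the tangent lines in (ii) are exactly those with $\ell=2$.
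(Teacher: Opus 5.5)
\textbf{What works.} Your treatment of (ii) and (iii) is sound and complete. Two facts do the job: first, $\ell=a+1$ when $a\ge 1$ and $\ell\le 1$ when $a=0$; second, the local computation gives $\ell\in\{1,2\}$ for $L\neq L_0$ meeting $L_0$, with $\ell=2$ iff $\alpha f(p)=\beta g(p)$. Grauert--M\"ulich is not needed. Local closedness is clear from the explicit description, or from semicontinuity of $h^0(\mathcal{E}(-1)|_L)$. The paper gives no argument here and simply cites \cite[Proposition 4.1]{HS81}. Your approach is therefore a genuinely different, self-contained derivation from Propositions \ref{PropositionZ} and \ref{PropSectionE(2)}.

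\textbf{The gap.} In (i) you only prove $a\ge 2$ and leave the upper bound open. The argument you call cleanest does not work. Since $L_0\subset Z$, the restriction $H^0(\mathcal{E}(1))\to H^0(\mathcal{E}(1)|_{L_0})$ is the zero map. Moreover, already for the correct splitting $h^0(\mathcal{E}(1)|_{L_0})=h^0(\mathcal{O}_{L_0}(3))=4>1=h^0(\mathcal{E}(1))$. So no restriction estimate comparing these numbers can exclude $a\ge 3$.

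\textbf{The fix.} The missing step is already in your own method. For the restricted section $t|_{L_0}$ of $\mathcal{E}(2)|_{L_0}$ you only used that its zero scheme has length $\le a+2$. But once $a\ge 2$, $H^0(\mathcal{O}_{L_0}(1-a))=0$. Hence $t|_{L_0}$ lies in the summand $\mathcal{O}_{L_0}(a+2)$, and its zero scheme has length \emph{exactly} $a+2$. This is the same observation you made for $s|_L$ when $a\ge 1$. That zero scheme is $L_0\cap\mathcal{C}$, which has length exactly $4$. So $a+2=4$, i.e.\ $\mathcal{E}|_{L_0}\simeq\mathcal{O}_{L_0}(2)\oplus\mathcal{O}_{L_0}(-3)$.

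\textbf{An alternative.} Your extension idea can also be made to work. Since $s|_{L_0}=0$, restricting $0\to\mathcal{O}(-1)\to\mathcal{E}\to I_Z\to 0$ gives $\mathcal{E}|_{L_0}\simeq I_Z\otimes\mathcal{O}_{L_0}$ by right exactness. The syzygies $x(xf-yg)=f\,x^2-g\,xy$ and $y(xf-yg)=f\,xy-g\,y^2$ then present this sheaf as $\mathcal{O}_{L_0}(-3)\oplus\operatorname{coker}\bigl(\mathcal{O}_{L_0}(-4)^{2}\to\mathcal{O}_{L_0}(-2)^{3}\bigr)$. The cokernel is the line bundle $\mathcal{O}_{L_0}(2)$, because $f,g$ have no common zero on $L_0$. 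As written, though, that computation is only gestured at in your proposal.
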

	\begin{proof}
		It follows from \cite[Proposition 4.1]{HS81}.
	\end{proof}

\begin{remark}
	We use the same choice of coordinates as in
	Proposition \ref{PropositionZ}. Let
	$p=[0:0:z:w]\in L_0=\{x=y=0\}$.
	The Zariski tangent space $T_pZ$ is defined by
	\[
	f(p)\,dx-g(p)\,dy=0
	\]
	in $T_p\mathbb P^3$. In particular, the unique plane
	through $L_0$ whose tangent space at $p$ equals $T_pZ$ is
	\[
	\{f(p)x-g(p)y=0\}.
	\]
	
	Since $f|_{L_0}$ and $g|_{L_0}$ have no common zero,
	they define a morphism
	\[
	\tau\colon L_0\simeq\mathbb P^1\longrightarrow\mathbb P^1,
	\qquad
	p\longmapsto[f(p):-g(p)].
	\]
	Since $f|_{L_0}$ and $g|_{L_0}$ are linearly independent,
	then $\tau$ has degree two and is therefore surjective.
	Consequently, every plane containing $L_0$ is tangent to
	$Z$ at two points of $L_0$, counted with multiplicity.
\end{remark}

\begin{remark}\label{RemarkTangentLines}
	We now describe the lines in $\mathbb P^3$ that are
	tangent to $Z$ at a point of $L_0$. More precisely, we
	say that a line $L\ne L_0$ is tangent to $Z$ at
	$p=L\cap L_0$ if its tangent direction at $p$ is
	contained in $T_pZ$.
	
	Every plane containing
	$L_0=\{x=y=0\}$ is of the form
	\[
	\mathcal P_{\alpha,\beta}
	=\{\alpha x+\beta y=0\},
	\qquad [\alpha:\beta]\in\mathbb P^1.
	\]
	Set
	\[
	h_{\alpha,\beta}
	=(\beta f+\alpha g)|_{L_0}.
	\]
	By the preceding remark, $\mathcal P_{\alpha,\beta}$
	is tangent to $Z$ at precisely the points of
	$V(h_{\alpha,\beta})\subset L_0$, counted with
	multiplicity.
	
	We first examine the scheme-theoretic intersection of
	$Z$ with $\mathcal P_{\alpha,\beta}$. If $\alpha\ne0$,
	identifying $\mathcal P_{\alpha,\beta}$ with
	$\mathbb P^2_{[y:z:w]}$ by setting
	$x=-\frac{\beta}{\alpha}y$, we obtain
	\[
	I_Z\mathcal O_{\mathcal P_{\alpha,\beta}}
	=(y^2,y(\beta f+\alpha g)).
	\]
	Its support is $L_0$, and its embedded point is supported
	at each zero of $h_{\alpha,\beta}$. Thus, if
	$h_{\alpha,\beta}$ has two distinct roots, there are two
	distinct embedded points; if it is a perfect square,
	there is one embedded point supported at a single point
	of $L_0$, with multiplicity two. The same conclusion
	holds for $\alpha=0$, since in this case
	\[
	I_Z\mathcal O_{\mathcal P_{0,1}}=(x^2,xf),
	\]
	and $h_{0,1}=f|_{L_0}$.
	
	We can now count the planes for which
	$h_{\alpha,\beta}$ is a perfect square. The perfect
	squares form a conic in
	\[
	\mathbb P\bigl(H^0(L_0,\mathcal O_{L_0}(2))\bigr)
	\simeq\mathbb P^2.
	\]
	Since $f|_{L_0}$ and $g|_{L_0}$ are linearly independent,
	the forms $h_{\alpha,\beta}$ parametrize a projective
	line in this plane. The conic contains no projective
	line, so the pencil meets it in two points counted
	with multiplicity. Hence there are either one or two
	distinct planes for which $h_{\alpha,\beta}$ is a
	perfect square.
	
	Finally, let $L\ne L_0$ be a line contained in
	$\mathcal P_{\alpha,\beta}$, and set $p=L\cap L_0$.
	The tangent direction of $L$ at $p$ belongs to
	$T_p\mathcal P_{\alpha,\beta}$, defined by
	\[
	\alpha\,dx+\beta\,dy=0.
	\]
	By the description of $T_pZ$ in the preceding remark,
	this direction belongs to $T_pZ$ if and only if
	\[
	\beta f(p)+\alpha g(p)=0,
	\]
	or equivalently, $p\in V(h_{\alpha,\beta})$.
	Therefore,
	\[
	\boxed{
		\begin{gathered}
			L\subset\mathcal P_{\alpha,\beta}
			\text{ is tangent to }Z\text{ at }L\cap L_0\\
			\Longleftrightarrow\
			L\cap L_0\in V(h_{\alpha,\beta}).
	\end{gathered}}
	\]
\end{remark}

	The results contained in \cite{HS81} are the starting point for our analysis. We will complete the study of the birational geometry of $X$ in three steps.
	\begin{enumerate}
		\item First, we are going to verify that $X$ satisfies the necessary conditions for being a SQM of a Fano 4-fold appearing in Lemma \ref{LemmaSQMFano}, i.e. that there exist finitely many pairwise disjoint \emph{exceptional lines} $\ell_1,\dots,\ell_r$ in $X$, and for every other irreducible curve $C\subset X$, we have $-K_X\cdot C\ge 1+s$, where $s$ is the length of the scheme-theoretic intersection of $C$ with the union of the exceptional lines.    
		
		In order to do so, we are going to fully describe the cones $\Nef(X)$, $\NE(X)$, and we are going to study in depth the geometry of the effective divisors in $|\xi_{\mathcal{E}(2)}|$ described in Lemma \ref{LemmaE(2)}. We are also going to use Proposition \ref{PropositionJumping} to identify one curve $\mathbf{l}_0$ in $X$ that has all properties that are necessary for being the only \emph{exceptional line} in $X$, i.e. it is a smooth rational curve with $\mathcal{N}_{\mathbf{l}_0/X}\simeq\mathcal{O}_{\mathbf{l}_0}(-1)^{\oplus 3}$.
		\item Secondly, we are going to study the geometry of the blow-up $\hat{X}$ of $X$ along the candidate exceptional line $\mathbf{l}_0$. 
		 We are then going to show that $\hat{X}$ has another extremal contraction onto a smooth Fano variety $X^+$ that realizes $X^+$ as the flip of $X$ with $\mathbf{l}_0$ as its indeterminacy locus in $X$. 
		 
		 \item Thirdly, we are going to identify the fibers of the second extremal contraction of $X^+$, meaning the one that is not related to the flip $X\dasharrow X^+$. We are then going to show that such contraction is divisorial, and describe the full cone of contractions of $X^+$.      
	\end{enumerate}

	\subsection{The geometry of $X=\mathbb{P}(\mathcal{E})$.}
	
	Consider $X=\mathbb{P}(\mathcal{E})$ as above and its projection $\pi\colon X\to \mathbb{P}^3$. 	Denote by $\xi$ the tautological divisor of $X$, and by $H$ the pullback in $X$ of a plane in $\mathbb{P}^3$.  In this section, we describe the geometry of $X$. 
	\begin{tcolorbox}[
		title={Roadmap},
		colback=gray!5,
		colframe=black!60,
		boxrule=0.5pt,
		arc=1mm,
		left=2mm,
		right=2mm,
		top=1mm,
		bottom=1mm
		]
		\begin{itemize}
			\item We consider $L_0$, the jumping line for $\mathcal{E}$ appearing in Proposition \ref{PropositionJumping}, and the negative section $\mathbf{l}_0$ of its preimage $\pi^{-1}(L_0)\simeq\mathbb{F}_5$ in $X$. We prove that 
			\begin{align*}
				\NE(X)=\mathbb{R}_{\ge 0}[\mathbf{f}]+\mathbb{R}_{\ge 0}[\mathbf{l}_0] \qquad \Nef(X)=\mathbb{R}_{\ge 0}[H]+\mathbb{R}_{\ge 0}[\xi+3H]
			\end{align*} 
			where $[\mathbf{f}]$ is the numerical class of a one-dimensional fiber of $\pi\colon X\to \mathbb{P}^3$.  
			\item We describe the unique member $F$ of the linear system $|\xi+H|$ and the general member $D$ of the linear system $|\xi+2H|$. 
			\item We observe that $\mathbf{l}_0$ is contained in both $F$ and $D$, and we study the birational geometry of $D$ in detail. We prove that
			
			\[-K_D\cdot \mathbf{l}_0=0\qquad\text{and}\qquad-K_D\cdot C\ge 1\qquad \text{for all curves }C\subset D,\;C\ne\mathbf{l}_0.\]
			This is the key to prove that 
			\[-K_X\cdot \mathbf{l}_0=-1\qquad\text{and}\qquad-K_X\cdot C\ge 1 \qquad\text{for all curves }C\subset X,\;C\ne\mathbf{l}_0.\]
			In fact, we prove that $X$ fully satisfies the necessary conditions for being a SQM of a Fano 4-fold (in the sense of Lemma \ref{LemmaSQMFano}), with $\mathbf{l}_0$ being its only \emph{exceptional line} (see Proposition \ref{PropositionNegCurves}).     
		\end{itemize}
	\end{tcolorbox}
	\paragraph{The Mori cone of $X$.}
	Consider $S_0:=\pi^{-1}(L_0)$. By Proposition \ref{PropositionJumping}, $S_0= \mathbb{P}(\mathcal{O}_{L_0}(2)\oplus\mathcal{O}_{L_0}(-3))\simeq\mathbb{F}_5$. Let $\mathbf{l}_0$ be the negative section of $S_0$. Then by Lemma \ref{LemmaRmk2.5}, $-K_{X}\cdot \mathbf{l}_0=-K_{\mathbb{P}^3}\cdot L_0-5=-1$. 
	Notice that $-K_{X}\equiv2\xi+5H$ in $N^1(X)$, and $H\cdot \mathbf{l}_0=1$ by the projection formula. This yields $\xi\cdot \mathbf{l}_0=-3$, so that in $X$ we have the following intersection table:
	\begin{center}
		\begin{tabular}{|c|c|c|}
			\hline
			$\cdot$ & $\xi$ & $H$ \\
			\hline
			$\mathbf{l}_0$ & $-3$ & $1$\\
			\hline
			$\mathbf{f}$ & $1$ & $0$\\
			\hline
		\end{tabular}
	\end{center}
	where $\mathbf{f}$ is a fiber of $\pi$. 
	
	\begin{lemma}\label{LemmaTnef}
		The vector bundle $\mathcal{E}(3)$ is globally generated. In particular, $\xi_{\mathcal{E}(3)}=\xi+3H$ is globally generated.
	\end{lemma}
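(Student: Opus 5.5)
We use the exact sequence of Proposition \ref{PropSectionE(2)}, twisted by $\mathcal{O}(1)$:
\[
0\to\mathcal{O}(1)\longrightarrow\mathcal{E}(3)\longrightarrow I_{\mathcal{C}}(4)\to 0,
\]
where $\mathcal{C}=\mathcal{C}_1\cup\mathcal{C}_2$ is a disjoint union of two plane conics. The strategy is to show that global sections of $\mathcal{E}(3)$ generate the fibre at every point. For this we need two facts: $I_{\mathcal{C}}(4)$ is globally generated, and sections of $I_{\mathcal{C}}(4)$ lift to $\mathcal{E}(3)$. The lifting is automatic, because $H^1(\mathcal{O}(1))=0$ makes $H^0(\mathcal{E}(3))\to H^0(I_{\mathcal{C}}(4))$ surjective. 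Also $\mathcal{O}(1)$ is globally generated.

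Given these two facts, fix a point $p$ and look at the fibre sequence there. Away from $\mathcal{C}$ the sequence $0\to\mathcal{O}(1)_p\to\mathcal{E}(3)_p\to I_{\mathcal{C}}(4)_p\to0$ is an exact sequence of fibres. The images of global sections then hit the quotient, and the subspace $\mathcal{O}(1)_p$ is also hit. Hence $\mathcal{E}(3)_p$ is generated.

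At points of $\mathcal{C}$ the sequence of fibres is only right exact, so this argument needs more care. Instead we argue at the level of stalks. The map $H^0(\mathcal{E}(3))\otimes\mathcal{O}\to\mathcal{E}(3)$ has image a subsheaf $\mathcal{G}$. Since $\mathcal{O}(1)$ is globally generated, $\mathcal{G}$ contains $\mathcal{O}(1)$. Since $H^0(\mathcal{E}(3))\to H^0(I_{\mathcal{C}}(4))$ is surjective and $I_{\mathcal{C}}(4)$ is globally generated, $\mathcal{G}$ also surjects onto $I_{\mathcal{C}}(4)$. By the snake lemma $\mathcal{G}=\mathcal{E}(3)$, which gives global generation everywhere, including along $\mathcal{C}$. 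So the proof reduces entirely to global generation of $I_{\mathcal{C}}(4)$.

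For that, let $P_i$ be the plane of $\mathcal{C}_i$, with linear form $\ell_i$, and let $q_i$ be quadrics cutting $\mathcal{C}_i$ in $P_i$, so $I_{\mathcal{C}_i}=(\ell_i,q_i)$. The ideal $I_{\mathcal{C}}=I_{\mathcal{C}_1}\cap I_{\mathcal{C}_2}$ contains the products
\[
\ell_1\ell_2,\quad \ell_1q_2,\quad q_1\ell_2,\quad q_1q_2,
\]
of degrees $2,3,3,4$. We check that these generate $I_{\mathcal{C}}$ locally, and so globally generate $I_{\mathcal{C}}(4)$ after multiplying by suitable forms.
\begin{itemize}
\item Near a point of $\mathcal{C}_1$: the curve $\mathcal{C}_1$ is disjoint from $\mathcal{C}_2$, so some element of $I_{\mathcal{C}_2}$ is a unit there; by the disjointness one of $\ell_2,q_2$ is nonvanishing at the point. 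The products then generate $(\ell_1,q_1)$ locally.
\item Near a point of $\mathcal{C}_2$: the same argument applies with the roles swapped.
\item Off $\mathcal{C}$: some product is nonzero at the point, since any point outside $\mathcal{C}$ avoids either $\mathcal{C}_1$ or $\mathcal{C}_2$.
\end{itemize}
Since $\mathcal{O}(k)$ is globally generated for $k\ge0$, multiplying these products by forms of degree $2,1,1,0$ respectively gives sections of $I_{\mathcal{C}}(4)$ generating it at every point. This step is the main point to verify, chiefly that the product ideal is saturated to $I_{\mathcal{C}}$ locally, which is exactly what disjointness gives.

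Finally, if $\mathcal{E}(3)$ is globally generated, then $\mathcal{O}_{\mathbb{P}(\mathcal{E}(3))}(1)$ is globally generated on the projectivization. Hence $\xi_{\mathcal{E}(3)}=\xi+3H$ is globally generated, in particular nef.
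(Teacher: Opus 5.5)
Your proof is correct, but it takes a different route from the paper's. The paper argues in one line via Castelnuovo--Mumford regularity. By \cite[Proposition 2.2]{HS81}, $H^1(\mathcal{E}(2))=H^2(\mathcal{E}(1))=H^3(\mathcal{E})=0$, so $\mathcal{E}$ is $3$-regular and $\mathcal{E}(k)$ is globally generated for all $k\ge 3$.

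You instead proceed as follows:
\begin{itemize}
\item you start from the section of $\mathcal{E}(2)$ vanishing on two disjoint conics (Proposition \ref{PropSectionE(2)});
\item you lift sections using only $H^1(\mathcal{O}(1))=0$;
\item you handle the points of $\mathcal{C}$ correctly by working with the image subsheaf $\mathcal{G}$ rather than with fibres;
\item you reduce to global generation of $I_{\mathcal{C}}(4)$, which your local analysis of $\ell_1\ell_2,\ell_1q_2,q_1\ell_2,q_1q_2$ establishes.
\end{itemize}

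The regularity argument is shorter and needs only the cohomology table, not the geometry of zero loci. Yours needs no cohomology beyond the existence of the special section and extends verbatim to every $k\ge 3$. It also makes the threshold visible. By B\'ezout, every cubic containing $\mathcal{C}$ meets $L_0$ in at least four points counted with multiplicity (two on each conic), so it contains $L_0$. Hence $I_{\mathcal{C}}(3)$ is not globally generated along $L_0$, consistent with $\mathcal{E}(2)|_{L_0}\simeq\mathcal{O}_{L_0}(4)\oplus\mathcal{O}_{L_0}(-1)$ not being globally generated.

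One small slip in wording: at a point off $\mathcal{C}$, some product is nonzero because the point avoids \emph{both} conics. That lets you choose a nonvanishing generator of each $I_{\mathcal{C}_i}$ and multiply them; avoiding just one of them, as your sentence says, would not suffice.
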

	\begin{proof}
		By \cite[Proposition 2.2]{HS81}, we have $H^1(\mathcal{E}(2))=H^2(\mathcal{E}(1))=H^3(\mathcal{E})=0$. This is enough to use Castelnuovo's Theorem \cite[p. 99]{Mum66} and conclude that $\mathcal{E}(k)$ is globally generated for all $k\ge 3$. 
	\end{proof}
	\begin{lemma}\label{LemmaNE}
		Denote $T:=\xi+3H$. We have the following:
		\begin{itemize}
			\item $\Nef(X)=\mathbb{R}_{\ge 0}[H]+\mathbb{R}_{\ge 0}[T]$.
			\item $\NE(X)=\mathbb{R}_{\ge 0}[\mathbf{f}]+\mathbb{R}_{\ge 0}[\mathbf{l}_0]$.
		\end{itemize}
	\end{lemma}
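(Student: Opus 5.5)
Since $\rho_X=2$, both cones are two-dimensional, so it suffices to exhibit two nef divisor classes and two curve classes that pair to zero in the right way. Duality then closes the argument. The plan is to use the intersection table
\[
\xi\cdot\mathbf{l}_0=-3,\quad H\cdot\mathbf{l}_0=1,\quad \xi\cdot\mathbf{f}=1,\quad H\cdot\mathbf{f}=0,
\]
which gives $T\cdot\mathbf{l}_0=-3+3=0$ and $T\cdot\mathbf{f}=1$, together with $H\cdot\mathbf{f}=0$ and $H\cdot\mathbf{l}_0=1$.

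First, $H$ is nef, being the pullback of an ample class from $\mathbb{P}^3$. Also $T=\xi+3H$ is nef, since it is globally generated by Lemma \ref{LemmaTnef}. Hence $\mathbb{R}_{\ge 0}[H]+\mathbb{R}_{\ge 0}[T]\subseteq\Nef(X)$. Next, $\mathbf{f}$ and $\mathbf{l}_0$ are effective curves, so $\mathbb{R}_{\ge 0}[\mathbf{f}]+\mathbb{R}_{\ge 0}[\mathbf{l}_0]\subseteq\NE(X)$.

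For the reverse inclusions, take a nef class $aH+bT$. Pairing it with $\mathbf{f}$ gives $b\ge 0$, and pairing it with $\mathbf{l}_0$ gives $a\ge 0$. Since $H$ and $T$ form a basis of $N^1(X)$ (the determinant of the table with $\mathbf{f},\mathbf{l}_0$ is nonzero), this shows $\Nef(X)\subseteq\mathbb{R}_{\ge 0}[H]+\mathbb{R}_{\ge 0}[T]$. Dually, let $\gamma=\alpha\mathbf{f}+\beta\mathbf{l}_0$ be any effective class. Pairing with $H$ gives $\beta=H\cdot\gamma\ge 0$, and pairing with $T$ gives $\alpha=T\cdot\gamma\ge 0$. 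Therefore $\NE(X)$, and hence its closure, lies in $\mathbb{R}_{\ge 0}[\mathbf{f}]+\mathbb{R}_{\ge 0}[\mathbf{l}_0]$. Equality follows because the latter cone is closed and already contained in $\NE(X)$.

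There is no real obstacle here. The only substantive input is the global generation of $\mathcal{E}(3)$, which is already provided by Lemma \ref{LemmaTnef}. Beyond that, one only needs the correct computation of $\xi\cdot\mathbf{l}_0=-3$, which comes from Lemma \ref{LemmaRmk2.5} applied to $S_0\simeq\mathbb{F}_5$ together with $-K_X\equiv 2\xi+5H$.
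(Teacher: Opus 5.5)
Your proposal is correct and follows essentially the same argument as the paper. Both use the nefness of $H$ and of $T=\xi+3H$ (via Lemma \ref{LemmaTnef}) together with $T\cdot\mathbf{l}_0=0$, pair an arbitrary curve class $\alpha\mathbf{l}_0+\beta\mathbf{f}$ with $H$ and $T$ to force $\alpha,\beta\ge 0$, and then obtain $\Nef(X)$ by duality, which you spell out explicitly where the paper only says ``dually''.
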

	\begin{proof}
		Notice that $T$ is nef by Lemma \ref{LemmaTnef}, and $T\cdot\mathbf{l}_0=0$. Consider $C$ any irreducible curve in $X$. Then there exist $\alpha,\beta\in\mathbb{R}$ such that $C\equiv\alpha \mathbf{l}_0+\beta \mathbf{f}$. Intersect with the nef divisors $H$ and $T$, and impose non-negativity:
		\begin{align*}
			0\le C\cdot H= \alpha\\
			0\le C\cdot T=\beta.
		\end{align*} 
		Therefore $\NE(X)=\mathbb{R}_{\ge 0}[\mathbf{f}]+\mathbb{R}_{\ge 0}[\mathbf{l}_0]$.
		
		Dually, one can see that $\Nef(X)=\mathbb{R}_{\ge 0}[H]+\mathbb{R}_{\ge 0}[T]$.  
	\end{proof}
	
	\paragraph{The linear systems $|\xi+H|$ and $|\xi+2H|$ in $X$.}
	Based on the information contained in \cite{HS81}, we now describe the members of the following linear systems: $|\xi+H|=|\xi_{\mathcal{E}(1)}|$, $|\xi+2H|=|\xi_{\mathcal{E}(2)}|$. The members of these linear systems will serve as important starting points for our exploration of the geometry of $X$. Notice that $(\xi+H)\cdot\mathbf{l}_0=-2$ and $(\xi+2H)\cdot \mathbf{l}_0=-1$, thus $\mathbf{l}_0$ is contained in every member of both linear systems.  
	
	\begin{lemma}\label{LemmaE(1)}
		There is only one divisor $F$ in the linear system $|\xi+H|$, and it is realized through $\pi|_F\colon F\to \mathbb{P}^3$ as the blow-up of $\mathbb{P}^3$ along the ideal $I_Z$ in Proposition \ref{PropositionZ}. 
	\end{lemma}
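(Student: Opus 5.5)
First, uniqueness. Since $\pi_*\mathcal{O}_X(\xi+H)=\mathcal{E}(1)$, we have $H^0(X,\mathcal{O}_X(\xi+H))\simeq H^0(\mathbb{P}^3,\mathcal{E}(1))$. By Proposition \ref{PropositionZ} this space is one-dimensional, so $|\xi+H|$ consists of a single divisor $F$. This $F$ is the zero locus of the section $\tilde s\in H^0(\mathcal{O}_X(\xi_{\mathcal{E}(1)}))$ corresponding to the nonzero section $s\in H^0(\mathcal{E}(1))$.

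Next, describe $F$ fibrewise and globally. At a point $x\in\mathbb{P}^3$, the fibre $\pi^{-1}(x)$ is the $\mathbb{P}^1$ of rank one quotients of $\mathcal{E}(1)_x$ (Grothendieck convention). The section $\tilde s$ vanishes at a quotient $q$ exactly when $q(s(x))=0$.
\begin{itemize}
\item If $x\notin Z$, then $s(x)\neq 0$, so exactly one quotient kills it. Hence $\pi|_F$ is an isomorphism over $\mathbb{P}^3\setminus Z$.
\item If $x\in Z$, then the whole fibre lies in $F$.
\end{itemize}
Globally, $s$ gives $\mathcal{O}\to\mathcal{E}(1)$ with cokernel $I_Z\otimes\det\mathcal{E}(1)=I_Z(1)$, by the Koszul complex of a section of a rank two bundle whose zero locus has codimension two. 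The surjection $\mathcal{E}(1)\twoheadrightarrow I_Z(1)$ induces a closed embedding
\[
\mathbb{P}(I_Z(1))=\mathbb{P}(I_Z)\hookrightarrow\mathbb{P}(\mathcal{E}(1))=X,
\]
and $\mathbb{P}(I_Z)=\operatorname{Proj}\bigoplus_k\operatorname{Sym}^k I_Z$. Locally, if $\mathcal{E}(1)$ is trivialized and $s=(a,b)$ with $I_Z=(a,b)$ locally, this image is cut out in $\mathbb{P}^3\times\mathbb{P}^1_{[u:v]}$ by the single equation $bu-av=0$ (up to sign). That is precisely the local equation of $\tilde s=0$. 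So $F=\mathbb{P}(I_Z)$ scheme-theoretically.

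Finally, identify $\mathbb{P}(I_Z)$ with $\Bl_{I_Z}\mathbb{P}^3=\operatorname{Proj}\bigoplus_k I_Z^k$. There is always a surjection $\operatorname{Sym}^k I_Z\to I_Z^k$, giving a closed embedding $\Bl_{I_Z}\mathbb{P}^3\subset\mathbb{P}(I_Z)$. Equality holds iff $I_Z$ is of linear type, which is standard for an ideal locally generated by a regular sequence: $Z$ is a local complete intersection of codimension two, being the zero locus of a section of a rank two bundle with $\dim Z=1$. Alternatively, one can argue geometrically. $F$ is an effective Cartier divisor in the smooth $X$. It contains the blow-up, which is irreducible of dimension $3$. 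The extra part $\pi^{-1}(Z)$ has dimension $2$, so it cannot be a component of $F$. Since a Cartier divisor has no embedded components, $F$ has no components other than the closure of $\pi^{-1}(\mathbb{P}^3\setminus Z)$, which is the blow-up. Thus $F\simeq\Bl_{I_Z}\mathbb{P}^3$ via $\pi|_F$.

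The main obstacle is the scheme-theoretic comparison in this last step. One must check that $Z$ is genuinely a local complete intersection; this uses the nonempty zero locus of codimension exactly two, guaranteed since $\mathcal{E}$ is indecomposable and the explicit ideal $(x^2,xy,y^2,xf-yg)$ defines a curve. One must also ensure $F$ has no embedded or nonreduced structure beyond the blow-up. The regular-sequence (linear type) argument settles both at once, so I would rely on it, with the dimension count as a check.
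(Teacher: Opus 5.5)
Your proof is correct and takes the same route as the paper: uniqueness from $H^0(X,\mathcal{O}_X(\xi+H))\simeq H^0(\mathbb{P}^3,\mathcal{E}(1))$ being one-dimensional, and the blow-up description from the zero scheme of the corresponding section. The paper treats this last correspondence as well known, whereas you spell out the identification $F=\mathbb{P}(I_Z(1))\simeq\Bl_{I_Z}\mathbb{P}^3$, using the Koszul sequence $0\to\mathcal{O}\to\mathcal{E}(1)\to I_Z(1)\to 0$ and the fact that the locally complete intersection ideal $I_Z$ is of linear type.
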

	\begin{proof}
		This follows directly from Proposition \ref{PropositionZ} and from the well known correspondence $H^0(X,\xi_{\mathcal{E}(1)})\simeq H^0(\mathbb{P}^3,\mathcal{E}(1))$.  
	\end{proof}
	
	\begin{lemma}\label{LemmaE(2)}
		The linear system $|\xi+2H|$ is six-dimensional. A general member $D$ is isomorphic, via $\pi|_D\colon D\to\mathbb P^3$, to the blow-up of $\mathbb P^3$ along two disjoint irreducible conics $\mathcal C_1$ and $\mathcal C_2$.
		
		Moreover, for every pair of distinct points
		\[
		[\alpha_1:\beta_1],[\alpha_2:\beta_2]\in\mathbb P^1,
		\]
		there exists a divisor
		\[
		D_{\alpha_1,\beta_1}^{\alpha_2,\beta_2}\in|\xi+2H|
		\]
		such that $\pi|_{D_{\alpha_1,\beta_1}^{\alpha_2,\beta_2}}$ realizes $D_{\alpha_1,\beta_1}^{\alpha_2,\beta_2}$ as the blow-up of $\mathbb P^3$ along two disjoint irreducible conics
		\[
		\mathcal C_{\alpha_1,\beta_1}\subset\mathcal P_{\alpha_1,\beta_1},
		\qquad
		\mathcal C_{\alpha_2,\beta_2}\subset\mathcal P_{\alpha_2,\beta_2},
		\]
		where
		\[
		\mathcal P_{\alpha_i,\beta_i}
		:=
		\{\alpha_i x+\beta_i y=0\}
		\]
		are planes containing $L_0=\{x=y=0\}$, with the same choice of coordinates as in Proposition~\ref{PropositionZ}.		
	\end{lemma}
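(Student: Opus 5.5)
The plan is to translate everything to $\mathbb{P}^3$ through the standard identification $H^0(X,\xi+2H)\simeq H^0(\mathbb{P}^3,\mathcal{E}(2))$.

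\textbf{Dimension.} A nonzero section $t$ of $\mathcal{E}(2)$ corresponds to a divisor $D_t$. The projection $\pi|_{D_t}\colon D_t\to\mathbb{P}^3$ is an isomorphism away from $V(t)$, and over $V(t)$ its fibres are the whole fibre $\mathbf{f}$. When $V(t)$ is a locally complete intersection curve, $D_t$ is the blow-up of $\mathbb{P}^3$ along $V(t)$. To compute $h^0(\mathcal{E}(2))$, I will use the exact sequence of Proposition \ref{PropSectionE(2)}. Since $h^1(\mathcal{O})=0$, it gives
\[
h^0(\mathcal{E}(2))=1+h^0(I_{\mathcal{C}}(3)).
\]
Two disjoint conics impose $14$ conditions on cubics, because $h^1(I_{\mathcal{C}}(3))=h^1(\mathcal{E}(2))=0$ by \cite[Proposition 2.2]{HS81} and $h^0(\mathcal{O}_{\mathcal{C}}(3))=7+7$. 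Hence $h^0(I_{\mathcal{C}}(3))=20-14=6$, so $h^0(\mathcal{E}(2))=7$ and $|\xi+2H|\simeq\mathbb{P}^6$.

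\textbf{General member.} Proposition \ref{PropSectionE(2)} provides one section vanishing exactly on two disjoint irreducible conics. Being a smooth curve of the expected codimension is an open condition on sections. So the general section has zero locus a smooth curve of degree $c_2(\mathcal{E}(2))=2-2+4=4$. It then remains to show this curve is again a disjoint union of two irreducible conics. I would argue this by openness as well: a flat family degenerating to two disjoint smooth conics has nearby members that are two disjoint smooth conics. Alternatively, the Hartshorne--Sols description shows that every such zero locus is of this form. The isomorphism of $D$ with the blow-up of $\mathbb{P}^3$ along $\mathcal{C}_1\cup\mathcal{C}_2$ then follows from the local description above.

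\textbf{The divisors $D_{\alpha_1,\beta_1}^{\alpha_2,\beta_2}$.} For these I will construct the sections explicitly. Let $s$ be the section of $\mathcal{E}(1)$ from Proposition \ref{PropositionZ}, and consider the sections
\[
s\cdot\ell,\qquad \ell\in H^0(\mathcal{O}(1)).
\]
They form a $4$-dimensional subspace of $H^0(\mathcal{E}(2))$, and they give the reducible divisors $F+\pi^*(\text{plane})$. The sections I need lie outside this subspace. The key step, and the main obstacle, is to produce a conic in each prescribed plane $\mathcal{P}_{\alpha_i,\beta_i}$.

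\emph{Main obstacle.} I will try to do this via Serre's correspondence. Take a disjoint union of two conics $\mathcal{C}_{\alpha_1,\beta_1}\subset\mathcal{P}_{\alpha_1,\beta_1}$ and $\mathcal{C}_{\alpha_2,\beta_2}\subset\mathcal{P}_{\alpha_2,\beta_2}$; disjointness forces each conic to meet $L_0$ in two points not lying on the other conic. The bundle $\mathcal{E}(2)$ determines an extension class in
\[
\Ext^1(I_{\mathcal{C}}(3),\mathcal{O})\simeq H^0(\omega_{\mathcal{C}}(1)).
\]
Since $\omega_{\mathcal{C}}(1)$ restricts to $\mathcal{O}_{\mathbb{P}^1}$ on each conic, it is trivial on $\mathcal{C}$, and a nowhere-vanishing extension class, one with nonzero constants on both components, gives a rank 2 bundle. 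I must then check that this bundle is isomorphic to $\mathcal{E}(2)$ for a suitable choice of conics.

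\emph{First approach.} Use the group of automorphisms of $\mathbb{P}^3$ preserving $I_Z$, which acts on the pencil of planes through $L_0$. A computation in the coordinates of Proposition \ref{PropositionZ} should show that restricting $t\in H^0(\mathcal{E}(2))$ to $\mathcal{P}_{\alpha,\beta}$ yields a $1$-dimensional condition for $V(t)$ to acquire a conic in that plane. Imposing it for two distinct planes cuts a nonempty open locus in the $\mathbb{P}^6$.

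\emph{Fallback.} If this fails, I would argue from the geometry. The planes of the two conics of a general $D$ vary with $D$ in a family dominating $\mathrm{Sym}^2$ of the pencil, since both have dimension $2$ and the map is nonconstant. Irreducibility and smoothness of the conics would then be preserved on an open set, and I would extend to all pairs of distinct points by a closedness and dimension argument.
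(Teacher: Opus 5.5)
Your computation of $h^0(\mathcal E(2))=7$ and your treatment of the general member are fine. The latter is in fact more direct than the paper's route, which deduces the general member from the ``every pair'' statement by an incidence count over $\mathrm{Sym}^2$ of the pencil. Openness around the section of Proposition~\ref{PropSectionE(2)} is enough. So is adjunction: a smooth zero locus $C$ of a section of $\mathcal E(2)$ has $\omega_C\simeq\mathcal O_C(-1)$, which forces every component to be a conic. (The paper's route additionally shows that the planes of the conics pass through $L_0$; the statement does not require this.)

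The genuine gap is the last assertion. For \emph{every} pair of distinct planes $\mathcal P_{\alpha_1,\beta_1},\mathcal P_{\alpha_2,\beta_2}$ through $L_0$, you must produce a section vanishing exactly on two disjoint irreducible conics lying in them. You flag this as the main obstacle, but none of your routes proves it.

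In the first route, imposing ``$V(t)$ contains a conic in $\mathcal P_i$'' for $i=1,2$ cuts out a closed subset $W\subset H^0(\mathcal E(2))$, not an open subset of $\mathbb P^6$. In fact $W$ is a linear subspace, since $h^0(\mathcal E|_{\mathcal P_i})=1$. Moreover, $W$ always contains the $4$-dimensional space $s\cdot H^0(\mathcal O(1))$, because $s|_{\mathcal P_i}$ is divisible by the equation of $L_0$. The members of that space have zero locus $Z\cup(\text{plane})$. What must be proved is that $W$ is strictly larger and that its general member vanishes exactly on two disjoint smooth conics. This needs an explicit computation, because every $t\in W$ restricts on $\mathcal P_i$ to a multiple of one fixed section, so $W$ fails to generate $\mathcal E(2)$ along $\mathcal P_1\cup\mathcal P_2$. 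You do not carry out this computation. It is precisely the input the paper imports from the proof of \cite[Proposition~2.3]{HS81}: for every pair of planes, a subspace of dimension at least $5$ whose general member vanishes on two disjoint irreducible conics in them.

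The symmetry group of $I_Z$ does not spare you this work. It preserves the planes where $h_{\alpha,\beta}$ is a perfect square (Remark~\ref{RemarkTangentLines}), so it cannot act transitively on pairs of planes.

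In the Serre-correspondence route, the step you leave open is the whole statement: that the bundle built from suitable conics in the prescribed planes is isomorphic to the given $\mathcal E(2)$.

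The fallback fails on its own terms, for three reasons.
\begin{itemize}
\item A nonconstant map from an open subset of $\mathbb P^6$ to a surface need not be dominant.
\item The map to $\mathrm{Sym}^2$ of the pencil presupposes that the planes of the conics contain $L_0$, which you have not shown.
\item Even granting dominance, you only get an open dense set of pairs. No closedness argument extends this to all pairs, because being a disjoint union of two irreducible conics is an open condition. Under specialization of the planes, the limiting sections may acquire singular or meeting conics, or fall into $s\cdot H^0(\mathcal O(1))$.
\end{itemize}
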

	
	\begin{proof}
		By \cite[Proposition 2.2]{HS81}, we have
		\(
		\dim H^0(\mathcal E(2))=7.
		\)
		Using the natural identification
		\(
		H^0(X,\mathcal O_X(\xi+2H))
		\simeq
		H^0(\mathbb P^3,\mathcal E(2)),
		\)
		we conclude that $|\xi+2H|$ is six-dimensional.

		In the proof of Proposition~2.3 of \cite{HS81}, the authors prove that there exists a section
		\[
		s\in H^0(\mathbb P^3,\mathcal E(2))
		\]
		whose zero locus is the disjoint union of two irreducible conics
		$\mathcal C_1$ and $\mathcal C_2$, with
		\[
		\mathcal C_1\subset\{x=0\},
		\qquad
		\mathcal C_2\subset\{y=0\}.
		\]
		Via the above identification, $s$ determines a divisor
		$D\in|\xi+2H|$. Since the zero locus of $s$ is
		$\mathcal C_1\sqcup\mathcal C_2$, the morphism
		\[
		\pi|_D\colon D\longrightarrow\mathbb P^3
		\]
		realizes $D$ as the blow-up of $\mathbb P^3$ along
		$\mathcal C_1\sqcup\mathcal C_2$.
		
	We now show that the same construction can be carried out
	with any pair of distinct planes containing $L_0$. More
	precisely, the proof of \cite[Proposition~2.3]{HS81} gives,
	for every pair of distinct points
	\[
	[\alpha_1:\beta_1],[\alpha_2:\beta_2]\in\mathbb P^1,
	\]
	a linear subspace
	\[
	W_{\alpha_1,\beta_1}^{\alpha_2,\beta_2}
	\subset H^0(\mathbb P^3,\mathcal E(2))
	\]
	of dimension at least $5$, whose general section vanishes
	along the disjoint union of two irreducible conics
	\[
	\mathcal C_{\alpha_1,\beta_1}
	\subset\mathcal P_{\alpha_1,\beta_1},
	\qquad
	\mathcal C_{\alpha_2,\beta_2}
	\subset\mathcal P_{\alpha_2,\beta_2}.
	\]
	
	Set
	\[
	Q:=\left((\mathbb P^1\times\mathbb P^1)\setminus\Delta\right)
	/\mathfrak S_2,
	\]
	where $\Delta$ is the diagonal and $\mathfrak S_2$ acts by
	interchanging the two factors. Since
	\[
	\operatorname{Sym}^2\mathbb P^1
	\simeq\mathbb P^2,
	\]
	we have $Q\simeq\mathbb P^2\setminus C$, where
	$C\subset\mathbb P^2$ is a smooth conic. In particular,
	$\dim Q=2$.
	
	The construction in the proof of
	\cite[Proposition~2.3]{HS81} expresses the spaces
	$W_{\alpha_1,\beta_1}^{\alpha_2,\beta_2}$ in terms of
	linear conditions depending algebraically on the prescribed
	planes. Consequently, these spaces form an algebraic family
	over $Q$.
	
	For each $q\in Q$, let $U_q\subset W_q$ be the subset of
	sections whose zero loci are disjoint unions of two
	irreducible conics contained in the two planes parametrized
	by $q$. By the construction in
	\cite[Proposition~2.3]{HS81}, $U_q$ is dense in $W_q$.
	Moreover, the description of the zero loci of sections in $U_q$  implies
	that the subsets $U_q$ are pairwise disjoint.
	
	The resulting incidence family
	\[
	\mathcal U
	:=\bigcup_{q\in Q}\{q\}\times U_q
	\subset Q\times H^0(\mathbb P^3,\mathcal E(2))
	\]
	has dimension at least
	\[
	\dim\mathcal U\geq\dim Q+5=7.
	\]
	Since the projection
	\[
	\mathcal U\longrightarrow H^0(\mathbb P^3,\mathcal E(2))
	\]
	has finite fibers, by the uniqueness of the corresponding
	pair of planes, its image has dimension at least $7$.
	On the other hand,
	\[
	\dim H^0(\mathbb P^3,\mathcal E(2))=7.
	\]
	It follows that the image of $\mathcal U$ is dense in
	$H^0(\mathbb P^3,\mathcal E(2))$. Hence a general section
	of $\mathcal E(2)$ vanishes along the disjoint union of
	two irreducible conics contained in two distinct planes
	through $L_0$. The corresponding general member of
	$|\xi+2H|$ is therefore the blow-up of $\mathbb P^3$
	along these two conics.
	\end{proof}
	
	\begin{remark}\label{RemarkAlphaBeta}
		For every fixed pair of distinct points
		$[\alpha_1:\beta_1],[\alpha_2:\beta_2]\in\mathbb P^1$, the divisor
		$D_{\alpha_1,\beta_1}^{\alpha_2,\beta_2}$ in Lemma~\ref{LemmaE(2)} is not uniquely determined. Indeed, the proof of the lemma gives a linear subspace
		\(
		\mathbb P\!\left(
		W_{\alpha_1,\beta_1}^{\alpha_2,\beta_2}
		\right)
		\subset|\xi+2H|
		\)
		of dimension at least $4$, whose general member is the blow-up of $\mathbb P^3$ along a pair of disjoint irreducible conics contained in the fixed planes
		\[
		\mathcal P_{\alpha_1,\beta_1}
		=
		\{\alpha_1x+\beta_1y=0\},
		\qquad
		\mathcal P_{\alpha_2,\beta_2}
		=
		\{\alpha_2x+\beta_2y=0\}.
		\]

		The planes are therefore fixed, whereas the conics vary. In particular, the two conics are not uniquely determined by the pair of planes. Rather, the admissible conics form linear systems in the respective planes, and a general choice gives a pair of disjoint irreducible conics.
		
		Accordingly, whenever we use the notation
		$D_{\alpha_1,\beta_1}^{\alpha_2,\beta_2}$, we mean a divisor in
		$|\xi+2H|$ obtained from one of the infinitely many admissible pairs of disjoint conics contained in the fixed planes
		$\mathcal P_{\alpha_1,\beta_1}$ and $\mathcal P_{\alpha_2,\beta_2}$.
		
		When the discussion is independent of the choice of
		$[\alpha_i:\beta_i]$, we will omit these parameters and simply write
		$D$ for $D_{\alpha_1,\beta_1}^{\alpha_2,\beta_2}$ and
		$\mathcal P_i$ for
		$\mathcal P_{\alpha_i,\beta_i}$, $i=1,2$.
	\end{remark}

	\paragraph{The geometry of a general member $D$ of $|\xi+2H|$.}
	We now fix any divisor $D$ in $X$ belonging to the linear system $|\xi_{\mathcal{E}(2)}|=|\xi+2H|$, such that $D\simeq \Bl_{\mathcal{C}}\mathbb{P}^3$, and $\pi|_D\colon D\to \mathbb{P}^3$ coincides with the blowup of $\mathbb{P}^3$ along $\mathcal{C}$, where $\mathcal{C}$ is the disjoint union of two irreducible conics $\mathcal{C}_1$ and $\mathcal{C}_2$. The study of the birational geometry of $D$ will provide useful information also on the geometry of $X$, in particular it will allow us to prove that $\mathbf{l}_0$ is an \emph{exceptional line}, and $X$ satisfies the necessary conditions for being related to a Fano 4-fold by a SQM that appear in Lemma \ref{LemmaSQMFano}. 
	
	Let $P$ be the strict transform in $D$ of a general plane in $\mathbb{P}^3$, and denote by $E_i$ the exceptional divisor that maps onto $\mathcal{C}_i$, $i=1,2$. 
	
	Let $\mathit{r}$, $\mathit{r_0}$ be the transforms in $D$ of a general line $L$ and of the line $L_0$ in $\mathbb{P}^3$, respectively. Let $\mathit{e_i}$ be an exceptional fiber over a point of $\mathcal{C}_i$, $i=1,2$.
	In $D$ the following intersection table holds:
	\begin{center}
		\begin{tabular}{|c|c|c|c|}
			\hline
			$\cdot$ & $P$ & $E_1$ & $E_2$ \\
			\hline
			$\mathit{r}$ & $1$ & $0$ & $0$\\
			\hline
			$\mathit{e_1}$ & $0$ & $-1$ & $0$\\
			\hline
			$\mathit{e_2}$ & $0$ & $0$ & $-1$\\
			\hline
		\end{tabular}
	\end{center}
	
	Notice that, by definition, $L_0$ is the only line that meets each conic $\mathcal{C}_i$ in two points (counted with multiplicities). In particular we have $\mathit{r_0}\equiv\mathit{r}-2\mathit{e_1}-2\mathit{e_2}$ in $N_1(D)$.
	
	\begin{lemma}\label{LemmaD}
		The 3-fold $D$ is weak-Fano (i.e. $-K_D$ is big and nef), with Picard number $\rho_D=3$, and we have the following:
		\begin{itemize}
			\item $\Nef(D)=\mathbb{R}_{\ge 0}[P]+\mathbb{R}_{\ge 0}[2P-E_1]+\mathbb{R}_{\ge 0}[2P-E_2]$.
			\item $\NE(D)=\mathbb{R}_{\ge 0}[\mathit{r_0}]+\mathbb{R}_{\ge 0}[\mathit{e_1}]+\mathbb{R}_{\ge 0}[\mathit{e_2}]$. 
		\end{itemize}
		
		Moreover $\mathcal{N}_{D/X}\simeq \mathcal{O}_D(3P-E_1-E_2)$, the curve $\mathbf{l}_0$ is contained in $D$ and is equivalent to $\mathit{r}_0$ in $\NE(D)$. In particular $\mathbf{l}_0$ is the only effective member of its equivalence class in $\NE(X)$. 
		
		Finally, for all irreducible curves $C\ne \mathit{r_0}$ in $D$, we have:
		\begin{itemize}
			\item $-K_{X}\cdot C\ge 1$
			\item $-K_X\cdot C=1$ if and only if $C$ is the transform of a line contained in $\mathcal{P}_i$ that meets the conic $\mathcal{C}_j$ in one point, for $(i,j)\in\{(1,2),(2,1)\}$. 
		\end{itemize}  
	\end{lemma}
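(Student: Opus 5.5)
We view $D=\Bl_{\mathcal C}\mathbb P^3$ as a blow-up of $\mathbb P^3$ along two disjoint smooth conics, and compute everything in the basis $P,E_1,E_2$ of $N^1(D)$. This gives $-K_D=4P-E_1-E_2$, and $\rho_D=3$ is immediate. The normal bundle comes from adjunction: $\mathcal N_{D/X}=\mathcal O_D(D)=(\xi+2H)|_D$. Since $\pi|_D$ is the blow-up along the zero locus of the section $s$ of $\mathcal E(2)$, the restriction $\xi_{\mathcal E(2)}|_D$ is $\pi^*c_1(\mathcal E(2))-\sum E_i=3P-E_1-E_2$. Equivalently, $-K_X|_D=-K_D+D|_D$ forces $-K_X|_D=2(3P-E_1-E_2)-P+\dots$; we check consistency by writing $-K_X=2\xi+5H=2(\xi+2H)+H$. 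Then
\[
-K_X|_D=7P-2E_1-2E_2=(4P-E_1-E_2)+(3P-E_1-E_2),
\]
as it should be.

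Next we identify $\mathbf l_0$. Since $(\xi+2H)\cdot\mathbf l_0=-1<0$, the curve $\mathbf l_0$ lies in $D$. It maps isomorphically onto $L_0$, and $L_0\not\subset\mathcal C$, so $\mathbf l_0$ is the strict transform $r_0$. Its class in $N_1(D)$ is $r-2e_1-2e_2$, because $L_0$ meets each conic in two points; these lie on $L_0=\mathcal P_1\cap\mathcal P_2$. As a check, $-K_X\cdot r_0=7-4-4=-1$. For the Mori cone we show that $P$, $2P-E_1$ and $2P-E_2$ are nef. The system $|2P-E_i|$ is cut by quadrics through $\mathcal C_i$, which include $\mathcal P_i\cup(\text{any plane})$; their base locus is $\mathcal C_i$, so these divisors are globally generated on $D$. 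The dual cone of these three classes is spanned by the curves on which two of them vanish: $e_1$, $e_2$ and $r-2e_1-2e_2$. Since all three are effective ($r_0$, $e_1$, $e_2$), both $\NE(D)$ and $\Nef(D)$ are as stated. We have $-K_D=(2P-E_1)+(2P-E_2)$, which is nef, with $(-K_D)^3>0$ by a standard computation (or because it is the sum of two nef divisors pulled back from the quadric-bundle structures, with positive top self-intersection). Hence $D$ is weak Fano, and $-K_D\cdot r_0=0$. The fact that $\mathbf l_0$ is the only effective curve in its class in $X$ follows from $T\cdot\mathbf l_0=0$ together with $H\cdot\mathbf l_0=1$. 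Any effective curve in that class has $H$-degree $1$ and $T$-degree $0$, so it is an irreducible curve of degree one over a line with $(\xi+2H)$-degree $-1$. Such a curve lies in every $D$, and in $D$ it has class $r_0$, hence equals the strict transform of the unique line meeting each conic twice.

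For the last part, write any irreducible $C\ne r_0$ in $D$ as $C\equiv a\,r_0+b\,e_1+c\,e_2$ with $a,b,c\ge0$. Then
\[
-K_X\cdot C=-a+3b+3c,
\]
since $-K_X\cdot e_i=(7P-2E_1-2E_2)\cdot e_i=2$; this needs a recheck, and in fact we should get $-K_X\cdot e_i=2$ because $e_i$ is a $\pi$-fiber. So the cone generators alone do not give a bound, and the \textbf{main obstacle} is to exclude curves with large $a$ relative to $b,c$. To handle it we use the nef divisors directly. Set $d=P\cdot C$ and $k_i=E_i\cdot C$. Then
\[
-K_X\cdot C=7d-2k_1-2k_2=(2d-k_1)+(2d-k_2)+3d.
\]
For $C$ not contained in $E_1\cup E_2$, the curve $\pi(C)$ has degree $d$ and meets $\mathcal C_i$ with multiplicity $k_i$. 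If $\pi(C)\not\subset\mathcal P_i$, then $k_i\le d$; if $\pi(C)\subset\mathcal P_i$, then $k_i\le 2d$ by Bézout in the plane. Now $\pi(C)$ lies in at most one of the planes unless it equals $L_0$. So we get $-K_X\cdot C\ge 0+d+3d\ge4$, except in low-degree cases that we check by hand. The equality cases are lines $\pi(C)\subset\mathcal P_i$ with $k_i=2$ and $k_j=1$, giving $-K_X\cdot C=1$; an actual line in $\mathcal P_i$ meets $\mathcal C_j$ only at a point of $L_0$, at most once. Curves inside $E_i$, which is a ruled surface over a conic, are treated separately using $E_i|_{E_i}$ and the fiber class $e_i$, which yields $-K_X\cdot C\ge2$.
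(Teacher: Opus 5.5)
Most of your argument is sound, and two steps are cleaner than the paper's. First, you read off $\mathcal N_{D/X}$ directly from $0\to\mathcal O\to\mathcal E(2)\to I_{\mathcal C}(3)\to 0$, which gives $\xi_{\mathcal E(2)}|_D=3P-E_1-E_2$ without the paper's numerical check. Second, you obtain $\NE(D)$ and $\Nef(D)$ by duality from the nef classes $P,2P-E_1,2P-E_2$ and the effective curves $r_0,e_1,e_2$, instead of a curve-by-curve analysis. Three small repairs are needed here:
\begin{itemize}
\item For global generation of $2P-E_i$ you need $I_{\mathcal C_i}=(\ell_i,q_i)$ to be generated in degree $\le 2$, not just that the base locus is $\mathcal C_i$.
\item $|2P-E_i|$ is a birational contraction onto a quadric threefold, not a quadric bundle. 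Bigness of either summand still gives bigness of $-K_D$.
\item In the uniqueness of $\mathbf l_0$, the class of your curve in $N_1(D)$ is not determined by its class in $N_1(X)$. It follows from $7-2k_1-2k_2=-1$ together with $k_i\le 2$, which comes from nefness of $2P-E_i$.
\end{itemize}

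The genuine gap is in the final part, which is the part the paper actually uses later. The identity $7d-2k_1-2k_2=(2d-k_1)+(2d-k_2)+3d$ is false, since the right-hand side equals $7d-k_1-k_2$. So your bound $-K_X\cdot C\ge 4$ for $\pi(C)\subset\mathcal P_i$ is not proved, and it is in fact wrong. It contradicts your own equality case: a line in $\mathcal P_1$ with $k_1=2$ and $k_2=1$ has $-K_X\cdot C=7-4-2=1$. Likewise, $-a+3b+3c$ should be $-a+2b+2c$. The correct identity is $7d-2k_1-2k_2=2(2d-k_1)+2(2d-k_2)-d$, which shows why $-K_X|_D$ is not nef and why no bound comes for free.

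Your ingredients do finish the argument once the arithmetic is fixed:
\begin{itemize}
\item If $\pi(C)\not\subset\mathcal P_1\cup\mathcal P_2$, then $k_1,k_2\le d$ because $(P-E_i)\cdot C\ge 0$. Hence $-K_X\cdot C\ge 3d\ge 3$.
\item If $\pi(C)\subset\mathcal P_i$ with $\pi(C)\ne L_0$, then $k_i\le 2d$ by nefness of $2P-E_i$, and $k_j\le d$. Hence $-K_X\cdot C\ge d$, and the value $1$ occurs only for $d=1$, $k_i=2$, $k_j=1$.
\end{itemize}
The bound $k_j\le d$ is the right tool. The bound $k_j\le 2$ asserted in the paper's case analysis fails for singular plane curves through $\mathcal C_j\cap L_0$; for example, a cubic in $\mathcal P_i$ with a node there has $k_j=3$. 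The conclusion survives, but you should not rely on that bound.

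Finally, the case $C\subset E_i$ must be computed, not asserted. We have $E_1\cong\mathbb F_2$ with negative section $\Gamma^-\equiv 2r-4e_1$. So $C\equiv\delta\Gamma^-+\epsilon e_1$ gives $-K_X\cdot C=6\delta+2\epsilon\ge 2$.
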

	\begin{proof}
		We have $\rho_D=3$.
		Let $[C]\in\NE(D)$ be the class of a curve $C$. Then there exist $\alpha,\beta,\gamma\in\mathbb{R}$ such that $C\equiv\alpha\mathit{r}+\beta\mathit{e_1}+\gamma\mathit{e_2}$. 
		Since $P$ is the pullback of a nef divisor in $\mathbb{P}^3$, $P$ is also nef in $D$, and $\alpha=P\cdot C\in \mathbb{Z}_{\ge 0}$. If $\alpha=0$, then $C$ is contracted to a point in $\mathbb{P}^3$, thus $[C]$ coincides with either $e_1$ or $e_2$. Hence we recover $[C]\in \mathbb{R}_{\ge 0}\mathit{e_1}+\mathbb{R}_{\ge 0}\mathit{e_2}\subset\mathbb{R}_{\ge 0}\mathit{r_0}+\mathbb{R}_{\ge 0}\mathit{e_1}+\mathbb{R}_{\ge 0}\mathit{e_2}$, as wanted.
		
		From now on, we assume $\alpha\ge 1$.
		Suppose $C\subset E_1$. Observe that $E_1\simeq\mathbb{P}_{\mathcal{C}_1}(\mathcal{N}_{\mathcal{C}_1/\mathbb{P}^3}^*)=\mathbb{P}_{\mathcal{C}_1}(\mathcal{O}_{\mathcal{C}_1}(-1)\oplus\mathcal{O}_{\mathcal{C}_1}(-2))\simeq \mathbb{P}_{\mathbb{P}^1}(\mathcal{O}_{\mathbb{P}^1}(-2)\oplus\mathcal{O}_{\mathbb{P}^1}(-4))\simeq \mathbb{F}_2$. Let $\Gamma^-$ be the negative section of $\mathbb{F}_2$, and $e_1$ be a fiber of the $\mathbb{P}^1$-bundle $E_1\to \mathcal{C}_1$. Then $\mathcal{O}_E(1)=[\Gamma^-]-2[e_1]$, and $E_1|_{E_1}=\mathcal{O}_{E_1}(-1)=-[\Gamma^-]+2[e_1]$. Moreover  $\NE(E_1)=\mathbb{R}_{\ge0}[\Gamma^-]+\mathbb{R}_{\ge0}[e_1]$. By intersecting $\Gamma^-$ with $P$, $E_1$, $E_2$, we get $\Gamma^-\equiv 2\mathit{r}-4\mathit{e_1}\equiv 2\mathit{r_0}+4\mathit{e_2}$ in $N_1(X)$. Therefore if $C\equiv\delta\Gamma^{-}+\epsilon e_1$ is in $\NE(E_1)$, then $\delta,\epsilon\in\mathbb{Z}_{\ge 0}$, and $C\equiv2\delta\mathit{r}+(-4\delta+\epsilon)\mathit{e_1}\equiv 2\delta\mathit{r_0}+\epsilon\mathit{e_1}+4\delta\mathit{e_2}$, hence $[C]\in\mathbb{R}_{\ge 0}[\mathit{r_0}]+\mathbb{R}_{\ge 0}[\mathit{e_1}]+\mathbb{R}_{\ge 0}[\mathit{e_2}]$ in $\NE(D)$. We can argue analogously in the case $C\subset E_2$.

		Now suppose that $C\nsubseteq E_i$, $i=1,2$. Then $\pi(C)$ is an irreducible curve in $\mathbb{P}^3$, and we can assume  $\alpha>0$. Suppose $\beta,\gamma\le 0$. Then $\pi(C)$  meets $\mathcal{C}_1$ in $(-\beta)$ points and $\mathcal{C}_2$ in $(-\gamma)$ points (counted with multiplicities). In particular $\beta,\gamma\in \mathbb{Z}$. There are two cases:
		\begin{itemize}
			\item If $\pi(C)$ is not a plane curve, then $\deg(\pi(C))=\alpha\ge \max\{-\beta,-\gamma\}$.
			\item If $\pi(C)$ is a plane curve, then we have the following possibilities:
			\begin{itemize} 
				\item If $\pi(C)$ is contained in the same plane as $\mathcal{C}_1$, then $-\beta=2\alpha$, $-\gamma\le 2$.
				\item If $\pi(C)$ is contained in the same plane as $\mathcal{C}_2$, then $-\gamma=2\alpha$, $-\beta\le 2$.
				\item Otherwise, $\max\{-\beta, -\gamma\}\le2$
			\end{itemize}
		\end{itemize}
		From the discussion above, it follows that $C\equiv\alpha\mathit{r}+\beta\mathit{e_1}+\gamma\mathit{e_2}$ is an effective curve if and only if the following three conditions are satisfied:
		\begin{align*}
			\begin{cases}
				\alpha\ge 0\\
				-\beta\le 2\alpha\\
				-\gamma\le 2\alpha.
			\end{cases}
		\end{align*}
		We then recover $\NE(D)=\mathbb{R}_{\ge 0}[\mathit{r_0}]+\mathbb{R}_{\ge 0}[\mathit{e_1}]+\mathbb{R}_{\ge 0}[\mathit{e_2}]$. Now,  using the intersection table and the description of $\NE(D)$, one can recover $\Nef(D)=\mathbb{R}_{\ge 0}[P]+\mathbb{R}_{\ge 0}[2P-E_1]+\mathbb{R}_{\ge 0}[2P-E_2]$. 
		
		Observe that $-K_D\equiv4P-E_1-E_2$ is nef. We now prove that it is also big.
		
		One can check that the following holds:
		\begin{align*}
			P^3=1;\quad E_1^3=E_2^3=-6;\quad P^2\cdot E_1=P^2\cdot E_2=0;\quad P\cdot E_1^2=P\cdot E_2^2=-2
		\end{align*}
		and all products involving both $E_1$ and $E_2$ are null. 
		Thus one can compute $(-K_D)^3=36>0$, which implies that $-K_D$ is big. 
		
		Let us now compute the normal bundle $\mathcal{N}_{D/X}$. Recall that $D\equiv\xi+2H$ in $N^1(X)$. Moreover, in $\NE(X)$ we have $\mathit{e_1}=\mathit{e_2}=\mathbf{f}$, and $\mathit{r_0}=\mathbf{l}_0$. Therefore we get $D|_D=3P-E_1-E_2$. 
		
		By adjunction, we then have $-K_{X}|_D\equiv-K_D+D|_D=7P-2E_1-2E_2$. Now, if $C\equiv\alpha\mathit{r}+\beta\mathit{e_1}+\gamma\mathit{e_2}$ is any curve in $D$, we have $-K_{X}\cdot C\equiv7\alpha+2\beta+2\gamma$. Suppose $C$ is irreducible. If $\alpha=0$, then $C\equiv\mathit{e_1}$ or $C\equiv\mathit{e_2}$, and $-K_{X}\cdot C=2$. On the other hand, if $\alpha\ge 1$, then $\pi(C)$ is an irreducible curve in $\mathbb{P}^3$. We have three cases:
		\begin{itemize}
			\item If $C\subset E_1$, then $C\equiv2\delta\mathit{r}+(\epsilon-4\delta)\mathit{e_1}$ for some $\delta,\epsilon\in\mathbb{Z}_{\ge 0}$. Then $-K_{X}\cdot C=14\delta+2(\epsilon-4\delta)=6\delta+2\epsilon\ge 2$. We can argue analogously if $C\subset E_2$. 
			\item If $\pi(C)$ is not a plane curve, then, as above, we conclude that $\alpha\ge\max\{-\beta,-\gamma\}\ge 0$, and we have $-K_{X}\cdot C=7\alpha+2\beta+2\gamma\ge 3\alpha\ge 3$. 
			\item If $\pi(C)$ is a plane curve, then we have three cases.
			\begin{itemize}
				\item If $\pi(C)$ is contained in the same plane as $\mathcal{C}_1$, then $-\beta=2\alpha$, $\gamma\in\{-2,-1,0\}$. In this case  $-K_{X}\cdot C=7\alpha+2\beta+2\gamma=3\alpha+2\gamma$. If $\gamma=-2$, then for $\alpha\ge 2$, we have $-K_{X}\cdot C\ge 2$. Instead, for $\alpha=1$, we recover $C\equiv\mathit{r_0}$. Finally, if $\gamma\ge -1$, then $-K_{X}\cdot C\ge 3\alpha-2\ge 3-2=1$. 
				\item If $\pi(C)$ is contained in the same plane as $\mathcal{C}_2$, then $-\gamma=2\alpha$, $\beta\in\{-2,-1,0\}$. In this case, we also get $-K_{X}\cdot C\ge 1$, unless $C\equiv\mathit{r_0}$. 
				\item Otherwise, $0\le\max\{-\beta, -\gamma\}\le2$. In this case $-K_{X}\cdot C\ge 7\alpha-8$ can be non-positive only for $\alpha=1, \beta=\gamma=-2$. However, in this case $-K_{X}\cdot C=-1$ and $C\equiv\mathit{r_0}$, which is a contradiction since there is only one representative in the class of $\mathit{r}_0$, that is the transform of the line $L_0$ contained in both planes containing the two conics $\mathcal{C}_1$, $\mathcal{C}_2$.
			\end{itemize} 
		\end{itemize} 
		This concludes the proof that $-K_{X}\cdot \mathit{r_0}=-1$ and $-K_{X}\cdot C\ge 1$ for all other irreducible curves $C$. Moreover, one can see numerically as above that $-K_X\cdot C=1$ if and only if $\pi(C)$ is a plane curve and $C\equiv \alpha\mathit r+\beta \mathit{e}_1+\gamma\mathit{e}_2$ such that $\alpha=1$, $(\beta,\gamma)\in\{(-2,-1),(-1,-2)\}$. But this can only happen if $C$ is the transform of a line contained in $\mathcal{P}_i$ that meets the conic $\mathcal{C}_j$ in one point, for $(i,j)\in\{(1,2),(2,1)\}$.
	\end{proof}
	
	\begin{corollary}\label{CorollaryDeg1}
		For $i=1,2$, the strict transform $\Lambda_i$ in $D$ of the plane $\mathcal{P}_i\subset \mathbb{P}^3$ contains an open dense subset $U_i$ such that for every point $p\in U_i$, there exists a curve in $D$ that contains $p$, has anticanonical degree one in $X$ and whose image in $\mathbb{P}^3$ is a line. 
	\end{corollary}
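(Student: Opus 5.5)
The idea is to exhibit the required curves directly, using the last item of Lemma~\ref{LemmaD}. That item says an irreducible curve $C\subset D$, $C\ne \mathit{r_0}$, has $-K_X\cdot C=1$ exactly when $C$ is the strict transform of a line contained in $\mathcal{P}_i$ that meets $\mathcal{C}_j$ in one point, for $\{i,j\}=\{1,2\}$. We treat $i=1$; the case $i=2$ is symmetric.

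\emph{Step 1: the points that $\mathcal{P}_1$ shares with $\mathcal{C}_2$.} Since $\mathcal{C}_2\subset\mathcal{P}_2$, we have $\mathcal{P}_1\cap\mathcal{C}_2\subset \mathcal{P}_1\cap\mathcal{P}_2\cap\mathcal{C}_2 = L_0\cap \mathcal{C}_2$. This is a scheme of length two. Its support consists of one or two points $q\in L_0$.

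\emph{Step 2: the open set $U_1$ and the curves through its points.} Fix such a point $q$. The lines in $\mathcal{P}_1$ through $q$ form a pencil. Every line of the pencil other than $L_0$ meets $\mathcal{C}_2$ only at $q$, with multiplicity one. Indeed, near $q$ the conic $\mathcal{C}_2$ lies in $\mathcal{P}_2$, which is transverse to $\mathcal{P}_1$ along $L_0$. The point needing care is whether $L_0$ is tangent to $\mathcal{C}_2$ at $q$. In that case I would still expect multiplicity one for lines $L\subset\mathcal{P}_1$ with $L\ne L_0$, because $T_q\mathcal{C}_2\subset T_q\mathcal{P}_2$ and $T_qL\cap T_q\mathcal{P}_2 = T_qL_0\cap T_qL=0$ for $L\neq L_0$.

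Such lines $L$ lie in $\mathcal{P}_1$, so they meet $\mathcal{C}_1$ in two points. Their strict transforms $\tilde L$ in $D$ therefore have class $\mathit{r}-2\mathit{e_1}-\mathit{e_2}$, and $-K_X\cdot \tilde L=7-4-2=1$, consistent with Lemma~\ref{LemmaD}.

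The lines of the pencil through $q$ sweep out $\mathcal{P}_1$. Through any point $x\in\mathcal{P}_1\setminus L_0$ passes exactly one of them, namely the line $\overline{qx}$. Now let
\[
U_1:=\Lambda_1\setminus\bigl(\pi|_D^{-1}(L_0)\cup E_1\bigr).
\]
On $\Lambda_1$, the map $\pi|_D$ is an isomorphism away from $L_0\cup\mathcal{C}_1$. Hence $U_1$ is open and dense in the irreducible surface $\Lambda_1\simeq \Bl\mathcal{P}_1$. For $p\in U_1$, the curve $\widetilde{\overline{q\pi(p)}}$ passes through $p$, maps onto a line, and has anticanonical degree one in $X$.

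\emph{Main obstacle.} The delicate step is the multiplicity count in Step 2, i.e.\ checking that $L$ meets $\mathcal{C}_2$ with multiplicity exactly one. We also need $L\not\subset\mathcal{P}_2$, which holds because $L\neq L_0$. If the transversality argument fails at a tangency point, the fallback is to compute $L\cap\mathcal{C}_2$ inside $L\cap\mathcal{P}_2=\{q\}$ scheme-theoretically: this intersection is a reduced point, so the length of $L\cap \mathcal{C}_2$ is at most one, and it is nonempty since it contains $q$.
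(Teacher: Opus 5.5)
Your proposal is correct and follows the paper's argument. In both, the lines in $\mathcal{P}_i$ other than $L_0$ that pass through a point of $\mathcal{C}_j\cap\mathcal{P}_i$ sweep out $\mathcal{P}_i$, and their strict transforms in $D$ have $-K_X$-degree one by Lemma~\ref{LemmaD}. Your fallback bound $\length(L\cap\mathcal{C}_2)\le\length(L\cap\mathcal{P}_2)=1$ is an instance of Lemma~\ref{LemmaLengthInclusions} and settles the multiplicity question cleanly. Your $U_1$ discards everything over $L_0$, which is more cautious than the paper's $U_i=\Lambda_i\setminus\mathbf{l}_0$. That caution is warranted when $\mathcal{P}_1\cap\mathcal{C}_2$ is a single double point: there $\Lambda_1$ is singular, and not every point of its exceptional curve over $q$ lies on the transform of a line through $q$.
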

	\begin{proof}
		Fix $(i,j)\in\{(1,2),(2,1)\}$. Consider all lines in $\mathcal{P}_i$ passing through one of the (at most) two points in $\mathcal{C}_j\cap \mathcal{P}_i$, with the exception of $L_0$. For every point $q$ in the open subset $V_i:=\mathcal{P}_i\setminus\{L_0\}$, there exists one of such lines passing through $q$.  It follows that every point $p$ in the open subset $U_i:=\Lambda_i\setminus\{\mathbf{l}_0\}\subseteq \Lambda_i$ is contained in the transform of one of those lines in $D$.  By Lemma \ref{LemmaD}, such transforms have anticanonical degree $1$.   
	\end{proof}
	    
	   \paragraph{The exceptional line $\mathbf{l}_0$.} 
	    We can now establish the key property of $\mathbf{l}_0$: it is the unique curve in $X$ with negative anticanonical degree, and it has precisely the normal bundle of an exceptional line.
		To this aim, we state an auxiliary lemma that will be useful in the proof of the next proposition. 
	
	\begin{lemma}\label{LemmaLengthInclusions}
		Let $U$ be a scheme over a field $k$. Let $C_1, C_2, W$ be closed subschemes of $U$ such that $C_2\subseteq W$ and $C_1\cap W$ has finite length. Then  
		\begin{align*}
			\length(C_1\cap C_2)\le\length(C_1\cap W).
		\end{align*}  
	\end{lemma}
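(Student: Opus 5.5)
The inclusion $C_2\subseteq W$ of closed subschemes gives a closed immersion $C_1\cap C_2\hookrightarrow C_1\cap W$. The plan is to show that a closed subscheme of a scheme of finite length has finite length, at most that of the ambient scheme. Scheme-theoretically, $C_1\cap C_2=C_1\times_U C_2$ is cut out by the ideal sheaf $\mathcal{I}_{C_1}+\mathcal{I}_{C_2}$, and $C_1\cap W$ by $\mathcal{I}_{C_1}+\mathcal{I}_{W}$. Since $C_2\subseteq W$ means $\mathcal{I}_W\subseteq\mathcal{I}_{C_2}$, we get
\[
\mathcal{I}_{C_1}+\mathcal{I}_W\subseteq\mathcal{I}_{C_1}+\mathcal{I}_{C_2}.
\]
Hence $C_1\cap C_2$ is a closed subscheme of $C_1\cap W$.

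Next I would reduce to commutative algebra. Since $C_1\cap W$ has finite length, it is a zero-dimensional scheme supported at finitely many closed points. Its underlying space is finite and discrete, so it is affine, say $\operatorname{Spec}A$ with $A$ an Artinian $k$-algebra. Here $\length(C_1\cap W)=\length_A(A)$, or equivalently $\dim_k A$ if one measures length over $k$; the argument is the same either way. The closed subscheme $C_1\cap C_2$ then equals $\operatorname{Spec}(A/J)$ for some ideal $J\subseteq A$.

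Length is additive on the exact sequence $0\to J\to A\to A/J\to 0$, which gives
\[
\length_A(A/J)=\length_A(A)-\length_A(J)\le\length_A(A).
\]
The length of $A/J$ as an $A$-module agrees with its length as an $A/J$-module, because submodules coincide. Alternatively, decompose $A=\prod A_{\mathfrak m}$ and argue pointwise, summing over the finitely many points. This yields $\length(C_1\cap C_2)\le\length(C_1\cap W)$, and in particular $C_1\cap C_2$ has finite length.

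None of these steps is genuinely hard. The only point needing care is making precise that "length of a scheme" means $\sum_p\length\mathcal{O}_{Z,p}$ (or $\dim_k H^0(\mathcal O_Z)$). Once that convention is fixed, one has to check that it is monotone under closed immersions, which follows from the surjections $\mathcal{O}_{C_1\cap W,p}\twoheadrightarrow\mathcal{O}_{C_1\cap C_2,p}$ at each point $p$.
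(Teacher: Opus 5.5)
Your proposal is correct and follows essentially the same route as the paper: the inclusion $\mathcal{I}_{C_1}+\mathcal{I}_W\subseteq\mathcal{I}_{C_1}+\mathcal{I}_{C_2}$ gives a surjection $\mathcal{O}_{C_1\cap W}\twoheadrightarrow\mathcal{O}_{C_1\cap C_2}$, and additivity of length concludes. Your reduction to an Artinian ring $A$ and the sequence $0\to J\to A\to A/J\to 0$ only spells out the length convention that the paper leaves implicit.
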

	\begin{proof}
		Let $\mathcal{I}_{C_i}$ be the sheaves of ideals defining $C_i$, $i=1,2$. Let $\mathcal{I}_W$ be the sheaf of ideals defining $W$. Since $C_2$ is a closed subscheme of $W$, we have an immersion $\mathcal{I}_W\hookrightarrow \mathcal{I}_{C_2}$. 
		This induces an immersion $\mathcal{I}_{C_1}+\mathcal{I}_{W}\hookrightarrow \mathcal{I}_{C_1}+\mathcal{I}_{C_2}$, which yields a surjection
		\begin{align*}
			\mathcal{O}_{C_1\cap W}\simeq \frac{\mathcal{O}_U}{\mathcal{I}_{C_1}+\mathcal{I}_{W}}\twoheadrightarrow \frac{\mathcal{O}_U}{\mathcal{I}_{C_1}+\mathcal{I}_{C_2}} \simeq \mathcal{O}_{C_1\cap C_2}.
		\end{align*}
		Since $\mathcal{O}_{C_1\cap C_2}$ is a quotient of $\mathcal{O}_{C_1\cap W}$ and $\length(C_1\cap W)$ is finite, then $\length(C_1\cap C_2)$ is also finite and is at most equal to $\length(C_1\cap W)$, by the additivity of lengths. 
	\end{proof}
	
	We are now ready to prove that $X$ satisfies the necessary conditions for being related to a Fano 4-fold by a SQM appearing in Lemma \ref{LemmaSQMFano}. 
	
	\begin{proposition}\label{PropositionNegCurves}
		In $X$, we have
		\begin{itemize}
			\item $-K_{X}\cdot \mathbf{l}_0=-1$.
			\item $-K_X\cdot C\ge s+1$ for all irreducible curves $C\ne\mathbf{l}_0$ whose intersection with $\mathbf{l}_0$ has length $s\in\mathbb{Z}_{\ge 0}$.  
		\end{itemize}
		Moreover, $\mathcal{N}_{\mathbf{l}_0/X}\simeq\mathcal{O}_{\mathbf{l}_0}(-1)^{\oplus 3}$, hence $\mathbf{l}_0$ is an \emph{exceptional line}.
	\end{proposition}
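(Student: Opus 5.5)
The first item is already established: by Lemma \ref{LemmaRmk2.5} applied to $S_0=\pi^{-1}(L_0)\simeq\mathbb{F}_5$ we have $-K_X\cdot\mathbf{l}_0=4-5=-1$. For the second item I will use the divisors $D\in|\xi+2H|$ of Lemma \ref{LemmaE(2)}. These form a six-dimensional linear system, and every member contains $\mathbf{l}_0$. Let $C\neq\mathbf{l}_0$ be an irreducible curve, and split into two cases.

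\emph{Case 1: $C$ lies in some $D$ that is a blow-up of $\mathbb{P}^3$ along two conics.} Lemma \ref{LemmaD} gives $-K_X\cdot C\ge1$. Inside $D$ I will bound $s$ directly. Write $C\equiv\alpha r+\beta e_1+\gamma e_2$ and use $\mathbf{l}_0=r_0$. Computing $\mathbf{l}_0\cap C$ inside $D$ (a smooth 3-fold), $s$ is at most the length of $C\cap\Lambda_i$ for either plane transform $\Lambda_i\supset r_0$, by Lemma \ref{LemmaLengthInclusions}. Since $\Lambda_i\equiv P-E_i$, this gives
\[
s\le \alpha+\beta \quad\text{or}\quad s\le \alpha+\gamma
\]
when $C\not\subset\Lambda_i$. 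Feeding this into the table $-K_X\cdot C=7\alpha+2\beta+2\gamma$ from Lemma \ref{LemmaD}, I expect $-K_X\cdot C\ge s+1$ by a short case check. Curves inside $\Lambda_i\simeq\mathbb{P}^2$ blown up at the points $\mathcal{C}_j\cap\mathcal{P}_i$ are handled by computing intersections within the surface.

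\emph{Case 2: $C$ lies in no such $D$.} Pick a general $D$ meeting $C$ properly; this is possible since the base locus of $|\xi+2H|$ should be $\mathbf{l}_0$ (plus possibly part of $F$). Then
\[
(\xi+2H)\cdot C\ge \operatorname{length}(D\cap C)\ge s .
\]
Writing $C\equiv a\mathbf{l}_0+b\mathbf{f}$ with $a=H\cdot C\ge0$ and $b=T\cdot C\ge0$, we get $(\xi+2H)\cdot C=b-a$ and $-K_X\cdot C=2b-a$. Hence
\[
-K_X\cdot C\ge s+b\ge s+1
\]
unless $b=0$. If $b=0$, then $C$ is numerically proportional to $\mathbf{l}_0$ and lies in the exceptional locus of the contraction given by $T$, so $C\subset S_0$; this is excluded or handled directly in $\mathbb{F}_5$. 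Fibers $\mathbf{f}$ give $s\le1$ and $-K_X\cdot\mathbf{f}=2$. The main obstacle is controlling the base locus of $|\xi+2H|$ well enough to ensure proper intersection, since some curves may lie in all such $D$. For these I will instead use the two-parameter family of divisors $D^{\alpha_2,\beta_2}_{\alpha_1,\beta_1}$, whose planes vary, together with Corollary \ref{CorollaryDeg1}.

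For the normal bundle, $\mathbf{l}_0\subset S_0\subset X$ with $\mathcal{N}_{\mathbf{l}_0/S_0}=\mathcal{O}(-5)$. Alternatively, use $\mathbf{l}_0=r_0\subset D$, where $D=\Bl_{\mathcal{C}}\mathbb{P}^3$ and $r_0$ is the transform of a line meeting the blown-up conics in four points. This gives
\[
\mathcal{N}_{r_0/D}\simeq\mathcal{O}(-1)^{\oplus2}
\]
(from $\mathcal{O}(1)^{\oplus 2}$ twisted down by two points on each summand, after checking that the two planes give the splitting). Moreover $\mathcal{N}_{D/X}|_{\mathbf{l}_0}=\mathcal{O}(3-2-2)=\mathcal{O}(-1)$, so
\[
0\to\mathcal{O}(-1)^{\oplus 2}\to\mathcal{N}_{\mathbf{l}_0/X}\to\mathcal{O}(-1)\to0 .
\]
This sequence splits because $\operatorname{Ext}^1(\mathcal{O}(-1),\mathcal{O}(-1))=H^1(\mathcal{O})=0$, yielding $\mathcal{N}_{\mathbf{l}_0/X}\simeq\mathcal{O}(-1)^{\oplus3}$.
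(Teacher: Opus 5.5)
Your proposal is correct and follows the paper's proof. The paper likewise fixes one $D\simeq\Bl_{\mathcal{C}}\mathbb{P}^3$, uses $D\cdot C\ge s$ (Lemma \ref{LemmaLengthInclusions}) together with $-K_X\equiv 2D+H$ when $C\not\subset D$, runs the numerical analysis of Lemma \ref{LemmaD} when $C\subset D$, and splits $0\to\mathcal{N}_{\mathbf{l}_0/D}\to\mathcal{N}_{\mathbf{l}_0/X}\to\mathcal{N}_{D/X}|_{\mathbf{l}_0}\to 0$ by $\Ext^1$-vanishing.

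Your bound $s\le\Lambda_i\cdot C$ (with $\Lambda_i\equiv P-E_i$ and $\Lambda_1\cap\Lambda_2=\mathbf{l}_0$) is slightly sharper than the paper's $s\le P\cdot C$. Since $-K_X|_D\equiv 3P+2\Lambda_1+2\Lambda_2$ and $\Lambda_i\cdot C=-P\cdot C$ for $C\subset\Lambda_i$, it gives $-K_X\cdot C\ge H\cdot C+2s$ directly. This removes the need for a separate argument for the lines with $\alpha=1$, $\gamma=-1$.

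The two worries in your Case 2 are not real obstacles. First, by the case hypothesis any single such $D$ already meets $C$ properly; curves lying in every $D$ belong to Case 1. Second, $b-a=D\cdot C\ge s\ge 0$ with $(a,b)\ne(0,0)$ forces $b\ge 1$. So your inference ``$b=0\Rightarrow C\subset S_0$'', which is unjustified, is never needed.
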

	\begin{proof}
		Note that $-K_{X}\equiv2\xi+5H=2D+H$. We already computed $-K_X\cdot \mathbf{l}_0=-1$. 
		
		Let $C$ be an irreducible curve in $X$ distinct from $\mathbf{l}_0$, and let $s$ be the length of the intersection $C\cap\mathbf{l}_0$. Suppose first that $C$ is a fiber of $\pi$. Then $s=1$ and $-K_{X}\cdot C=-K_{X}\cdot\mathbf{f}=2\ge s+1$, so we prove the claim. So, from now on we will assume that $C$ is not contracted by $\pi$. 
		
		If $C$ is not contained in the effective divisor $D$, then $D\cdot C\ge s$, by Lemma \ref{LemmaLengthInclusions}. Then $-K_{X}\cdot C=(2D+H)\cdot C\ge s+1$, as wanted.
		
		If $C$ is contained in $D$, then let $\alpha,\beta,\gamma\in \mathbb{R}$ such that $C\equiv\alpha\mathit{r}+\beta\mathit{e_1}+\gamma\mathit{e_2}$, where (see Lemma \ref{LemmaD}) we have 
		\begin{align*}
			\begin{cases}
				\alpha\ge 1\\
				-\beta\le 2\alpha\\
				-\gamma\le 2\alpha.
			\end{cases}
		\end{align*}
		As in the proof of Lemma \ref{LemmaD}, we can compute $-K_{X}\cdot C=7\alpha+2\beta+2\gamma$.
		Observe that $\alpha=H\cdot C=\deg(\pi(C))$.
		
		Suppose first that $C\subset E_1$. Then $C\equiv2\delta\mathit{r}+(\epsilon-4\delta)\mathit{e_1}$ in $\NE(D)$, for some $\delta,\epsilon\in\mathbb{Z}_{\ge 0}$ (again, see the proof of Lemma \ref{LemmaD}). Then $-K_{X}\cdot C=14\delta+2(\epsilon-4\delta)=6\delta+2\epsilon$. Now, the length of the intersection $C\cap\mathbf{l}_0$ is at most equal to the intersection of $C$ with the transform of the plane containing $\mathcal{C}_2$, i.e. $(P-E_2)\cdot C=2\delta$, by Lemma \ref{LemmaLengthInclusions} (recall that $D\cdot C=\length(D\cap C)$ if $D$ is a Cartier divisor, and $C$ is an irreducible curve not contained in it). Observe that $6\delta+2\epsilon\ge 2\delta+1$ for all $\delta\ge 1, \epsilon\ge 0$. This proves that $-K_{X}\cdot C\ge s+1$.  We can argue analogously if $C\subset E_2$. 
		
		Now, if $C$ is not contained in $E_1\cup E_2$, then $C$ is the strict transform of the irreducible curve $\pi(C)$ in $\mathbb{P}^3$. Observe that $\mathbf{l}_0$ is contained in some effective divisor in the linear system of $P$ that does not contain $C$ (the transform of a plane containing $L_0$ but not $\pi(C)$ in $\mathbb{P}^3$). Hence by Lemma \ref{LemmaLengthInclusions} we have $s\le P\cdot C=\alpha$. Therefore, it is enough to prove that $-K_{X}\cdot C\ge \alpha+1$.
		We can go through all possible choices of $C$, as in the proof of Lemma \ref{LemmaD}, and conclude the following:
		\begin{itemize}
			\item If $\pi(C)$ is not coplanar to $\mathcal{C}_1$ nor to $\mathcal{C}_2$, then $\alpha\ge\max\{-\beta,-\gamma\}$, and $-K_{X}\cdot C=7\alpha+2\beta+2\gamma\ge 3\alpha\ge \alpha+1$ for all $\alpha\ge 1$.  
			\item If $\pi(C)$ is contained in the same plane as $\mathcal{C}_1$, then $\beta=-2\alpha$ and $\gamma\in\{-2,-1,0\}$, so that $-K_{X}\cdot C= 3\alpha+2\gamma\ge 3\alpha-4$. Then the inequality $-K_{X}\cdot C\ge \alpha+1$ is true for all $\alpha\ge 2$, and $\gamma\in\{-2,-1,0\}$. If $\alpha=1$, and $\gamma=-2$, the curve $C$ coincides with $\mathbf{l}_0$.
			If $\alpha=1$, and $\gamma=-1$, the curve $C$ coincides with the transform of a line coplanar to $\mathcal{C}_1$ meeting $\mathcal{C}_2$ and $L_0$ in the same point with multiplicity one. Therefore, in this case $C\cap\mathbf{l}_0=\emptyset$. 
			Finally, if $\alpha=1$, $\gamma=0$, then $-K_{X}\cdot C=3\alpha\ge \alpha+1$.    
			\item If $\pi(C)$ is contained in the same plane as $\mathcal{C}_2$, we can argue as above. 
		\end{itemize} 
		This concludes the proof that $-K_X\cdot C\ge s+1$ for all irreducible curves $C\ne\mathbf{l}_0$ in $X$.

		Finally, using the fact that $\mathit{r_0}$ in $D$ is the complete intersection of the transforms of the two planes containing the two conics $\mathcal{C}_1$, $\mathcal{C}_2$, and the fact that, by adjunction, $D\cdot \mathbf{l}_0=-1$, one can prove that $\mathcal{N}_{\mathbf{l}_0/X}\simeq \mathcal{O}_{\mathbf{l}_0}(-1)^{\oplus 3}$. Indeed, consider the short exact sequence 
		\begin{align}\label{SequenceNormal}
			0\to \mathcal{N}_{\mathbf{l}_0/D}\to\mathcal{N}_{\mathbf{l}_0/X}\to \mathcal{N}_{D/X}|_{\mathbf{l}_0}\to 0.
		\end{align}
		We have $\mathcal{N}_{\mathbf{l}_0/D}\simeq \mathcal{O}_{\mathbf{l}_0}(P-E_1)\oplus\mathcal{O}_{\mathbf{l}_0}(P-E_2)\simeq\mathcal{O}_{\mathbf{l}_0}(-1)\oplus\mathcal{O}_{\mathbf{l}_0}(-1)$, and $\mathcal{N}_{D/X}|_{\mathbf{l}_0}\simeq \mathcal{O}_{\mathbf{l}_0}(-1)$. Now, 
		\begin{align*}
			\Ext^1(\mathcal{O}_{\mathbb{P}^1}(-1),\mathcal{O}_{\mathbb{P}^1}(-1)\oplus\mathcal{O}_{\mathbb{P}^1}(-1))\simeq& \Ext^1(\mathcal{O}_{\mathbb{P}^1},(\mathcal{O}_{\mathbb{P}^1}(-1)\oplus\mathcal{O}_{\mathbb{P}^1}(-1))\otimes\mathcal{O}_{\mathbb{P}^1}(1))\\
			\simeq& \Ext^1(\mathcal{O}_{\mathbb{P}^1},\mathcal{O}_{\mathbb{P}^1}\oplus\mathcal{O}_{\mathbb{P}^1})\\
			\simeq& H^1(\mathbb{P}^1,\mathcal{O}_{\mathbb{P}^1}^{\oplus 2})=0
		\end{align*}
		Thus (\ref{SequenceNormal}) splits and $\mathcal{N}_{\mathbf{l}_0/X}\simeq \mathcal{O}_{\mathbf{l}_0}(-1)^{\oplus 3}$.
		
	\end{proof}

	\subsection{The existence of a flip to a Fano 4-fold $X^+$}\label{SectionX+}
	
	In this section we are going to prove that there exists a flip $X\dashrightarrow X^+$, where $X^+$ is a smooth Fano 4-fold, and we are going to describe the geometry of $X^+$.

	\begin{proposition}\label{PropX+}
		The extremal ray $\mathbb{R}_{\ge 0}[\mathbf{l}_0]$ in $\NE(X)$ is contracted by a small extremal contraction $\phi\colon X\to W$. Such extremal contraction induces a flip $X\dashrightarrow X^{+}$, where $X^{+}$ is a smooth Fano 4-fold, with $c_1(X^+)^4=289$, $c_1(X^+)^2c_2(X^+)=166$,  $h^0(-K_{X^+})=63$, $h^{2,2}(X^+)=3$, and $h^{i,j}(X^{+})=0$ for all $i\ne j$.  
	\end{proposition}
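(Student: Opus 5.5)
The plan is to construct the contraction from the nef divisor $T=\xi+3H$, perform the standard flip of an exceptional line, check that the result is Fano using the inequalities of Proposition~\ref{PropositionNegCurves}, and then compute the invariants.

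\textbf{Step 1: the small contraction.} By Lemma~\ref{LemmaTnef}, $T=\xi+3H$ is globally generated, and by Lemma~\ref{LemmaNE} it spans the boundary of $\Nef(X)$ dual to $\mathbb{R}_{\ge0}[\mathbf{l}_0]$. Using the table $\xi^4=3$, $\xi^3H=-1$, $\xi^2H^2=-1$, $\xi H^3=1$, $H^4=0$ (from $c_1=-1$, $c_2=2$), one checks $T^4>0$, so $T$ is big. Let $\phi\colon X\to W$ be the Stein factorization of the morphism defined by $|mT|$ for $m\gg0$.
\begin{itemize}
\item The curves contracted by $\phi$ are exactly the irreducible curves whose class lies in $\mathbb{R}_{\ge0}[\mathbf{l}_0]$, since $T\cdot\mathbf{f}=1>0$.
\item By Lemma~\ref{LemmaD}, $\mathbf{l}_0$ is the only effective curve in its numerical class. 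Any irreducible curve with class $k[\mathbf{l}_0]$ has $-K_X$-degree $-k<0$, so by Proposition~\ref{PropositionNegCurves} it equals $\mathbf{l}_0$.
\item Hence $\operatorname{Exc}(\phi)=\mathbf{l}_0$, so $\phi$ is a small extremal contraction. It is $K$-positive, since $K_X\cdot\mathbf{l}_0=1$.
\end{itemize}

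\textbf{Step 2: the flip.} Since $\mathcal{N}_{\mathbf{l}_0/X}\simeq\mathcal{O}(-1)^{\oplus3}$ (Proposition~\ref{PropositionNegCurves}), I will run the standard construction.
\begin{itemize}
\item Let $\sigma\colon\hat X\to X$ be the blow-up of $\mathbf{l}_0$. Its exceptional divisor is $G\simeq\mathbf{l}_0\times\mathbb{P}^2\simeq\mathbb{P}^1\times\mathbb{P}^2$, with $G|_G=\mathcal{O}(-1,-1)$.
\item By the Fujiki–Nakano criterion, $G$ can be blown down along the other ruling to a smooth $X^+$. The image of $G$ is a plane $L\simeq\mathbb{P}^2$ with $\mathcal{N}_{L/X^+}\simeq\mathcal{O}(-1)^{\oplus2}$.
\item Both maps factor through $W$, because $T$ pulls back trivially on the rulings. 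The induced map $X^+\to W$ is small with $K_{X^+}$ anti-ample on the lines of $L$, so $X\dashrightarrow X^+$ is the flip of $\phi$.
\end{itemize}

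\textbf{Step 3: $X^+$ is Fano.} I will show $-K_{X^+}$ is strictly positive on every curve and big, which gives ampleness.
\begin{itemize}
\item For a line $m\subset L$, adjunction gives $-K_{X^+}\cdot m=3-2=1$.
\item For an irreducible curve $C^+\not\subset L$, let $C\subset X$ be its strict transform, with $s=\length(C\cap\mathbf{l}_0)$. Computing on $\hat X$ with $\sigma^*K_X+2G=K_{\hat X}=\sigma^{+*}K_{X^+}+G$, one finds $-K_{X^+}\cdot C^+=-K_X\cdot C-s$. This is at least $1$ by Proposition~\ref{PropositionNegCurves}.
\item Hence $-K_{X^+}$ is nef.
\item For bigness, $(-K_{X^+})^4=(-K_X)^4+1$ under this flip. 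Here $(-K_X)^4=(2\xi+5H)^4=288$, giving $289>0$.
\item By the basepoint-free theorem, $-K_{X^+}$ is semiample and its contraction would contract only $K$-trivial curves. There are none, so $-K_{X^+}$ is ample.
\end{itemize}

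\textbf{Step 4: invariants.} For $c_1^2c_2$, I will compare $\hat X$ with $X$ and with $X^+$ via the blow-up formulas for Chern classes. On $X$, $c_2(X)$ is computed from $c(T_X)=\pi^*c(T_{\mathbb{P}^3})\,c(T_{X/\mathbb{P}^3})$. Then $h^0(-K_{X^+})$ follows from Riemann–Roch on the Fano 4-fold, $\chi(-K)=\tfrac1{12}c_1^2(2c_1^2+c_2)+1$ modulo standard terms, together with Kodaira vanishing.

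For the Hodge numbers, I will use the blow-up formula for cohomology on both sides of $\hat X$. $X$ has only $(p,p)$ classes, since it is a $\mathbb{P}^1$-bundle over $\mathbb{P}^3$, and the centres $\mathbb{P}^1$ and $\mathbb{P}^2$ are also purely $(p,p)$. This gives $h^{i,j}(X^+)=0$ for $i\neq j$. The value of $h^{2,2}(X^+)$ is then obtained by bookkeeping of the shifted cohomology of the centres.

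\textbf{Main obstacle.} I expect the hardest part to be Step 3, namely checking that the length $s$ in Proposition~\ref{PropositionNegCurves} is exactly the intersection number $G\cdot\hat C$ used in the discrepancy computation. This is where multiplicities matter, and where Lemma~\ref{LemmaSQMFano}(b) with multiplicities is needed. The invariant computations in Step 4 are routine but error-prone.
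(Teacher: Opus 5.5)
Your Steps 1 and 3 follow the paper's argument. The contraction $\phi$ is given by $|mT|$ with $T=\xi+3H$, $\Exc(\phi)=\mathbf{l}_0$ by Proposition~\ref{PropositionNegCurves}, and $X^+$ is Fano by $\sigma^{+*}K_{X^+}=\sigma^*K_X+G$ together with $-K_X\cdot C\ge s+1$.

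The other differences are minor, and both routes work:
\begin{itemize}
\item \emph{Bigness.} You get bigness of $-K_{X^+}$ from $(-K_{X^+})^4=288+4-3=289$. The paper instead writes $-K_X=(\xi+4H)+(\xi+H)$ as ample plus effective and uses that bigness survives SQMs.
\item \emph{Invariants.} You compute $c_1^2c_2(X^+)$ by blow-up formulas, then $h^0$ by Riemann--Roch and Kodaira vanishing. The paper computes $\chi(-K_X)=63$ with a formula for $\mathbb{P}^1$-bundles, transports it to $X^+$ by the invariance result of [Cas17], and solves Riemann--Roch for $c_1^2c_2$. The two agree: $c_1^2c_2(X)=168$ drops to $166$.
\end{itemize}
Two points in your write-up need repair, however.

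\textbf{Projectivity of $X^+$.} Fujiki--Nakano is an analytic criterion. It produces $X^+$ only as a compact complex manifold, and nothing in Step 2 shows it is projective. You need projectivity for $X^+\to W$ to be an algebraic flip, and for the basepoint-free theorem in Step 3. Without extremality of $[\mathbf{l}_0]$, such analytic modifications can in fact be non-projective.

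The paper avoids this by working on the projective variety $\hat X$:
\begin{itemize}
\item $-K_{\hat X}|_G\simeq\mathcal{O}(1,2)$ is ample.
\item $\NE(\phi\circ\sigma)$ is the two-dimensional extremal face spanned by the two rulings of $G\simeq\mathbb{P}^1\times\mathbb{P}^2$.
\item Hence the $\mathbb{P}^1$-ruling spans a $K_{\hat X}$-negative extremal ray.
\item The contraction theorem then gives $\sigma^+\colon\hat X\to X^+$ with $X^+$ projective. It is smooth because all fibers have dimension at most one.
\end{itemize}

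\textbf{The ``main obstacle''.} Your proposed resolution is circular: Lemma~\ref{LemmaSQMFano}(b) assumes $X$ is already an SQM of a Fano 4-fold. Moreover, the equality you want is false in general. $G\cdot\hat C$ is the multiplicity of the ideal $\mathcal{I}_{\mathbf{l}_0}\mathcal{O}_C$, computed on the normalization. It equals $s=\length(C\cap\mathbf{l}_0)$ when $C$ is smooth along $C\cap\mathbf{l}_0$, but can exceed it at singular points. For example, if locally $\mathbf{l}_0$ is the $x_1$-axis and $C$ is $t\mapsto(t^3,t^4,t^5,0)$, then $s=3$ while $G\cdot\hat C=4$. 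The paper also writes $-K_{X^+}\cdot C^+=-K_X\cdot C-s$ without comment.

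What makes the argument work is the form of the bounds in the proof of Proposition~\ref{PropositionNegCurves}:
\begin{itemize}
\item Every upper bound for $s$ there comes from Lemma~\ref{LemmaLengthInclusions}, applied to an effective divisor $W\supseteq\mathbf{l}_0$ with $C\not\subset W$ (the divisor $D$, or the pull-back of a plane through $L_0$).
\item For such $W$, $W\cdot C=\sigma^*W\cdot\hat C=\hat W\cdot\hat C+\mathrm{mult}_{\mathbf{l}_0}(W)\,G\cdot\hat C\ge G\cdot\hat C$.
\item The remaining cases are fibers of $\pi$, which are smooth, and curves disjoint from $\mathbf{l}_0$.
\end{itemize}
Hence $-K_X\cdot C\ge G\cdot\hat C+1$, which is exactly what the discrepancy formula needs.
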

	\begin{proof}
		Consider $T:=\xi+3H$. By Lemma \ref{LemmaTnef}, $T$ is nef, and it is not difficult to prove that $T^4>0$, so that $T$ is also big. For $m\in\mathbb{Z}$ big enough, $|mT|$ induces a morphism $\phi\colon X\to W$, that is the extremal contraction associated to the ray $\mathbb{R}_{\ge 0}[\mathbf{l}_0]$. If $C$ is an irreducible curve contracted by $\phi$, then $C\equiv\alpha\mathbf{l}_0$ for some $\alpha>0$, and $-K_{X}\cdot C=-\alpha<0$. But then, by Proposition \ref{PropositionNegCurves}, $C$ can only coincide with $\mathbf{l}_0$. It follows that $\Exc(\phi)=\{\mathbf{l}_0\}$, and $\phi$ is a small contraction. 
		
		Now, consider $\sigma\colon \hat{X}\to X$ the blowup of $\mathbf{l}_0$. The exceptional divisor $E$ of $\sigma$ coincides with $\mathbb{P}_{\mathbf{l}_0}(\mathcal{O}_{\mathbf{l}_0}(1)^{\oplus 3})\simeq \mathbb{P}^1\times\mathbb{P}^2$. Now, $E|_E=\mathcal{O}_{E}(-1)\simeq\mathcal{O}_{\mathbb{P}^1\times\mathbb{P}^2}(-1,-1)$, and $-K_{\hat{X}}|_E=-K_{E}+E|_E= \mathcal{O}_{\mathbb{P}^1\times\mathbb{P}^2}(2,3)\otimes\mathcal{O}_{\mathbb{P}^1\times\mathbb{P}^2}(-1,-1)\simeq\mathcal{O}_{\mathbb{P}^1\times\mathbb{P}^2}(1,2)$. Hence $-K_{\hat{X}}|_E$ is ample. Observe that $E=\Exc(\phi\circ\sigma\colon \hat{X}\to W)$. Call $L_1$ the class of $\{*\}\times h$, where $h$ is a line in $\mathbb{P}^2$, and $L_2$ the class of $\mathbb{P}^1\times\{*\}$ in $N_1(E)$. Those two classes are clearly independent, also when seen in $N_1(\hat{X})$. Moreover, we have $\NE(\phi\circ\sigma)=\mathbb{R}_{\ge 0}L_1+\mathbb{R}_{\ge 0}L_2$, which is an extremal two-dimensional face of $\NE(\hat{X})$. It follows that the ray spanned by $L_2$ is also extremal, and since it is $K$-negative, it is contracted by a morphism $\sigma^{+}\colon \hat{X}\to X^{+}$, that is a divisorial contraction such that $X^{+}$ is smooth, $\Exc(\sigma^{+})=E$ and $\sigma^{+}|_E\colon E\to \mathbb{P}^2$ coincides with the projection onto the second factor of $\mathbb{P}^1\times\mathbb{P}^2$. $X^{+}$ is then the flip of $X$ with respect to $\phi$. 
		
		Now we prove that $X^{+}$ must be Fano. Let $C$ be any irreducible curve in $X$, and let $C^+$ be its transform in $X^{+}$. If $C\cap\mathbf{l}_0=\emptyset$, then $0<-K_{X}\cdot C=-K_{X^+}\cdot C^+$. Suppose now that the schematic length of $C\cap\mathbf{l}_0$ is $s\ge 1$. By standard computations, we get 
		\begin{align*}
			-K_{X^+}\cdot C^+=-K_{X}\cdot C-s.
		\end{align*}
		Now, by Proposition \ref{PropositionNegCurves}, $-K_{X}\cdot C\ge s+1$, therefore $-K_{X^+}$ is strictly nef. 
		
		Recall that $-K_{X}\equiv 2\xi+5H$ is the sum of an ample and an effective divisor (respectively $\xi+4H$ and $\xi+H$, for example), hence it is big. Bigness is stable under SQM, so $-K_{X}^+$ is also big. By the base-point-free theorem, we conclude that $-K_{X^+}$ is ample. 
		
		Let $\phi\colon X\dashrightarrow X^+$ be the resulting flip. We have $X\setminus\dom(\phi)=\{\mathbf{l}_0\}$, and by Lemma \ref{LemmaSQMFano}, $X^+\setminus\dom(\phi^{-1})=\{\mathbf{L}_0\}$, where $\mathbf{L}_0\simeq\mathbb{P}^2$ is an \emph{exceptional plane}, with $\mathcal{N}_{\mathbf{L}_0/X^+}\simeq \mathcal{O}_{\mathbb{P}^2}(-1)^{\oplus 2}$. Notice that $\mathbf{L}_0$ coincides with $\sigma^{+}(E)\subset X^{+}$.  
		
		Let us now compute the invariants of $X^+$. 
		
		Since $-K_X\equiv2\xi+5H$, as in the proof of Lemma \ref{LemmaTnef}, we can compute $(-K_{X})^4=288$. Moreover, we can use the following well known formula (see for example \cite[Proposition 3.4]{Sec23}) for a smooth $\mathbb{P}^1$-bundle onto a 3-fold $\mathbb{P}_Z(\mathcal{E})\to Z$:
		\begin{align*}
			\chi(\mathcal{O}(-K_{\mathbb{P}(\mathcal{E})}))=\chi(\mathcal{O}_{\mathbb{P}(\mathcal{E})})+6K_Z\cdot c_2(\mathcal{E})-\frac{1}{2}(3K_Z^3+3K_Z\cdot c_1(\mathcal{E})^2)-\frac{1}{3}K_Z\cdot c_2(Z)
		\end{align*}
		and compute $\chi(\mathcal{O}(-K_{X}))=63$.

		Now, by \cite[Corollary 3.10]{Cas17}, since $X^+$ is Fano and $X^+\setminus\dom(\phi^{-1})=\{\mathbf{L}_0\}$ has one connected component, we have $(-K_{X}^+)^4=(-K_{X})^4+1=289$ and $\chi(\mathcal{O}(-K_{X^+}))=\chi(\mathcal{O}(-K_{X}))=63$. 
		Using the Hirzebruch-Riemann-Roch theorem, one can additionally compute $c_1(X^+)^2c_2(X^+)=12h^0(-K_{X^+})-2c_1(X^+)^4-12=166$. 
		
		As for the Hodge numbers, one can easily check that $h^{2,2}(X^{+})=3$ and $h^{i,j}(X^+)=0$ for all $i\ne j$ by applying, for example, \cite[Lemma 3.1]{Sec23}. 
		
	\end{proof}
	
	This concludes the proof of Theorem \ref{TheoremNonFanoCase}.
	
	\subsection{The second contraction of $X^+$}
	In the previous sections, we proved that there exists a flip $\mathbb{P}(\mathcal{E})=X\dasharrow X^+$ to a Fano variety $X^+$ with Picard number $\rho_{X^+}=2$. In this section we are going to describe the second extremal contraction of $X^+$, i.e. the one that is not related to the flip.  
		\begin{tcolorbox}[
		title={Roadmap},
		colback=gray!5,
		colframe=black!60,
		boxrule=0.5pt,
		arc=1mm,
		left=2mm,
		right=2mm,
		top=1mm,
		bottom=1mm
		]
		\begin{itemize}
			\item We prove that curves in $X$ with anticanonical degree one are in $1:1$ correspondence with lines in $\mathbb{P}^3$ that are tangent to $Z$ at precisely one point. 
			\item  The divisor $F\in|\xi+H|$ can be described as the blow-up of $\mathbb{P}^3$ along a non-reduced schematic structure supported on $L_0$. We study the geometry of the strict transforms $\Lambda_{\alpha,\beta}$ in $F$ of the planes containing $L_0$. We also note that every such transform is contained in some divisor $D$ in $|\xi+2H|$, and we shed light on its relation with the geometry of $D$ itself.
			\item  We study the geometry of $\hat{X}$, the blow-up of $X$ along $\mathbf{l}_0$, in particular we describe the strict transforms $\hat{\Lambda}_{\alpha,\beta}$ of $\Lambda_{\alpha,\beta}$. 
			\item  We study the geometry of the transforms $\Lambda_{\alpha,\beta}^+$ of the surfaces $\Lambda_{\alpha,\beta}$ under the flip $X\dasharrow X^+$, which we describe in Section \ref{SectionX+}, by studying how the contraction $\sigma^+\colon \hat{X}\to X^+$ acts on $\hat{\Lambda}_{\alpha,\beta}$. 
			\item We prove that those transforms $\Lambda_{\alpha,\beta}^+$ are precisely the 2-dimensional fibers of the second contraction of $X^+$, which is a divisorial contraction of type $(3,1)$. The transform $F^+$ of $F$ in $X^+$ coincides with the exceptional divisor of such contraction.    
		\end{itemize}
	\end{tcolorbox}

	\paragraph{Curves in $X$ with anticanonical degree one.}
	We now further explore the geometry of $X$, starting with the curves in $X$ that have anticanonical degree one.

	\begin{lemma}\label{LemmaDeg1}
		There is a $1:1$ correspondence
		\begin{align*}
			\left\{
			\begin{array}{c}
				\text{Irreducible curves }\tilde L\subset X,\quad
				-K_X\cdot\tilde L=1,\\[-2pt]
				\pi(\tilde L)\neq L_0
			\end{array}
			\right\}
			\quad\rightleftarrows\quad
			\left\{
			\begin{array}{c}
				\text{Lines }L\subset\mathbb P^3,\\[-2pt]
				\text{tangent to }Z\text{ at one point}
			\end{array}
			\right\}
		\end{align*}
		under which $\tilde{L}\subset X$ corresponds to its image $\pi(\tilde{L})$ in $\mathbb{P}^3$, and $L\subset \mathbb{P}^3$ corresponds to the negative section of $\pi^{-1}(L)\simeq \mathbb{F}_3$ in $X$.  
	\end{lemma}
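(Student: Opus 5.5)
We argue numerically on an irreducible curve $\tilde L\subset X$ with $-K_X\cdot\tilde L=1$ and $\pi(\tilde L)\neq L_0$, using the intersection table for $\mathbf{l}_0,\mathbf{f}$ and Lemma \ref{LemmaNE}. The plan has four steps: pin down the class of $\tilde L$, show it lies over a line, identify it as a negative section, and then check the converse and bijectivity.

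First we fix the class. Write $\tilde L\equiv\alpha\mathbf{l}_0+\beta\mathbf{f}$ with $\alpha,\beta\ge 0$. Then $\alpha=H\cdot\tilde L=\deg\pi(\tilde L)$ (times the degree of $\pi|_{\tilde L}$), and $1=-K_X\cdot\tilde L=(2\xi+5H)\cdot\tilde L=-\alpha+2\beta$. Since $-K_X\cdot\mathbf{f}=2$, the curve $\tilde L$ is not a fiber, so $\alpha\ge1$ and $\beta=(1+\alpha)/2$.

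Second, and this is the main step, we show $\alpha=1$ by intersecting with a member $D\in|\xi+2H|$. We have
\[
D\cdot\tilde L=(\xi+2H)\cdot\tilde L=-3\alpha+\beta+2\alpha=\tfrac{1-\alpha}{2}.
\]
There are two cases.
\begin{itemize}
\item[(i)] If some member of $|\xi+2H|$ does not contain $\tilde L$, then $D\cdot\tilde L\ge0$, which forces $\alpha=1$.
\item[(ii)] If $\tilde L$ lies in every member of $|\xi+2H|$, it lies in every $D^{\alpha_2,\beta_2}_{\alpha_1,\beta_1}$ of Lemma \ref{LemmaE(2)}. Lemma \ref{LemmaD} then says $\tilde L$ is the transform of a line in $\mathcal P_i$ meeting $\mathcal C_j$; in particular $\alpha=1$ in this case too. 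Alternatively, this case leads to a contradiction: such a line depends on the chosen pair of planes, and choosing $D$'s with disjoint pairs of planes through $L_0$ would force $\pi(\tilde L)\subset L_0$.
\end{itemize}
This base-locus case is the delicate point. I expect to handle it via Lemma \ref{LemmaD} together with the freedom to vary the planes in Lemma \ref{LemmaE(2)}.

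Third, we identify $\tilde L$ inside $\pi^{-1}(L)$. With $\alpha=1$ we get $\beta=1$, so $\pi|_{\tilde L}$ is birational onto a line $L=\pi(\tilde L)$, and $\tilde L$ is a section of $S=\pi^{-1}(L)\simeq\mathbb F_e$. Write $\tilde L\sim\Gamma^-+b\mathbf f$ in $S$. By Lemma \ref{LemmaRmk2.5} we have $-K_X\cdot\Gamma^-=4-e$, hence $-K_X\cdot\tilde L=4-e+2b$. If $\tilde L\neq\Gamma^-$, then $b\ge e$ (Hartshorne V.2.18), so $-K_X\cdot\tilde L\ge 4+e\ge4$, a contradiction. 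Hence $\tilde L=\Gamma^-$ and $4-e=1$, that is $e=3$. Since $L\neq L_0$, this means $\mathcal E|_L\simeq\mathcal O_L(1)\oplus\mathcal O_L(-2)$. By Proposition \ref{PropositionJumping}(ii), $L$ is exactly a line meeting $L_0$ and tangent to $Z$ at one point.

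Finally, the converse and bijectivity. Given such a line $L$, Proposition \ref{PropositionJumping}(ii) gives $\pi^{-1}(L)\simeq\mathbb F_3$, and Lemma \ref{LemmaRmk2.5} gives $-K_X\cdot\Gamma^-=1$. Because $e=3>0$, the negative section is unique, so the two assignments $\tilde L\mapsto\pi(\tilde L)$ and $L\mapsto\Gamma^-\subset\pi^{-1}(L)$ are mutually inverse.
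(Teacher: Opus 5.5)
Your proposal is correct and follows essentially the paper's argument. You show $(\xi+2H)\cdot\tilde L\ge 0$ using a member $D\in|\xi+2H|$ that is a blow-up of two disjoint conics, treating the case $\tilde L\subset D$ via Lemma \ref{LemmaD}, and deduce $H\cdot\tilde L=1$; you then identify $\tilde L$ as the negative section of $\pi^{-1}(L)\simeq\mathbb F_3$ via Lemma \ref{LemmaRmk2.5} and Proposition \ref{PropositionJumping}. The differences are only cosmetic: you compute $D\cdot\tilde L=\tfrac{1-\alpha}{2}$ in Mori-cone coordinates instead of splitting $-K_X=(\xi+2H)+(\xi+3H)$, and your bound $b\ge e$ for a section $\tilde L\neq\Gamma^-$ of $\mathbb F_e$ rules out that case more directly than the paper's detour through $e=5$ and $L=L_0$.
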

	
	\begin{proof}
		By Proposition \ref{PropositionJumping}, given a line $L\subset \mathbb{P}^3$, $L$ is tangent to $Z$ at one point if and only if $V:=\pi^{-1}(L)\simeq \mathbb{P}(\mathcal{O}_{\mathbb{P}^3}(1)\oplus\mathcal{O}_{\mathbb{P}^3}(-2))\simeq\mathbb{F}_3$.  
		In that case, let $\Gamma^{-}$ be the negative section of $V$. By Lemma \ref{LemmaRmk2.5}, $-K_{X}\cdot \Gamma^-=-K_{\mathbb{P}^3}\cdot L-3=4-3=1$. 
		
		Conversely, let $\tilde{L}$ be an irreducible curve in $X$ such that $-K_X\cdot \tilde{L}=1$. Consider $L:=\pi(\tilde{L})$, with $d:=\deg(L)\ge 1$, and suppose $L\ne L_0$. We first prove that $d=1$ and $L$ is a line. 
		Notice that  $\tilde{L}\cdot(\xi+2H)\ge 0$. Indeed, if we suppose by contradiction that $\tilde{L}\cdot(\xi+2H)<0$, it follows that, in particular, there exists $D\in|\xi+2H|$ as in Remark \ref{RemarkAlphaBeta} such that $\tilde{L}\subset D$ and $D\cdot\tilde{L}<0$. By Lemma \ref{LemmaD}, $\mathcal{N}_{D/X}\simeq 3P-E_1-E_2$. Therefore, if $\tilde{L}\equiv \alpha\mathit{r}+\beta\mathit{e}_1+\gamma\mathit{e}_2$ in $\NE(D)$, then $D\cdot\tilde{L}=3\alpha+\beta+\gamma$. A case-by-case analysis analogous to that in the proof of Lemma \ref{LemmaD} then yields $D\cdot\tilde{L}<0\iff \tilde{L}\equiv \mathit{r}_0$, which cannot happen because $\mathit{r_0}\equiv \mathbf{l}_0$ in $\NE(X)$, and $-K_X\cdot \mathbf{l}_0=-1$.  
		
		Now, observe that 
		\begin{align*}
			1= -K_X\cdot \tilde{L}= (2\xi+5H)\cdot \tilde{L}=(\xi+2H)\cdot\tilde{L}+(\xi+3H)\cdot \tilde{L}.
		\end{align*}
		We just proved that $(\xi+2H)\cdot \tilde{L}\ge 0$, and Lemma \ref{LemmaTnef} yields $(\xi+3H)\cdot \tilde{L}\ge 0$. Thus we have only two possible cases:
		\begin{itemize}
			\item If $(\xi+2H)\cdot\tilde{L}=1$ and  $(\xi+3H)\cdot\tilde{L}=0$ , one can immediately compute $H\cdot\tilde{L}=-1$, a contradiction, because $H$ is nef.
			\item If  $(\xi+2H)\cdot\tilde{L}=0$ and  $(\xi+3H)\cdot\tilde{L}=1$, one obtains $H\cdot \tilde{L}=1$, which means that $\pi(\tilde{L})$ is an integral curve of degree one in $\mathbb{P}^3$, hence a line.      
		\end{itemize} 
		
		This concludes the proof that $L=\pi(\tilde{L})$ must be a line. 
		Set $V:=\pi^{-1}(L)\simeq \mathbb{F}_e$. By Proposition \ref{PropositionJumping}, $e\in\{1,3,5\}$. Let $\alpha,\beta\in \mathbb{Z}_{\ge 0}$ be coefficients such that $\tilde{L}\sim \alpha \Gamma^{-}+\beta\mathbf{f}$ in $V$, where $\mathbf{f}$ is a fiber of the $\mathbb{P}^1$-bundle $V\to L$. If $\alpha=0$, then $-K_{X}\cdot \tilde{L}=2\beta$ is even, a contradiction. Therefore $\alpha\ge 1$. Suppose $\tilde{L}\ne\Gamma^{-}$. Then by Proposition 2.21 \cite{Har77}, $\alpha=1$, $\beta\ge 1$ or $\alpha\ge 2$, $\beta\ge \alpha e$. Observe that $-K_X\cdot \tilde{L}=(4d-e)\alpha+2\beta=4d\alpha+(2\beta-\alpha e)$. If $\alpha\ge 2$, it follows that $-K_X\cdot \tilde{L}\ge 4d\alpha\ge 4$, a contradiction. If $\alpha=1$ and $\beta\ge 1$, then $-K_X\cdot \Gamma^{-}=4-e<4-e+2\beta=-K_X\cdot \tilde{L}=1$. By Proposition \ref{PropositionNegCurves}, it follows that $-K_X\cdot \Gamma^{-}=-1$, hence $e=4+1=5$. But then $\alpha=\beta=1$, and $L=L_0$. On the other hand, if $e\in\{1,3\}$ it follows that $\tilde{L}$ must coincide with $\Gamma^{-}$. Hence we have $1=-K_X\cdot \tilde{L}=4-e$, which implies $e=4-1=3$, and we deduce that $L$ is a line that is tangent to $Z$.       
	\end{proof}
	
	\begin{corollary}\label{CorollaryVandConics}
		For every $[\alpha:\beta]\in\mathbb{P}^1$, consider the plane $\mathcal{P}_{\alpha,\beta}=\{\alpha x+\beta y=0\}$ in $\mathbb{P}^3$. Let $D_{\alpha,\beta}^{\alpha',\beta'}$ be a member of the linear system $|\xi+2H|$ in $X$ as in Lemma \ref{LemmaE(2)}, for some $[\alpha':\beta']\ne[\alpha:\beta]$. Recall that the restriction of $\pi\colon X\to \mathbb{P}^3$ to $D_{\alpha,\beta}^{\alpha',\beta'}$ realizes it as the blow-up of two disjoint conics  $\mathcal{C}_{\alpha,\beta}\subset \mathcal{P}_{\alpha,\beta}$ and $\mathcal{C}_{\alpha',\beta'}\subset\mathcal{P}_{\alpha',\beta'}$. Then we have 
		\[
		V(\beta f+\alpha g)\cap L_0=\mathcal{P}_{\alpha,\beta}\cap\mathcal{C}_{\alpha',\beta'}
		\]
		where $f,g$ are the quadratic forms appearing in Proposition \ref{PropositionZ}. 
	\end{corollary}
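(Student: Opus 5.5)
We prove the equality by showing that both sides are the set of points $p\in L_0$ through which there is a line $L\subset\mathcal P_{\alpha,\beta}$, $L\neq L_0$, such that the negative section of $\pi^{-1}(L)$ has anticanonical degree one. Both sides are subsets of $L_0$: $\mathcal C_{\alpha',\beta'}\subset\mathcal P_{\alpha',\beta'}$, so $\mathcal P_{\alpha,\beta}\cap\mathcal C_{\alpha',\beta'}\subset\mathcal P_{\alpha,\beta}\cap\mathcal P_{\alpha',\beta'}=L_0$. Each has length two: $h_{\alpha,\beta}$ is a quadratic form on $L_0$, and a plane meets the irreducible conic $\mathcal C_{\alpha',\beta'}$, which is not contained in $\mathcal P_{\alpha,\beta}$, in a scheme of length two. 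So once the supports agree, a multiplicity check finishes the proof.

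Fix $D=D_{\alpha,\beta}^{\alpha',\beta'}$ and take $(i,j)$ with $\mathcal P_i=\mathcal P_{\alpha,\beta}$ and $\mathcal C_j=\mathcal C_{\alpha',\beta'}$. Let $q\in\mathcal P_{\alpha,\beta}\cap\mathcal C_{\alpha',\beta'}$ and let $L\subset\mathcal P_{\alpha,\beta}$ be any line through $q$ other than $L_0$. Such an $L$ meets $\mathcal C_j$ only at $q$, and transversally, since $L\not\subset\mathcal P_j$ and $\mathcal P_j\cap\mathcal C_j$ is a length-one scheme at $q$ along $L$. By Lemma~\ref{LemmaD}, the strict transform $\tilde L\subset D$ satisfies $-K_X\cdot\tilde L=1$. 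Lemma~\ref{LemmaDeg1} then says that $L$ is tangent to $Z$ at one point, necessarily $L\cap L_0=q$. By the boxed criterion of Remark~\ref{RemarkTangentLines}, this gives $q\in V(h_{\alpha,\beta})=V(\beta f+\alpha g)\cap L_0$. This proves the inclusion of supports from right to left.

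For the reverse inclusion, let $p\in V(h_{\alpha,\beta})$. Every line $L\subset\mathcal P_{\alpha,\beta}$ through $p$ is tangent to $Z$ at $p$ by Remark~\ref{RemarkTangentLines}. By Lemma~\ref{LemmaDeg1}, the negative section $\Gamma^-_L$ of $\pi^{-1}(L)\simeq\mathbb F_3$ has $-K_X\cdot\Gamma^-_L=1$ and $(\xi+2H)\cdot\Gamma^-_L=0$. I would show $\Gamma^-_L\subset D$ by taking $L$ general among these lines and using $D|_{\pi^{-1}(L)}\equiv\Gamma^-+(\text{fiber class})\cdot 0$. More concretely, $\xi_{\mathcal E(2)}$ restricts on $\mathbb F_3=\mathbb P(\mathcal O(3)\oplus\mathcal O)$ to the class of a section disjoint from or equal to $\Gamma^-$. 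Since $\pi^{-1}(L)\not\subset D$, the curve $D\cap\pi^{-1}(L)$ is a section in $|\Gamma^-+3\mathbf f|$ together with fibers over $L\cap\mathcal C$. Its intersection with $\Gamma^-$ is $0$, so it either contains $\Gamma^-$ or misses it.

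I expect this step to be the main obstacle, and my fallback is a counting argument. By Lemma~\ref{LemmaD}, all curves of $D$ with $-K_X$-degree one are transforms of lines in $\mathcal P_i$ through $\mathcal P_i\cap\mathcal C_j$. By the previous paragraph, these lines are tangent to $Z$ exactly at points of $\mathcal P_i\cap\mathcal C_j$. Now suppose $p\notin\mathcal C_j$. A general $L\ni p$ in $\mathcal P_i$ meets $\mathcal C_i$ in two points and avoids $\mathcal C_j$. Then $\pi^{-1}(L)\cap D$ is the $\mathbf f$-curves over two points plus a section $\Sigma$ of class $\Gamma^-+3\mathbf f-2\mathbf f$, from $D|_{\pi^{-1}(L)}\equiv\xi+2H$. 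Such a section has $\Sigma\cdot\Gamma^-=-3+1<0$, so $\Sigma=\Gamma^-+\mathbf f$ is reducible, containing $\Gamma^-$. This gives a curve in $D$ of degree one not of the listed type, a contradiction. Hence $p\in\mathcal C_j$.

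The supports therefore coincide. For multiplicities, when $h_{\alpha,\beta}$ has a double root, the plane $\mathcal P_{\alpha,\beta}$ meets $L_0$ in a single point of the intersection with $\mathcal C_j$. Tangency of $\mathcal P_{\alpha,\beta}$ to $\mathcal C_j$ follows because both schemes have length two and the same single-point support.
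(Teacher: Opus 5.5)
Your argument is correct and is essentially the paper's. The inclusion $\mathcal P_{\alpha,\beta}\cap\mathcal C_{\alpha',\beta'}\subseteq V(\beta f+\alpha g)\cap L_0$ comes from Lemma~\ref{LemmaD}, Lemma~\ref{LemmaDeg1} and the boxed criterion of Remark~\ref{RemarkTangentLines}, and the reverse inclusion comes from showing that the negative section of $\pi^{-1}(L)\simeq\mathbb F_3$ lies in $D$ and so contradicts the classification of degree-one curves in Lemma~\ref{LemmaD}. The differences are minor: your fallback gets $\Gamma^-\subset D$ from the residual class $\Gamma^-+\mathbf f$, whereas the paper's shorter route (which removes your ``main obstacle'') uses $D\cdot\Gamma^-=0$ together with the fact that $\Gamma^-$ meets the fibres over $L\cap\mathcal C$, which lie in $D$; and your length-two observation upgrades the paper's set-theoretic equality to a scheme-theoretic one.
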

	\begin{proof}
		Fix $[\alpha:\beta]\ne [\alpha',\beta']$ in $\mathbb{P}^1$, and adopt the following simplified notation for the objects described in the statement:
		\begin{itemize}
			\item $D=D_{\alpha,\beta}^{\alpha',\beta'}$;
			\item $\mathcal{P}=\mathcal{P}_{\alpha,\beta}$,
			$\mathcal{P}'=\mathcal{P}_{\alpha',\beta'}$;
			\item $\mathcal{C}=\mathcal{C}_{\alpha,\beta}$, $\mathcal{C}'=\mathcal{C}_{\alpha',\beta'}$;
			\item $V=V(\beta f+\alpha g)\cap L_0$.
		\end{itemize}  
		Let $\ell$ be the transform in $D$ of any line $L\subset \mathcal{P}$ passing through a point in $\mathcal{C}'\cap \mathcal{P}$. Then by Lemma \ref{LemmaD}, we know that $-K_X\cdot \ell =1$, and by Lemma \ref{LemmaDeg1}, $L$ is tangent to $Z$, meaning that $L\cap L_0\subseteq V$ by Remark \ref{RemarkTangentLines}. This proves that $\mathcal{C'}\cap \mathcal{P}\subseteq V$. 
		
		On the other hand, suppose that there exists a point in $V\setminus(\mathcal{C}'\cap \mathcal{P})$, and consider a line $L\subset \mathcal{P}$ passing through it. By Remark \ref{RemarkTangentLines}, $L$ is tangent to $Z$, and Lemma \ref{LemmaDeg1} implies that $\pi^{-1}(L)\simeq\mathbb{F}_3$ and the negative section $\ell \subset \pi^{-1}(L)$ has anticanonical degree 1 in $X$. Then we have:
		\begin{align*}
			1=-K_X\cdot \ell=(2\xi+5H)\cdot \ell\qquad\qquad 1=H\cdot \ell
		\end{align*}
		which implies $(\xi+2H)\cdot \ell=0$. Therefore either $D\cap \ell=\emptyset$ or $\ell\subset D$. Now, $L\cap \mathcal{C}\ne \emptyset$ by Bézout, thus $\ell\cap \pi^{-1}(\mathcal{C})\ne \emptyset $. Since $\pi^{-1}(\mathcal{C})\subset D$, we get $D\cap \ell\ne \emptyset$, thus $\ell\subset D$. In Lemma \ref{LemmaD} we proved that the only irreducible curves in $D$ that have anticanonical degree $1$ are the ones that are realized as the transforms in $D$ of lines in $\mathcal{P}$ passing through one point in $\mathcal{C}'\cap\mathcal{P}$. Therefore $L\cap (V\setminus (\mathcal{C}'\cap\mathcal{P}))=L\cap L_0\subseteq \mathcal{C}'\cap\mathcal{P}$, a contradiction.
		
		Therefore 
		\[
		V=\mathcal{C}'\cap \mathcal{P}.
		\]   
	\end{proof}

	\paragraph{The family of surfaces $\Lambda_{\alpha,\beta}$ in $X$.}	
	The family of degree-one curves described in Lemma \ref{LemmaDeg1} sweeps out certain surfaces $\Lambda_{\alpha,\beta}$. We now identify these surfaces explicitly, and study their behavior under the blow-up of $\mathbf{l}_0$.

	\begin{lemma}\label{LemmaHDHF}
		For every $[\alpha:\beta]\in\mathbb{P}^1$, consider the plane $\mathcal{P}_{\alpha,\beta}=\{\alpha x+\beta y=0\}$ in $\mathbb{P}^3$. Let $D_{\alpha,\beta}^{\alpha',\beta'}$ be a member of the linear system $|\xi+2H|$ in $X$ as in Lemma \ref{LemmaE(2)}, for some $[\alpha':\beta']\ne[\alpha:\beta]$. Consider $F\in|\xi+H|$ as in Lemma \ref{LemmaE(1)}. 
		Recall that the restrictions of $\pi\colon X\to \mathbb{P}^3$ to $D_{\alpha,\beta}^{\alpha',\beta'}$ and $F$ realizes them as blow-ups of subschemes of $\mathbb{P}^3$. Consider the following surfaces in $X$. 
		\begin{itemize}
			\item The transform $\Lambda_{\alpha,\beta}^D$ of $\mathcal{P}_{\alpha,\beta}$ in $D_{\alpha,\beta}^{\alpha',\beta'}$. 
			\item The transform $\Lambda_{\alpha,\beta}^F$ of $\mathcal{P}_{\alpha,\beta}$ in $F$. 
		\end{itemize} 
		We have $\Lambda_{\alpha,\beta}^D=\Lambda_{\alpha,\beta}^F$. 
	\end{lemma}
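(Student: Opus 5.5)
Both surfaces lie over the same plane $\mathcal{P}:=\mathcal{P}_{\alpha,\beta}$, in the threefold $S:=\pi^{-1}(\mathcal{P})\subset X$, which is a $\mathbb{P}^1$-bundle over $\mathcal{P}$. The strategy is to show that $\Lambda^D:=\Lambda^D_{\alpha,\beta}$ and $\Lambda^F:=\Lambda^F_{\alpha,\beta}$ coincide over a dense open subset of $\mathcal{P}$. Since both are irreducible surfaces (closures of strict transforms of $\mathcal{P}$), they are then equal. Away from $L_0$ nothing is exceptional: $\pi|_F$ is an isomorphism over $\mathbb{P}^3\setminus L_0$, because $Z$ is supported on $L_0$. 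Also $\pi|_D$ is an isomorphism over $\mathbb{P}^3\setminus(\mathcal{C}\cup\mathcal{C}')$, where $\mathcal{C}=\mathcal{C}_{\alpha,\beta}$ and $\mathcal{C}'=\mathcal{C}_{\alpha',\beta'}$. So over $U:=\mathcal{P}\setminus(L_0\cup\mathcal{C})$ both surfaces are sections of $S\to\mathcal{P}$: $\Lambda^F|_U=F\cap\pi^{-1}(U)$ and $\Lambda^D|_U=D\cap\pi^{-1}(U)$. Note that $\mathcal{C}'\cap\mathcal{P}\subset L_0$ because the conics are disjoint. It therefore suffices to prove
\[
F\cap\pi^{-1}(p)=D\cap\pi^{-1}(p)\qquad\text{for } p\in U.
\]

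The key tool is Lemma \ref{LemmaDeg1} together with Corollary \ref{CorollaryDeg1} and Corollary \ref{CorollaryVandConics}. Fix $p\in U$ and a point $q\in\mathcal{C}'\cap\mathcal{P}=V(\beta f+\alpha g)\cap L_0$, and let $L=\overline{pq}\subset\mathcal{P}$. By Remark \ref{RemarkTangentLines}, $L$ is tangent to $Z$ at $q$, so by Lemma \ref{LemmaDeg1} the negative section $\ell$ of $\pi^{-1}(L)\simeq\mathbb{F}_3$ satisfies $-K_X\cdot\ell=1$ and $H\cdot\ell=1$. Hence $\xi\cdot\ell=-2$, which gives $(\xi+H)\cdot\ell=-1$ and $(\xi+2H)\cdot\ell=0$.

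From $F\cdot\ell=-1<0$ we get $\ell\subset F$. For $D$, the proof of Corollary \ref{CorollaryVandConics} shows that $\ell$ meets $\pi^{-1}(\mathcal{C})\subset D$, and then $D\cdot\ell=0$ forces $\ell\subset D$. Both $F$ and $D$ are sections of $\pi$ over $p$ (as $p\notin L_0\cup\mathcal{C}\cup\mathcal{C}'$), and $\ell$ meets $\pi^{-1}(p)$ in one point. So this point is simultaneously $F\cap\pi^{-1}(p)$ and $D\cap\pi^{-1}(p)$, which proves the displayed equality. Taking closures then gives $\Lambda^D=\Lambda^F$.

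The delicate point is the nonemptiness of $\mathcal{C}'\cap\mathcal{P}$, and the claim that $\ell$ really lies in $D$ rather than being disjoint from it. The first follows from Bézout in $\mathcal{P}'$, since $\mathcal{C}'$ meets the line $\mathcal{P}\cap\mathcal{P}'=L_0$. For the second, $\ell$ meets $\pi^{-1}(L\cap\mathcal{C})$ because $\pi|_\ell\colon\ell\to L$ is an isomorphism and $L\cap\mathcal{C}\neq\emptyset$. A final check is needed that the strict transform of $\mathcal{P}$ in each blow-up is exactly the closure of the section over $U$. This is immediate, since $\mathcal{P}$ is not contained in either center.
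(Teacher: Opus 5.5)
Your proof is correct and is essentially the paper's argument. In both, the anticanonical-degree-one curves over the lines of $\mathcal{P}_{\alpha,\beta}$ through $\mathcal{C}_{\alpha',\beta'}\cap\mathcal{P}_{\alpha,\beta}$ satisfy $F\cdot\ell=-1$, so they lie in $F$, and they sweep the surface away from $L_0$. The only difference is that the paper takes these curves directly as transforms in $D$ via Corollary \ref{CorollaryDeg1}, so your detour through Corollary \ref{CorollaryVandConics}, Lemma \ref{LemmaDeg1} and the check $D\cdot\ell=0$ is unnecessary, since $\ell$ is just the strict transform of $L$ in $D$.

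One small slip: $\mathcal{C}_{\alpha',\beta'}\cap\mathcal{P}_{\alpha,\beta}\subset L_0$ holds because $\mathcal{C}_{\alpha',\beta'}\subset\mathcal{P}_{\alpha',\beta'}$ and $\mathcal{P}_{\alpha,\beta}\cap\mathcal{P}_{\alpha',\beta'}=L_0$, not because the conics are disjoint.
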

	\begin{proof}
		Recall that, by Corollary \ref{CorollaryDeg1}, $\Lambda_{\alpha,\beta}^D$ has an open dense subset $U_{\alpha,\beta}$ such that for every point $p\in U_{\alpha,\beta}$, there exists a curve $C\subset \Lambda_{\alpha,\beta}^D$ such that $p\in C$, $-K_X\cdot C=1$ and $C$ projects onto a line in $\mathbb{P}^3$. Let $C$ be one of those curves. Then 
		\begin{align*}
			(\xi+H)\cdot C= \frac{1}{2}((2\xi+5H)\cdot C - 3H\cdot C)=\frac{1}{2}((-K_X)\cdot C - 3H\cdot C)= \frac{1}{2}(1 - 3)=-1 
		\end{align*}
		which implies that $C\subset F$. It follows that $\Lambda_{\alpha,\beta}^D\subseteq F$. 
		
		On $\mathcal{P}_{\alpha,\beta}$, consider the open subset given by $W:=\mathcal{P}_{\alpha,\beta}\setminus L_0$. Clearly the restriction of the blow-up $\pi|_F$ is the identity on $\pi^{-1}(W)\cap F$, so that 
		\begin{align*}
			(\Lambda_{\alpha,\beta}^D\setminus \pi^{-1}(L_0))\subseteq \pi^{-1}(W)\cap F = \Lambda_{\alpha,\beta}^F\setminus \pi^{-1}(L_0)
		\end{align*} 
		which implies that  $\Lambda_{\alpha,\beta}^D=\Lambda_{\alpha,\beta}^F$ in $F$. 
	\end{proof}

	\begin{notations}
		In light of Lemma \ref{LemmaHDHF}, for any fixed $[\alpha:\beta]\in\mathbb{P}^1$, we will adopt the same notation $\Lambda_{\alpha,\beta}$ for both $\Lambda_{\alpha,\beta}^D$ and $\Lambda_{\alpha,\beta}^F$, because they refer to the same subvariety in $X$. 
	\end{notations}

	\begin{lemma}\label{LemmaHGeometry}
		Let $[\alpha:\beta]\in\mathbb P^1$. If $\alpha\ne0$, then $\Lambda_{\alpha,\beta}$ is realized by
		$\pi|_{\Lambda_{\alpha,\beta}}$ as the blow-up of
		\(
		\mathcal P_{\alpha,\beta}\simeq\mathbb P^2_{[y:z:w]}
		\)
		along the ideal
		\(
		J=(y,\beta f+\alpha g)\subset\mathbb C[y,z,w].
		\)
		If $\beta\ne0$, the analogous statement holds with
		\(
		\mathcal P_{\alpha,\beta}\simeq\mathbb P^2_{[x:z:w]}
		\)
		and
		\(
		J=(x,\beta f+\alpha g)\subset\mathbb C[x,z,w].
		\)
		In particular:
		\begin{itemize}
			\item[(a)] If $(\beta f+\alpha g)|_{L_0}$ is not a perfect square, then
			\(
			\Lambda_{\alpha,\beta}\simeq\Bl_{2\mathrm{pts}}\mathbb P^2.
			\)
			More precisely, $J$ defines two distinct reduced points on $L_0$. The three $(-1)$-curves on $\Lambda_{\alpha,\beta}$ are the two exceptional curves of the blow-up over
			\(
			V(\beta f+\alpha g)\cap L_0
			\)
			and the strict transform of $L_0$ in $F$, namely $\mathbf l_0$.

			\item[(b)] If $(\beta f+\alpha g)|_{L_0}$ is a perfect square, then $J$ defines a non-reduced zero-dimensional subscheme of length two supported at a single point of $L_0$. In this case, $\Lambda_{\alpha,\beta}$ is isomorphic to the blow-up of a quadric cone at a smooth point. Its unique $(-1)$-curve is the strict transform of $L_0$ in $F$, namely $\mathbf l_0$.
		\end{itemize}
		Moreover, $(\beta f+\alpha g)|_{L_0}$ is a perfect square for exactly one or two choices of $[\alpha:\beta]\in\mathbb P^1$.
	\end{lemma}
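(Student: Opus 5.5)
The plan is to use functoriality of blow-ups under closed immersions. By Lemma \ref{LemmaE(1)}, $\pi|_F\colon F\to\mathbb P^3$ is the blow-up of $\mathbb P^3$ along $I_Z=(x^2,xy,y^2,xf-yg)$. By Lemma \ref{LemmaHDHF}, $\Lambda_{\alpha,\beta}=\Lambda^F_{\alpha,\beta}$ is the strict transform of $\mathcal P_{\alpha,\beta}$ in $F$. The strict transform of a closed subvariety under a blow-up is the blow-up of that subvariety along the restricted ideal. Hence $\Lambda_{\alpha,\beta}\simeq \Bl_{I_Z\mathcal O_{\mathcal P_{\alpha,\beta}}}\mathcal P_{\alpha,\beta}$.

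For $\alpha\neq 0$, Remark \ref{RemarkTangentLines} gives $I_Z\mathcal O_{\mathcal P_{\alpha,\beta}}=(y^2,y(\beta f+\alpha g))=y\cdot(y,\beta f+\alpha g)$. Blowing up an ideal of the form $(\text{invertible ideal})\cdot J$ gives the same result as blowing up $J$. So $\Lambda_{\alpha,\beta}\simeq\Bl_J\mathcal P_{\alpha,\beta}$ with $J=(y,\beta f+\alpha g)$. The case $\beta\neq 0$ is symmetric, and for $\alpha=0$ we use $(x^2,xf)=x(x,f)$.

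Modulo $y$, the ideal $J$ is the ideal of the binary quadric $h_{\alpha,\beta}$ on $L_0=\{y=0\}\cap\mathcal P_{\alpha,\beta}$. It is therefore a length-two subscheme of $L_0$: two reduced points if $h_{\alpha,\beta}$ is not a square, and a curvilinear double point tangent to $L_0$ otherwise.

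\emph{Case (a).} $\Lambda_{\alpha,\beta}\simeq\Bl_{2\mathrm{pts}}\mathbb P^2$. Its $(-1)$-curves are the two exceptional curves and the strict transform of the line through the two points, which is $L_0$. That strict transform is $\pi^{-1}(L_0)\cap\Lambda_{\alpha,\beta}$ minus the exceptional curves. It is the unique curve of $\Lambda_{\alpha,\beta}$ mapping isomorphically onto $L_0$, so it is the strict transform of $L_0$ in $F$. To identify it with $\mathbf l_0$, note that $\mathbf l_0\subset F$ (since $(\xi+H)\cdot\mathbf l_0=-2$) and $\pi(\mathbf l_0)=L_0$. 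Moreover, the fibres of $\pi|_F$ over $L_0$ are the projectivized fibres of the blow-up. For the first statement I just need that $\mathbf l_0$ lies in $\Lambda_{\alpha,\beta}$. This follows because $\Lambda_{\alpha,\beta}=\Lambda^D_{\alpha,\beta}\subset D$ and, in $D$, $\mathbf l_0=r_0$ is the strict transform of $L_0\subset\mathcal P_{\alpha,\beta}$ (Lemma \ref{LemmaD}).

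\emph{Case (b).} Blowing up a curvilinear length-two scheme in $\mathbb P^2$ gives a surface with an $A_1$ singularity. Concretely, first blow up the point, then blow up the point of the exceptional curve $e$ corresponding to the tangent direction $L_0$, getting a chain $e'$ ($-2$) and $e''$ ($-1$). The Rees-algebra blow-up contracts $e'$. The strict transform of $L_0$ has self-intersection $1-1-1=-1$, and $e''$ becomes a $(-1/2)$-curve through the node, so $\mathbf l_0$ is the only $(-1)$-curve. To see that this is a blown-up quadric cone, contract the transform of $L_0$ instead. The resulting surface has Picard number one, one $A_1$ point and degree $(-K)^2=9-2+1=8$, so it is the quadric cone. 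I would verify this either by writing $\Bl_J\mathbb P^2$ explicitly in $\mathbb P^2\times\mathbb P^1$ as the surface $\{uy=v h\}$, using local coordinates in which $h=z^2$, or by exhibiting the linear system of conics through $J$.

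Finally, the count of perfect squares is exactly the pencil-versus-conic argument of Remark \ref{RemarkTangentLines}.

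The main obstacle is the rigorous identification in case (b). The required pieces are the singular structure of $\Bl_J$, the fact that no further $(-1)$-curves appear, and the isomorphism with the blown-up quadric cone. I would handle all three by the explicit equation $uy=vz^2$ in local coordinates, which exhibits the $A_1$ point directly.
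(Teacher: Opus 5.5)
Your proposal is correct and follows the paper's proof. You identify $\Lambda_{\alpha,\beta}$ with $\Bl_{I_Z\mathcal O_{\mathcal P_{\alpha,\beta}}}\mathcal P_{\alpha,\beta}$, remove the invertible factor $y$ from $(y^2,y(\beta f+\alpha g))$ (the paper does this via the Rees-algebra isomorphism), split into cases according to whether $h_{\alpha,\beta}$ is a perfect square, and get the count from Remark~\ref{RemarkTangentLines}. Beyond the paper, you give a resolution-and-contraction argument in case (b) (the $(-2)$-curve $e'$, the self-intersection $-1$ of the transform of $L_0$, contraction to the quadric cone) and identify the strict transform of $L_0$ with $\mathbf l_0$ via Lemma~\ref{LemmaD}; the paper only asserts both of these points.
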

	
	\begin{proof}
		Since $\Lambda_{\alpha,\beta}$ is the strict transform of
		$\mathcal P_{\alpha,\beta}$ in
		\(
		F\simeq\Bl_{I_Z}\mathbb P^3,
		\)
		it follows that $\pi|_{\Lambda_{\alpha,\beta}}$ realizes $\Lambda_{\alpha,\beta}$ as
		\(
		\Bl_{I_Z\mathcal O_{\mathcal P_{\alpha,\beta}}}
		\mathcal P_{\alpha,\beta}.
		\)
		
		Suppose first that $\alpha\ne0$. Using coordinates
		$[y:z:w]$ on
		$\mathcal P_{\alpha,\beta}\simeq\mathbb P^2$, we have
		\[
		I_Z\mathcal O_{\mathcal P_{\alpha,\beta}}
		=
		(y^2,y(\beta f+\alpha g)).
		\]
		Put
		\[
		q:=\beta f+\alpha g.
		\]
		Then
		\[
		(y^2,yq)=y(y,q).
		\]
		Since $\mathbb C[y,z,w]$ is a domain, $y$ is a non-zero-divisor. Hence multiplication by $y^n$ induces, for every $n\ge0$, an isomorphism
		\[
		(y,q)^n\xrightarrow{\sim}(y^2,yq)^n,
		\qquad
		h\longmapsto y^n h.
		\]
		These isomorphisms are compatible with multiplication and therefore induce an isomorphism of graded Rees algebras
		\[
		\mathcal R((y,q))
		\simeq
		\mathcal R((y^2,yq)).
		\]
		Consequently,
		\[
		\Bl_{(y^2,yq)}\mathbb P^2
		\simeq
		\Bl_{(y,q)}\mathbb P^2
		\]
		over $\mathbb P^2$. Thus $\Lambda_{\alpha,\beta}$ is the blow-up of
		$\mathcal P_{\alpha,\beta}$ along
		\[
		J=(y,\beta f+\alpha g).
		\]
		
		The case $\beta\ne0$ is analogous, using coordinates
		$[x:z:w]$ on $\mathcal P_{\alpha,\beta}$.
		
		Now assume that $(\beta f+\alpha g)|_{L_0}$ is not a perfect square. Since $J$ contains $y$, its zero-dimensional subscheme is contained in
		$L_0=\{y=0\}$. Its residual equation on $L_0$ is
		\[
		(\beta f+\alpha g)|_{L_0}=0.
		\]
		If this quadratic form is not a square, it has two distinct zeros on $L_0$. Hence $J$ defines two distinct reduced points, and therefore
		\[
		\Lambda_{\alpha,\beta}\simeq\Bl_{2\mathrm{pts}}\mathbb P^2.
		\]
		The two exceptional curves are the fibers of $\pi|_{\Lambda_{\alpha,\beta}}$ over these two points. The strict transform $\mathbf l_0$ of $L_0$ is the third $(-1)$-curve.
		
		If instead $(\beta f+\alpha g)|_{L_0}$ is a perfect square, then its zero scheme consists of a single point with multiplicity two. Thus $J$ defines a non-reduced zero-dimensional subscheme of length two supported at that point. The resulting surface is the one described in the statement, namely the blow-up of a quadric cone at a smooth point. In this case the strict transform $\mathbf l_0$ of $L_0$ is the unique $(-1)$-curve.
		
		The final assertion follows from Remark~\ref{RemarkTangentLines}.

	\end{proof}
	
	\begin{remark}
		For every point $p\in L_0$, there exists a unique
		$[\alpha:\beta]\in\mathbb P^1$ such that
		\(
		p\in V(\beta f+\alpha g)\cap L_0.
		\)
		Moreover,
		\(
		V(\beta f+\alpha g)\cap L_0
		=
		V(\beta' f+\alpha' g)\cap L_0
		\)
		if and only if
		\(
		[\alpha:\beta]=[\alpha':\beta'].
		\)
	\end{remark}
	
	\paragraph{The geometry of $\hat{X}=\Bl_{\mathbf{l}_0}X$}
	
	We are now going to investigate the geometry of the blowup $\hat{X}:=\Bl_{\mathbf{l}_0}X$. 
	
	Let $\sigma\colon \hat{X}\to X$ be the blow-up morphism. If not otherwise specified, we will denote by $E$ the exceptional divisor of $\sigma$ throughout the rest of this section. Notice that, by Proposition \ref{PropositionNegCurves} \begin{align*}E\simeq\mathbb{P}_{\mathbf{l}_0}(\mathcal{N}_{\mathbf{l}_0/X}^{*})\simeq\mathbb{P}_{\mathbf{l}_0}(\mathcal{O}_{\mathbf{l}_0}(1)^{\oplus 3})\simeq \mathbb{P}^1\times\mathbb{P}^2. 
	\end{align*}
	\begin{lemma}\label{LemmaS_0}
		Consider $S_0:=\pi^{-1}(\mathbf{l}_0)\subset X$. Let $\sigma\colon \hat{X}\to X$ be the blowup of $X$ along $\mathbf{l}_0$. Let $E$ be the exceptional divisor of $\sigma$, and denote by $\hat{S}_0$ the strict transform of $S_0$ in $\hat{X}$. Then $\hat{S}_0\cap E$, when seen in $E\simeq \mathbb{P}^1\times\mathbb{P}^2$, is a curve of bidegree $(1,4)$. 
	\end{lemma}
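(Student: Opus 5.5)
The plan is to exploit the fact that $\mathbf{l}_0$ sits inside $S_0=\pi^{-1}(L_0)\simeq\mathbb{F}_5$ as a Cartier divisor, namely its negative section (here $S_0=\pi^{-1}(L_0)=\pi^{-1}(\pi(\mathbf{l}_0))$). Then $\hat{S}_0\cap E$ is read off from the normal bundle $\mathcal{N}_{\mathbf{l}_0/S_0}\simeq\mathcal{O}_{\mathbf{l}_0}(-5)$ and its inclusion into $\mathcal{N}_{\mathbf{l}_0/X}\simeq\mathcal{O}_{\mathbf{l}_0}(-1)^{\oplus 3}$, given by Proposition \ref{PropositionNegCurves}.

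First, since $\mathbf{l}_0\subset S_0$ is a smooth curve in a smooth surface, hence a Cartier divisor, the blow-up of $S_0$ along $\mathbf{l}_0$ is an isomorphism. So the strict transform $\hat{S}_0\simeq S_0$, and the restriction of $\sigma$ to $\hat{S}_0$ is an isomorphism. The scheme $\hat{S}_0\cap E$ is then the exceptional locus of $\Bl_{\mathbf{l}_0}S_0\to S_0$, which is $\mathbb{P}_{\mathbf{l}_0}(\mathcal{N}^*_{\mathbf{l}_0/S_0})\subset\mathbb{P}_{\mathbf{l}_0}(\mathcal{N}^*_{\mathbf{l}_0/X})=E$. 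This is a section of $E\to\mathbf{l}_0$, hence a smooth rational curve $\Sigma$. It is determined by the surjection $\mathcal{N}^*_{\mathbf{l}_0/X}\simeq\mathcal{O}(1)^{\oplus 3}\twoheadrightarrow\mathcal{N}^*_{\mathbf{l}_0/S_0}\simeq\mathcal{O}(5)$, which is dual to the inclusion of normal bundles coming from $0\to\mathcal{N}_{\mathbf{l}_0/S_0}\to\mathcal{N}_{\mathbf{l}_0/X}\to\mathcal{N}_{S_0/X}|_{\mathbf{l}_0}\to0$. Here $\mathcal{N}_{\mathbf{l}_0/S_0}\simeq\mathcal{O}(-5)$ because $(\Gamma^-)^2=-5$ in $\mathbb{F}_5$.

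Next, I compute the bidegree of $\Sigma$ in $E\simeq\mathbb{P}^1\times\mathbb{P}^2$. I write it as $(\Sigma\cdot\mathcal{O}(1,0),\Sigma\cdot\mathcal{O}(0,1))$, where the first factor is $\mathbf{l}_0$. Since $\Sigma$ is a section of the first projection, the first number is $1$. For the second, I use the identification $\mathbb{P}(\mathcal{O}(1)^{\oplus3})\simeq\mathbb{P}(\mathcal{O}^{\oplus 3})=\mathbf{l}_0\times\mathbb{P}^2$, under which $\mathcal{O}_{\mathbb{P}(\mathcal{N}^*)}(1)=\mathcal{O}(1,1)$. A section of $E$ given by a rank-one quotient $\mathcal{N}^*\twoheadrightarrow\mathcal{Q}$ satisfies $\mathcal{O}_{\mathbb{P}(\mathcal{N}^*)}(1)|_\Sigma=\mathcal{Q}$. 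This gives $\Sigma\cdot\mathcal{O}(1,1)=\deg\mathcal{Q}=5$, so $\Sigma\cdot\mathcal{O}(0,1)=4$.

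Equivalently, after twisting by $\mathcal{O}(-1)$ the quotient becomes $\mathcal{O}^{\oplus3}\twoheadrightarrow\mathcal{O}(4)$. This is a morphism $\mathbf{l}_0\to\mathbb{P}^2$ given by three quartics without common zeros, so its graph has bidegree $(1,4)$.

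The main obstacle is bookkeeping rather than geometry. One must check that the conventions match, namely that $E=\mathbb{P}(\mathcal{N}^*)$ parametrizes rank-one quotients of $\mathcal{N}^*$ (normal directions), that the twist gives $\mathcal{O}_E(1)=\mathcal{O}(1,1)$, and that the sub-bundle $\mathcal{O}(-5)\hookrightarrow\mathcal{O}(-1)^{\oplus3}$ is saturated. Saturation holds because the sub-bundle is a sub-bundle, coming from the exact normal bundle sequence of smooth varieties, so the map to $\mathbb{P}^2$ has no base points and has degree exactly $4$. As a consistency check, $E|_E=\mathcal{O}(-1,-1)$ and $\hat{S}_0\cdot$-computations should give $E\cdot\Sigma=\deg\mathcal{N}_{\mathbf{l}_0/S_0}=-5=-(1+4)$, which agrees.
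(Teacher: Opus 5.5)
Your proof is correct and follows the paper's argument: the paper also identifies $\hat{S}_0\cap E$ with $\mathbb{P}(\mathcal{N}^*_{\mathbf{l}_0/S_0})\subset\mathbb{P}(\mathcal{N}^*_{\mathbf{l}_0/X})$, induced by the inclusion $\mathcal{O}_{\mathbf{l}_0}(-5)\hookrightarrow\mathcal{O}_{\mathbf{l}_0}(-1)^{\oplus 3}$, and reads it off as the graph of a map $\mathbb{P}^1\to\mathbb{P}^2$ given by quartics. Your extra bookkeeping makes explicit what the paper leaves implicit: the identification $\mathcal{O}_E(1)=\mathcal{O}(1,1)$, the fact that the sub-bundle is saturated so the quartics have no common zeros, and the check $E\cdot\Sigma=-5$.
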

	\begin{proof}
		We know that $S_0\simeq\mathbb{F}_5$ and $\mathbf{l}_0$ is the negative section of $S_0$, i.e. $\mathbf{l}_0^2=-5$ in $S_0$. This implies that $\mathcal{N}_{\mathbf{l}_0/S_0}\simeq\mathcal{O}_{\mathbf{l}_0}(-5)$. Now, the natural immersion $\mathcal{O}_{\mathbf{l}_0}(-5)\simeq\mathcal{N}_{\mathbf{l}_0/S_0}\hookrightarrow \mathcal{N}_{\mathbf{l}_0/X}\simeq\mathcal{O}_{\mathbf{l}_0}(-1)^{\oplus 3}$ determines the inclusion $\hat{S}_0\cap E\simeq \mathbb{P}_{\mathbf{l}_0}(\mathcal{N}_{\mathbf{l}_0/S_0}^{*})\hookrightarrow \mathbb{P}_{\mathbf{l}_0}(\mathcal{N}_{\mathbf{l}_0/X}^{*})\simeq E$. The image of such inclusion is therefore the graph of a morphism 
		\begin{align*}
			\mathbb{P}^1&\longrightarrow\mathbb{P}^2\\
			[s:t]&\mapsto [\phi_1(s,t):\phi_2(s,t):\phi_3(s,t)]
		\end{align*}
		where $\phi_i(s,t)$ are homogeneous quartic polynomials.
		This proves that $\hat{S}_0\cap E$ is a curve of bidegree $(1,4)$ in $E\simeq\mathbb{P}^1\times\mathbb{P}^2$.  
	\end{proof}
	
	The following proposition will be particularly useful in the proof of Proposition \ref{PropositionContractionX+}.
	
	\begin{proposition}\label{PropositionBlowup}
		Let $\mathcal{P}_{\alpha,\beta}=\{\alpha x+\beta y=0\}$ be any plane containing $L_0=\{x=y=0\}$ in $\mathbb{P}^3$, and consider $\Lambda_{\alpha,\beta}$, the strict transform of $\mathcal{P}_{\alpha,\beta}$ in $F$.  Let $\hat{\Lambda}_{\alpha,\beta}$ be the strict transform of $\Lambda_{\alpha,\beta}$ in $\hat{X}$. Then $\hat{\Lambda}_{\alpha,\beta}\cap E$ coincides with $\mathbb{P}^1\times\{q_{\alpha, \beta}\}\subset \mathbb{P}^1\times\mathbb{P}^2$ when seen in $E\simeq\mathbb{P}^1\times\mathbb{P}^2$, where $q_{\alpha,\beta}$ is a point in $\mathbb{P}^2$, such that $[\alpha:\beta]=[\tilde{\alpha}:\tilde{\beta}]$ if and only if $q_{\alpha,\beta}=q_{\tilde{\alpha},\tilde{\beta}}$.

		Besides, $\hat{\Lambda}_{\alpha,\beta}$ is always isomorphic to $\Lambda_{\alpha,\beta}$ through $\sigma$, and there are only two possibilities:
		\begin{itemize}
			\item[(a)] If $\Lambda_{\alpha,\beta}\simeq\Bl_{2pts}\mathbb{P}^2$, then we have: 
			\begin{itemize}
				\item $\hat{\Lambda}_{\alpha,\beta}\cap E$, when seen as a curve in $\hat{\Lambda}_{\alpha,\beta}\simeq \Bl_{2pts}\mathbb{P}^2$, coincides with the strict transform of the line passing through the two blown-up points.
				\item $\hat{\Lambda}_{\alpha,\beta}\cap \hat{S}_0$ coincides with the transforms in $\hat{X}$ of the two one-dimensional fibers of $\pi\colon X\to \mathbb{P}^3$ above $V(\beta f+\alpha g)\cap L_0$. When seen in $\hat{\Lambda}_{\alpha,\beta}\simeq\Bl_{2pts}\mathbb{P}^2$, $\hat{\Lambda}_{\alpha,\beta}\cap\hat{S}_0$ coincides with the two exceptional divisors above the two blown-up points. 
				\item  The intersection $\hat{S}_0\cap E\cap \hat{\Lambda}_{\alpha,\beta}$ consists of two points, whose images through $\pi\circ\sigma$ coincide with the two points in $V(\beta f+\alpha g)\cap L_0$ in $\mathbb{P}^3$.
			\end{itemize} 
			\item[(b)] If $\Lambda_{\alpha,\beta}$ is isomorphic to the blow-up of a quadric cone at a smooth point, then we have:
			\begin{itemize}
				\item $\hat{\Lambda}_{\alpha,\beta}\cap E$ coincides with the $(-1)$-curve in $\hat{\Lambda}_{\alpha,\beta}$, disjoint from the singular point, whose contraction realizes $\Lambda_{\alpha,\beta}$ as the blow-up of a quadric cone at a smooth point.  
				\item $\hat{\Lambda}_{\alpha,\beta}\cap \hat{S}_0$ coincides with the transform of the one-dimensional fiber of $\pi\colon X\to \mathbb{P}^3$ above $V(\beta f+\alpha g)\cap L_0$, and contains the singular point of $\hat{\Lambda}_{\alpha,\beta}$. 
				\item The intersection $\hat{S}_0\cap E\cap \hat{\Lambda}_{\alpha,\beta}$ consists of one point, whose image through $\pi\circ\sigma$ coincides with $V(\beta f+\alpha g)\cap L_0$ in $\mathbb{P}^3$.
			\end{itemize} 
		\end{itemize} 
	\end{proposition}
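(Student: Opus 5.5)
The plan is to exploit that $\mathbf l_0$ is a Cartier divisor on each surface $\Lambda_{\alpha,\beta}$, since by Lemma \ref{LemmaHGeometry} it is a $(-1)$-curve. In case (b) I also need that $\mathbf l_0$ avoids the singular point of $\Lambda_{\alpha,\beta}$. I would check this directly: the singular point lies on the fiber of $\pi$ over the double point of $J$, in the exceptional curve of $\Bl_J\mathcal P_{\alpha,\beta}$ lying over the non-reduced point. The strict transform of $L_0$ meets that curve only at a smooth point, because $L_0$ cuts the length-two scheme transversally in a single tangent direction.

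Given this, blowing up $X$ along a curve that is a Cartier divisor in a surface $\Lambda\subset X$ does not change the surface. Hence $\sigma|_{\hat\Lambda_{\alpha,\beta}}\colon\hat\Lambda_{\alpha,\beta}\to\Lambda_{\alpha,\beta}$ is an isomorphism. Moreover $\hat\Lambda_{\alpha,\beta}\cap E=\mathbb P_{\mathbf l_0}(\mathcal N^*_{\mathbf l_0/\Lambda_{\alpha,\beta}})$, which is the section of $E\to\mathbf l_0$ determined by the line subbundle $\mathcal N_{\mathbf l_0/\Lambda_{\alpha,\beta}}\simeq\mathcal O(-1)\hookrightarrow\mathcal N_{\mathbf l_0/X}\simeq\mathcal O(-1)^{\oplus3}$. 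After twisting by $\mathcal O(1)$, this becomes an injection $\mathcal O\hookrightarrow\mathcal O^{\oplus3}$ that is a subbundle, hence constant. So it is $\mathbb P^1\times\{q_{\alpha,\beta}\}$, exactly as in the proof of Lemma \ref{LemmaS_0} with degree $0$ in place of $4$.

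Under $\sigma$, the curve $\hat\Lambda_{\alpha,\beta}\cap E$ maps isomorphically onto $\mathbf l_0\subset\Lambda_{\alpha,\beta}$. In case (a) this is the strict transform of the line $L_0$ through the two blown-up points. In case (b) it is the $(-1)$-curve of Lemma \ref{LemmaHGeometry}.

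For injectivity of $[\alpha:\beta]\mapsto q_{\alpha,\beta}$, take $[\alpha:\beta]\neq[\alpha':\beta']$ and a divisor $D=D^{\alpha',\beta'}_{\alpha,\beta}$ as in Lemma \ref{LemmaE(2)}. By Lemma \ref{LemmaHDHF}, both $\Lambda_{\alpha,\beta}$ and $\Lambda_{\alpha',\beta'}$ lie in $D$. As used in the proof of Proposition \ref{PropositionNegCurves}, $\mathbf l_0=r_0$ is their complete intersection in $D$, and $\mathcal N_{\mathbf l_0/D}=\mathcal N_{\mathbf l_0/\Lambda_{\alpha,\beta}}\oplus\mathcal N_{\mathbf l_0/\Lambda_{\alpha',\beta'}}$. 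So the two line subbundles are distinct, and therefore $q_{\alpha,\beta}\neq q_{\alpha',\beta'}$.

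For the intersection with $\hat S_0$, first work away from $E$. There $S_0\cap\Lambda_{\alpha,\beta}$ is $\pi^{-1}(L_0)\cap\Lambda_{\alpha,\beta}$. Since $\Lambda_{\alpha,\beta}$ is a section of $\pi$ over $\mathcal P_{\alpha,\beta}$ outside the blown-up locus, this intersection is $\mathbf l_0$ together with the fibers of $\pi$ over $V(\beta f+\alpha g)\cap L_0$. These fibers are exactly the exceptional curves of $\Lambda_{\alpha,\beta}\to\mathcal P_{\alpha,\beta}$: two of them in case (a), one in case (b), and in case (b) that fiber contains the singular point.

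It remains to find the points of $\hat S_0\cap\hat\Lambda_{\alpha,\beta}\cap E$. A point of $E$ over $p\in\mathbf l_0$ lies on $\hat S_0$ (respectively on $\hat\Lambda_{\alpha,\beta}$) exactly when it is the normal direction of $S_0$ (respectively of $\Lambda_{\alpha,\beta}$) at $p$. The normal direction of $S_0$ at $p$ is spanned by the tangent to the fiber $\pi^{-1}(\pi(p))$. This direction lies in $T_p\Lambda_{\alpha,\beta}$ if and only if $\Lambda_{\alpha,\beta}$ contains that fiber. Otherwise $\pi|_{\Lambda_{\alpha,\beta}}$ is a local isomorphism near $p$, so $T_p\Lambda_{\alpha,\beta}$ has no vertical direction. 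Hence the triple intersection consists precisely of the points over $V(\beta f+\alpha g)\cap L_0$: two points in case (a) and one in case (b). These are also the points where the strict transforms of the exceptional fibers meet $E$. Consequently $\hat S_0\cap\hat\Lambda_{\alpha,\beta}$ is exactly the union of the transforms of those fibers, with no extra isolated points on $E$.

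I expect the main obstacle to be case (b): the local analysis near the non-reduced length-two point. There I would verify, in the explicit Rees-algebra coordinates for $\Bl_{(y,q)}\mathbb P^2$ with $q$ a square on $L_0$, that $\mathbf l_0$ misses the $A_1$ point. I would also check there that the fiber through the singular point is still the only vertical curve meeting $\mathbf l_0$, so that the tangent-direction argument above applies unchanged.
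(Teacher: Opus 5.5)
Your proof is correct. It follows the paper on two points. Both arguments get $\hat\Lambda_{\alpha,\beta}\simeq\Lambda_{\alpha,\beta}$ from $\mathbf{l}_0$ being a smooth Cartier curve on $\Lambda_{\alpha,\beta}$. Both get injectivity of $[\alpha:\beta]\mapsto q_{\alpha,\beta}$ from $\mathbf{l}_0$ being the complete intersection of $\Lambda_{\alpha,\beta}$ and $\Lambda_{\alpha',\beta'}$ in $D^{\alpha',\beta'}_{\alpha,\beta}$, so that the two normal line bundles are distinct summands of $\mathcal{N}_{\mathbf{l}_0/D}$.

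It differs from the paper in two places.

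For the constancy of $\hat\Lambda_{\alpha,\beta}\cap E$, the paper uses the commutative diagram through $D$ to show that the sequence (\ref{NormalSeq}) splits. Your observation that $\mathrm{Hom}(\mathcal{O}(-1),\mathcal{O}(-1)^{\oplus 3})\simeq\mathbb{C}^3$ shows this splitting is automatic. The only input really needed is $\mathbf{l}_0^2=-1$ on $\Lambda_{\alpha,\beta}$.

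For $\hat S_0\cap\hat\Lambda_{\alpha,\beta}\cap E$, the paper argues globally. By Lemma \ref{LemmaS_0}, $\hat S_0\cap E$ has bidegree $(1,4)$, so it meets $\mathbb{P}^1\times\{q_{\alpha,\beta}\}$ in at most four points. Any point besides those on $\hat f_1,\hat f_2$ would lie on some $\hat\Lambda_{\alpha_p,\beta_p}\cap E$ with $[\alpha_p:\beta_p]\ne[\alpha:\beta]$, contradicting injectivity of $q$. You instead compare $T_pS_0=T_p\mathbf{l}_0\oplus T_p\mathbf{f}$ with $T_p\Lambda_{\alpha,\beta}$ point by point. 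This needs neither Lemma \ref{LemmaS_0} nor the injectivity. It still yields what Proposition \ref{PropositionContractionX+} later uses: the point of $\hat S_0\cap E$ over $p\in\mathbf{l}_0$ lies on $\hat\Lambda_{\alpha,\beta}$ exactly when $\pi(p)\in V(\beta f+\alpha g)$.

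You also address something the paper only asserts: that $\mathbf{l}_0$ misses the $A_1$ point in case (b). Your stated reason is wrong, though. Since $y\in J$, the length-two scheme lies on $L_0$, so $L_0$ contains it rather than cutting it transversally. The local check you plan does work. With $J=(y,t^2)$, the blow-up is $\{yB=t^2A\}$, and the singular point sits at $[A:B]=[1:0]$ on the exceptional curve. In the chart $B\ne 0$, the strict transform of $\{y=0\}$ is $\{A/B=0\}$, so it meets that curve at $[0:1]$.
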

	\begin{proof}
		By Lemma \ref{LemmaHGeometry}, the transform $\Lambda_{\alpha,\beta}$ of $\mathcal{P}_{\alpha,\beta}$ in $F\simeq\Bl_{I_Z}\mathbb{P}^3$ is isomorphic to the blow-up of $\mathcal{P}_{\alpha,\beta}$ along the ideal $I_Z\mathcal{O}_{{P}_{\alpha,\beta}}$, which yields two cases. 
		\begin{itemize}
			\item[(a)] For a general $[\alpha:\beta]\in\mathbb{P}^1$, we get $\Lambda_{\alpha,\beta}\simeq \Bl_{2pts}\mathbb{P}^2$. More precisely, in this case $\Lambda_{\alpha,\beta}$ is realized through $\pi$ as the blow-up of $\mathcal{P}_{\alpha,\beta}$ along the two distinct points in $V(\beta f+\alpha g)\cap L_0$. Observe that $\Lambda_{\alpha,\beta}$ contains $\mathbf{l}_0$ as the $(-1)$-curve realized as the transform of the line in $\mathbb{P}^2$ passing through the two blown-up points. Since $\mathbf{l}_0$ is a codimension 1 smooth subvariety of $\Lambda_{\alpha,\beta}$, the transform $\hat{\Lambda}_{\alpha,\beta}$ through the blowup of $\mathbf{l}_0$ is isomorphic to $\Lambda_{\alpha,\beta}$. In particular, $\hat{\Lambda}_{\alpha,\beta}\cap E$ must be a section of the $\mathbb{P}^2$-bundle $E\to \mathbf{l}_0$, and must coincide with the transform of the line passing through the two blown-up points when seen in $\hat{\Lambda}_{\alpha,\beta}\simeq\Bl_{2pts}\mathbb{P}^2$.  
			\item[(b)] If $(\beta f+\alpha g)|_{L_0}$ is a perfect square, we get that $\Lambda_{\alpha,\beta}$ is isomorphic to the blowup of a quadric cone at a smooth point. In that case, the transform $\hat{\Lambda}_{\alpha,\beta}$ is isomorphic to $\Lambda_{\alpha,\beta}$ through $\sigma$ because the blown-up locus $\mathbf{l}_0$ is a smooth codimension 1 subvariety, hence again $\hat{\Lambda}_{\alpha,\beta}\cap E$ must be a section of the $\mathbb{P}^2$-bundle $E\to\mathbf{l}_0$, and coincides with the $(-1)$-curve in $\hat{\Lambda}_{\alpha,\beta}$ whose contraction realizes the surface as the blowup of a quadric cone.  
		\end{itemize}
		
		We are now going to prove that in both cases $\hat{\Lambda}_{\alpha,\beta}\cap E$ coincides with $\mathbb{P}^1\times\{q_{\alpha,\beta}\}$ in $E\simeq \mathbb{P}^1\times\mathbb{P}^2$, where $q_{\alpha,\beta}$ is a point in $\mathbb{P}^2$. From now on, fix any $[\alpha':\beta']\ne [\alpha:\beta]$ and, for simplicity, denote by $D$ the blowup $D_{\alpha,\beta}^{\alpha',\beta'}$ of $\mathbb{P}^3$ along the conics $\mathcal{C}_{\alpha,\beta}\cup\mathcal{C}_{\alpha',\beta'}$. 
		
		First consider the following exact sequence of normal bundles:
		\begin{equation}\label{NormalSeqH}
			\begin{tikzcd}
				0\ar[r]& \mathcal{N}_{\mathbf{l}_0/\Lambda_{\alpha,\beta}}\ar[r,"a"]&\mathcal{N}_{\mathbf{l}_0/D}\ar[r]&\mathcal{N}_{\Lambda_{\alpha,\beta}/D}|_{\mathbf{l}_0}\ar[r]& 0.
			\end{tikzcd}
		\end{equation}
		By studying the geometry of $\Lambda_{\alpha,\beta}$, in both cases $(a)$, $(b)$ one can easily compute $\mathcal{N}_{\mathbf{l}_0/\Lambda_{\alpha,\beta}}\simeq \mathcal{O}_{\mathbf{l}_0}(-1)$. Moreover $\deg(\mathcal{N}_{\Lambda_{\alpha,\beta}/D}|_{\mathbf{l}_0})=\Lambda_{\alpha,\beta}\cdot \mathbf{l}_0$ in $D$. Now, $\Lambda_{\alpha,\beta}$ is equivalent to $P-E_1$ in $N^1(D)$, where $P$ is the transform in $D$ of a general plane in $\mathbb{P}^3$ and $E_1$ is the exceptional divisor above $\mathcal{C}_{\alpha,\beta}$. Hence $\Lambda_{\alpha,\beta}\cdot \mathbf{l}_0=(P-E_1)\cdot\mathit{r_0}=2-3=-1$ (see Lemma \ref{LemmaD} and previous discussions). Hence $\mathcal{N}_{\Lambda_{\alpha,\beta}/D}|_{\mathbf{l}_0}\simeq\mathcal{O}_{\mathbf{l}_0}(-1)$. Since $\Ext^1(\mathcal{O}_{\mathbb{P}^1}(-1),\mathcal{O}_{\mathbb{P}^1}(-1))=0$, the sequence (\ref{NormalSeqH}) splits. 
		
		Now consider the following exact sequence of normal bundles:   
		
		\begin{equation}\label{NormalSeq}
			\begin{tikzcd}
				0\ar[r]& \mathcal{N}_{\mathbf{l}_0/\Lambda_{\alpha,\beta}}\ar[r,"c"]&\mathcal{N}_{\mathbf{l}_0/X}\ar[r]&\mathcal{N}_{\Lambda_{\alpha,\beta}/X}|_{\mathbf{l}_0}\ar[r]& 0.
			\end{tikzcd}
		\end{equation}
		We claim that the above sequence splits so that we obtain $\mathcal{N}_{\Lambda_{\alpha,\beta}/X}|_{\mathbf{l}_0}\simeq \mathcal{O}_{\mathbf{l}_0}(-1)^{\oplus 2}$. Indeed, consider the following commutative diagram: 
		\begin{footnotesize}
			\begin{equation*}
				\begin{tikzcd}
					&&&&&0\\
					0\ar[dr]&&&&\mathcal{N}_{D/X}|_{\mathbf{l}_0}\ar[ur]&\\
					0\ar[r]&\mathcal{N}_{\mathbf{l}_0/\Lambda_{\alpha,\beta}}\ar[rd,"a"]\ar[rr,"c"]&&\mathcal{N}_{\mathbf{l}_0/X}\ar[r]\ar[ur]&\mathcal{N}_{\Lambda_{\alpha,\beta}/X}|_{\mathbf{l}_0}\ar[r]&0\\
					&&\mathcal{N}_{\mathbf{l}_0/D}\ar[ur, "b"]\ar[dr]&&&\\
					&0\ar[ur]&&\mathcal{N}_{\Lambda_{\alpha,\beta}/D}|_{\mathbf{l}_0}\ar[dr]&&&\\
					&&&&0&
				\end{tikzcd}
			\end{equation*}
		\end{footnotesize}
		We know from the proof of Proposition \ref{PropositionNegCurves} that $b$ splits. Moreover $a$ is splitting, by the discussion above. Thus, by the commutativity of the above diagram, we can conclude that $c$ is splitting as well.  
		
		The inclusion $\hat{\Lambda}_{\alpha,\beta}\cap E\hookrightarrow E$ coincides with the morphism \[\mathbb{P}(\mathcal{N}_{\mathbf{l}_0/\Lambda_{\alpha,\beta}}^{*})=\mathbb{P}_{\mathbf{l}_0}(\mathcal{O}_{\mathbf{l}_0}(1))\hookrightarrow \mathbb{P}_{\mathbf{l}_0}(\mathcal{O}_{\mathbf{l}_0}(1)^{\oplus 3})= \mathbb{P}(\mathcal{N}_{\mathbf{l}_0/X}^{*})\]
		induced by (\ref{NormalSeq}). Hence $\hat{\Lambda}_{\alpha,\beta}\cap E\simeq \mathbb{P}^1\times \{q_{\alpha,\beta}\}$ in $E\simeq \mathbb{P}^1\times\mathbb{P}^2$, where $q_{\alpha,\beta}$ is a point in $\mathbb{P}^2$. Notice that, since $\mathcal{N}_{\mathbf{l}_0/\Lambda_{\alpha,\beta}}$ and $\mathcal{N}_{\mathbf{l}_0/\Lambda_{\alpha',\beta'}}$ embed in $\mathcal{N}_{\mathbf{l}_0/X}$ as two distinct direct factors, we have that $\hat{\Lambda}_{\alpha,\beta}\cap E$ is in the same form as $\hat{\Lambda}_{\alpha',\beta'}\cap E$, but $(\hat{\Lambda}_{\alpha,\beta}\cap E)\cap (\hat{\Lambda}_{\alpha',\beta'}\cap E)=\emptyset$, i.e. $q_{\alpha,\beta}\ne q_{\alpha',\beta'}$. Since this argument can be made for any choice of $[\alpha':\beta']\ne[\alpha:\beta]$, we conclude that $q_{\alpha,\beta}=q_{\alpha',\beta'}\iff[\alpha:\beta]=[\alpha':\beta']$. 
		
		As for the intersection $\hat{\Lambda}_{\alpha,\beta}$ with $\hat{S}_0$, suppose we are in case (a). One can see that $S_0\cap \Lambda_{\alpha,\beta}$ is equal to the union of $\mathbf{l}_0$ and the two fibers $f_1,f_2$ of $\pi\colon X\to \mathbb{P}^3$ above the two distinct points in $V(\beta f+\alpha g)\cap L_0$.  It follows that the intersection $\hat{\Lambda}_{\alpha,\beta}\cap\hat{S}_0$ consists of the union of the transforms $\hat{f}_1$, $\hat{f}_2$ of $f_1,f_2$ in $\hat{X}$ and the intersection $(\hat{\Lambda}_{\alpha,\beta}\cap E)\cap (\hat{S}_0\cap E)$ in $E$. Now $\hat{\Lambda}_{\alpha,\beta}\cap E\simeq \mathbb{P}^1\times\{q_{\alpha,\beta}\}$ and $\hat{S}_0\cap E$ is a curve of bidegree $(1,4)$ in $\mathbb{P}^1\times\mathbb{P}^2$, by Lemma \ref{LemmaS_0}. In particular, they meet in at most $4$ points. Two of those points coincide with $\hat{f}_1\cap E$ and $\hat{f}_2\cap E$. We claim that such intersection consists exactly of those two points. Suppose by contradiction that there is an extra point $p$ in said intersection. Notice that $p\in\mathbb{P}^1\times\{q_{\alpha,\beta}\}$. Then in this case there exist $\alpha_p,\beta_p$ such that $\sigma(\pi(p))$ is in $V(\beta_p f+\alpha_p g)\cap L_0$, and $[\alpha:\beta]\ne[\alpha_p:\beta_p]$. That means that by the same reasoning that we previously applied to the pair $(\alpha,\beta)$, we obtain $p\in \hat{\Lambda}_{\alpha_p,\beta_p}\cap E\simeq\mathbb{P}^1\times \{q_{\alpha_p,\beta_p}\}$. But then $p\in(\mathbb{P}^1\times \{q_{\alpha,\beta}\})\cap (\mathbb{P}^1\times \{q_{\alpha_p,\beta_p}\})$, which is non-empty if and only if $q_{\alpha,\beta}=q_{\alpha_p,\beta_p}$, which is equivalent to $[\alpha:\beta]=[\alpha_p:\beta_p]$, a contradiction.
		
		In case (b), by analogous arguments we obtain that $\hat{S}_0\cap \hat{\Lambda}_{\alpha,\beta}$ is equal to the transform $\hat{f}_0$ of the fiber $f_0$ above the only point in $V(\beta f+\alpha g)\cap L_0$. Moreover, $\hat{S}_0\cap \hat{\Lambda}_{\alpha,\beta}$ and $E\cap \hat{\Lambda}_{\alpha,\beta}$ meet in exactly one point, which coincides with $\hat{f}_0\cap E$.  
	\end{proof}
	
	\paragraph{Proof of Theorem \ref{TheoremBir}}.

	We finish with a few remarks about the birational geometry of $X^+$ that will complete the proof of Theorem \ref{TheoremBir}.
	
	Consider any divisor $D\in |\xi+2H|$ as in Lemma \ref{LemmaE(2)}, and denote by $D^+$ its strict transform in $X^{+}$. The flip $X\dashrightarrow X^+$ restricts to a flop $D\dashrightarrow D^+$ with respect to the small contraction of the ray spanned by $\mathit{r_0}$ in $\NE(D)$. Then $D^+$ is a smooth 3-fold, and its intersection with the exceptional plane $\mathbf{L}_0\subset X^+$ resulting from the flip $X\dashrightarrow X^+$ coincides with the flopped rational curve in $D^+$. In $\hat{X}$, we have $\sigma^{*}D-E=\hat{D}=(\sigma^+)^{*}D^+$, where $E$ is the exceptional divisor of $\sigma$ and $\sigma^+$. Hence if $C$ is an irreducible curve in $X$ such that $C\ne\mathbf{l}_0$ and $\length(C\cap \mathbf{l}_0)=s$, we have  $D\cdot C-s=D^+\cdot C^+$, where $C^+$ is the transform of $C$ in $X^+$. Instead, if $\mathbf{l}_0^+$ is a line in the exceptional plane in $X^+$, then we can assume it is contained in $D^+$, and by adjunction we get $D^+\cdot\mathbf{l}_0^+=K_{D^+}\cdot\mathbf{l}_0^+-K_{X^+}\cdot \mathbf{l}_0^+=0+1\ge 0$. Thus if we want to prove that $D^+$ is nef, we just need to check that $D\cdot C\ge s$ for all irreducible curves $C$ distinct from $\mathbf{l}_0$ with $s=\length(C\cap\mathbf{l}_0)$. 
	
	\begin{remark}\label{RemarkF}
		Consider $F\in|\xi+H|$, and $F^+$ its transform in $X^+$. Let $C$ be an irreducible curve in $X$ such that $\length(C\cap \mathbf{l}_0)=s$. Then, by the same arguments used for $D\cdot C$ and $D^+\cdot C^+$, we have $F^+\cdot C^+=F\cdot C-ms$, where $m\ge 1$ is the multiplicity of $F$ along $\mathbf{l}_0$. In particular $F^+\cdot C^+\le F\cdot C-s$.  
	\end{remark}
	
	\begin{lemma}\label{LemmaD+}
		In $X$, if $C$ is an irreducible curve such that $C\ne\mathbf{l}_0$ and $\length(C\cap \mathbf{l}_0)=s$, then $D\cdot C\ge s$. If $D\cdot C=s$, then $C$ is contained in the fixed effective divisor $F\in|\xi+H|$. Moreover, if $C\subset D$, then $D\cdot C=s$ if and only if $C$ lies in the strict transform of one of the planes containing $\mathcal{C}_1$ and $\mathcal{C}_2$.
	\end{lemma}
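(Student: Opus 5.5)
We split according to whether $C$ is contained in $D$ or not, and in the second case according to whether $C\subset F$. Throughout we write $D=D_{\alpha_1,\beta_1}^{\alpha_2,\beta_2}$, where $\mathcal P_1,\mathcal P_2$ are the planes containing $\mathcal C_1,\mathcal C_2$, and we use $D\equiv\xi+2H = F+H$ in $N^1(X)$.

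\emph{Case $C\not\subset D$.} By Lemma \ref{LemmaLengthInclusions}, applied with $W=D\supset\mathbf l_0$, we get $D\cdot C=\length(C\cap D)\ge \length(C\cap\mathbf l_0)=s$. For the equality statement we use $D\equiv F+H$ and aim to show that $C\not\subset F$ forces $D\cdot C>s$. If $C\not\subset F$, then $F\cdot C\ge s$, again by Lemma \ref{LemmaLengthInclusions}, since $\mathbf l_0\subset F$. If moreover $C$ is not a fiber of $\pi$, then $H\cdot C\ge 1$ and $D\cdot C\ge s+1$. If $C$ is a fiber, then $D\cdot C=1$ and $s\le 1$. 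Equality $s=1$ forces $C$ to be a fiber meeting $\mathbf l_0$, and this fiber lies in $S_0$; we must check it lies in $F$. This should hold because $F\supset\pi^{-1}(L_0)\cap F$ contains the exceptional fibers of the blow-up along $I_Z$, which lie over every point of $L_0=\operatorname{Supp} Z$. So fibers over points of $L_0$ are contained in $F$, contradicting $C\not\subset F$. Hence $D\cdot C=s$ with $C\not\subset D$ implies $C\subset F$.

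\emph{Case $C\subset D$.} We reuse the numerics of Lemma \ref{LemmaD} and the proof of Proposition \ref{PropositionNegCurves}. Write $C\equiv\alpha r+\beta e_1+\gamma e_2$, so that $D\cdot C=3\alpha+\beta+\gamma$ since $D|_D=3P-E_1-E_2$. We go through the same cases.
\begin{itemize}
\item If $C$ is an exceptional fiber $e_i$, then $D\cdot C=2$. We bound $s\le (P-E_j)\cdot e_i$-type quantities via Lemma \ref{LemmaLengthInclusions}, using $\mathbf l_0=\Lambda_1\cap\Lambda_2$ where $\Lambda_i\equiv P-E_i$; this gives $s\le 1<2$.
\item If $C\subset E_1$ with $C\equiv 2\delta r+(\epsilon-4\delta)e_1$, then $D\cdot C=2\delta+\epsilon$ and $s\le (P-E_2)\cdot C=2\delta$. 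Equality requires $\epsilon=0$, i.e.\ $C$ is a multiple of the negative section. We will need to check that such a $C$ lies in $\Lambda_1$ (the negative section of $E_1\simeq\mathbb F_2$ is $E_1\cap\Lambda_1$), or else it gives a strict inequality.
\item If $\pi(C)$ is not contained in $\mathcal P_1\cup\mathcal P_2$, then $s\le P\cdot C=\alpha$ and $D\cdot C\ge \alpha>$ or $=s$. More sharply, $s\le\Lambda_i\cdot C=\alpha+\beta$ (respectively $\alpha+\gamma$), hence $D\cdot C=3\alpha+\beta+\gamma\ge \alpha+\beta+2\alpha+\gamma\ge s+\alpha>s$, using $-\gamma\le\alpha$ from the non-planar/plane-count bounds.
\item If $\pi(C)\subset\mathcal P_1$, then $\beta=-2\alpha$ and $D\cdot C=\alpha+\gamma$, while $s\le\Lambda_2\cdot C=\alpha+\gamma$. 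This gives the inequality, and equality is allowed.
\end{itemize}
So for $C\subset D$, equality can only occur for curves in $\Lambda_1\cup\Lambda_2$. Conversely, we must show equality always holds for curves in $\Lambda_i$. Here $s=\length(C\cap\mathbf l_0)$, computed inside $\Lambda_1$, equals $C\cdot\mathbf l_0$ on the surface $\Lambda_1$. We should verify this equals $\Lambda_2\cdot C$ in $X$-terms, using that $\Lambda_2|_{\Lambda_1}=\mathbf l_0$ scheme-theoretically, since $\mathbf l_0$ is the complete intersection $\Lambda_1\cap\Lambda_2$ in $D$. Finally, $\Lambda_i\subset F$ by Lemma \ref{LemmaHDHF}, which gives the $F$-containment in this case too.

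\emph{Main obstacle.} The delicate point is the equality characterizations: that $s$ equals the restricted intersection numbers exactly, not just bounded by them, and the borderline subcases (fibers over $L_0$, curves in $E_i$ with $\epsilon=0$). We would settle these by the transversality of $\Lambda_1,\Lambda_2$ along $\mathbf l_0$, which follows from the normal bundle splitting in Proposition \ref{PropositionNegCurves}.
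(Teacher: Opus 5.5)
Your strategy is the paper's. You use Lemma~\ref{LemmaLengthInclusions} with $W=D$ when $C\not\subset D$. For $C\subset D$ you use the numerics $D|_D=3P-E_1-E_2$ with the bounds $s\le (P-E_j)\cdot C$. For the containment in $F$ you use $D\sim F+H$ with $\mathbf l_0\subset F$; your contrapositive ``$C\not\subset F\Rightarrow D\cdot C\ge s+H\cdot C$'' is essentially the paper's computation $F\cdot C=s-\alpha$, and it also covers the fibers over $L_0$.

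However, your first bullet in the case $C\subset D$ is wrong. In fact $D\cdot e_i=(3P-E_1-E_2)\cdot e_i=1$, equivalently $(\xi+2H)\cdot\mathbf f=1$; the number $2$ is $-K_X\cdot e_i$. So ``$s\le 1<2$'' proves nothing, and the strict inequality you assert for exceptional fibers is false. A fiber of $E_1$ over a point of $\mathcal C_1\cap L_0$ meets $\mathbf l_0$ in a single reduced point, so $D\cdot e_1=1=s$, and similarly for $E_2$. These fibers are exactly the exceptional curves of $\Lambda_2\simeq\Bl_{2\,\mathrm{pts}}\mathcal P_2$ (resp.\ $\Lambda_1$). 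As written, the bullet therefore contradicts your own converse direction, which must give equality for every curve in $\Lambda_1\cup\Lambda_2$ other than $\mathbf l_0$.

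The repair uses tools you already have. If $e_1\not\subset\Lambda_2$, Lemma~\ref{LemmaLengthInclusions} with $W=\Lambda_2$ gives $s\le(P-E_2)\cdot e_1=0<1=D\cdot e_1$. If $e_1\subset\Lambda_2$, your complete-intersection argument gives $s=\Lambda_1\cdot e_1=1=D\cdot e_1$. The paper treats these fibers the same way, as equality cases.

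One further point concerns your justification of transversality. The scheme-theoretic equality $\Lambda_1\cap\Lambda_2=\mathbf l_0$ in $D$ is the input from which the paper derives the splitting of $\mathcal N_{\mathbf l_0/D}$ in Proposition~\ref{PropositionNegCurves}. Invoking that splitting to justify it is therefore circular. You can get it directly instead:
\begin{itemize}
\item set-theoretically $\Lambda_1\cap\Lambda_2=\mathbf l_0$, since over $\mathcal C_1$ one has $\Lambda_1\cap E_1=\Gamma^-$, while $\Lambda_2\cap E_1$ consists of the fibers over $\mathcal C_1\cap L_0$;
\item hence $\Lambda_2|_{\Lambda_1}$ is an effective Cartier divisor on $\Lambda_1$ with cycle $m\,\mathbf l_0$;
\item finally $P\cdot\Lambda_1\cdot\Lambda_2=1=P\cdot\mathbf l_0$ forces $m=1$.
\end{itemize}
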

	\begin{proof}
		First of all, if $C\not\subset D$, then by Lemma \ref{LemmaLengthInclusions}, $D\cdot C\ge s$. 
		
		If $C\subset D$, then let $\alpha,\beta,\gamma\in\mathbb{Z}$ be the coefficients of $C\equiv\alpha \mathit{r}+\beta\mathit{e_1}+\gamma\mathit{e_2}$.
		Observe that, by Lemma \ref{LemmaD}, $D\cdot C=(3P-E_1-E_2)\cdot C=3\alpha+\beta+\gamma$, and $(P-E_2)\cdot C=\alpha+\gamma$. If $C$ is not contained in the transform of the plane containing $\mathcal{C}_2$, it is enough to prove that $D\cdot C\ge \alpha+\gamma$, by Lemma \ref{LemmaLengthInclusions}. 
		We can go through all possible choices of $C$ as in Lemma \ref{LemmaD}. 
		\begin{itemize}
			\item If $C\subset E_1$, then there exist $\delta,\epsilon\in\mathbb{Z}_{\ge 0}$ such that $C\equiv2\delta\mathit{r}+(\epsilon-4\delta)\mathit{e_1}$. Then $D\cdot C=6\delta+\epsilon-4\delta=2\delta+\epsilon$, and $(P-E_2)\cdot C=2\delta$. Now, $2\delta+\epsilon\ge 2\delta$ for all $\delta,\epsilon\in\mathbb{Z}_{\ge 0}$. Suppose $\delta\ge 1$. Then $D\cdot C=(P-E_2)\cdot C$ if and only if $\epsilon=0$. In that case $C\equiv\delta(2\mathit{r}-4\mathit{e_1})$, so $C\equiv\delta\Gamma^{-}$, when seen in $\NE(E_1)$. Since we are assuming that $C$ is reduced, we have $\delta=1$, and $C$ is the negative section $\Gamma^{-}$ of the $\mathbb{P}^1$-bundle $E_1\to \mathcal{C}_1$. In this case $\length(C\cap\mathbf{l}_0)=2=D\cdot C$. Observe that $(P-E_1)\cdot C=-2$, so that $C$ is contained in the strict transform of the plane containing $\mathcal{C}_1$.
			
			On the other hand, if $\delta=0$, then $\epsilon=1$, $C\equiv\mathit{e_1}$, and there are two fibers in this class that satisfy $D\cdot C=1=\length(C\cap\mathbf{l}_0)=s$, which are again contained in the strict transform of the plane containing $\mathcal{C}_1$. 
			
			We can argue similarly in the case $C\subset E_2$.
			\item If $\pi(C)$ is not coplanar to $\mathcal{C}_1$ and $\mathcal{C}_2$, then $\alpha\ge\max\{-\beta,-\gamma\}$, and $D\cdot C=3\alpha+\beta+\gamma\ge 2\alpha+\gamma > \alpha+\gamma$.
			\item If $\pi(C)$ is coplanar to $\mathcal{C}_1$ (but does not coincide with $\mathcal{C}_1$), then $-\beta=2\alpha$, $\gamma\in\{-2,-1,0\}$, and $D\cdot C= 3\alpha+\beta+\gamma=\alpha+\gamma=(P-E_2)\cdot C$. Observe that in this case $s= (P-E_2)\cdot C$, so $D\cdot C=s$.  
			
			We can argue analogously if $\pi(C)$ is coplanar to $\mathcal{C}_2$. 
		\end{itemize}  
		Finally, observe that if $C\equiv\alpha\mathbf{l}_0+\beta\mathbf{f}$ in $\NE(X)$, then if $\alpha=0$, it follows immediately that $\beta=1$, $D\cdot C= 1$, and $s=1$ if and only if $C$ is a fiber of $\pi^{-1}(L_0)$, otherwise $s=0$.
		
		If $\alpha\ge 1$, then $\pi(C)$ is an irreducible curve in $\mathbb{P}^3$. There exists a plane $\mathcal{P}\subset\mathbb{P}^3$ such that $L_0$ is contained in $\mathcal{P}$, and $\pi(C)$ is not. Then the pull-back $\pi^{*}\mathcal{P}$ belongs to the linear system of the pull-back of a generic plane $H$ in $X$, $\mathbf{l}_0$ is contained in it and $C$ is not. Therefore, by Lemma \ref{LemmaLengthInclusions}, $s=\length(C\cap \mathbf{l}_0)\le\length(C\cap \pi^{*}\mathcal{P})=H\cdot C=\alpha$. 
		Suppose now that $D\cdot C=s$, then by a quick computation one gets $(\xi+H)\cdot C=s-\alpha\le 0$. Recall that $h^0(\xi+H)=1$ by \cite[Proposition 2.2]{HS81}, and the only effective divisor $F$ in that linear system contains $\mathbf{l}_0$ (because $F\cdot \mathbf{l}_0=-2$). It follows that $C$ must necessarily be contained in $F$.  
	\end{proof}
	
	\begin{corollary}\label{CorollaryDF}
		Let $C\ne\mathbf{l}_0$ be an irreducible curve in $X$, and $s=\length(C\cap \mathbf{l}_0)$. Let $C^+$ be its transform in $X^+$. Then $D^+\cdot C^+\ge 0$ and $D^+\cdot C^+=0$ if and only if $D\cdot C=s$. 
		
		Moreover, if $D^+\cdot C^+=0$, then $C^+$ is contained in $F^+$.    
    \end{corollary}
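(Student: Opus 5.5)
The plan is to combine the pull-back relation for $D$ under the flip, established just before Remark \ref{RemarkF}, with Lemma \ref{LemmaD+}. On $\hat{X}$ we have $\sigma^{*}D-E=\hat{D}=(\sigma^{+})^{*}D^{+}$, because $D$ contains $\mathbf{l}_0$ with multiplicity one. We will check this multiplicity claim: $D$ is smooth along $\mathbf{l}_0$, being isomorphic to $\Bl_{\mathcal{C}}\mathbb{P}^3$.

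Now let $C\neq\mathbf{l}_0$ be irreducible and let $\hat{C}$ be its strict transform in $\hat{X}$. Then $C^{+}=\sigma^{+}(\hat{C})$, which is a curve since $C\neq\mathbf{l}_0$. The projection formula gives $D^{+}\cdot C^{+}=\hat{D}\cdot\hat{C}=D\cdot C-E\cdot\hat{C}$. We will use $E\cdot\hat{C}=s$, the same standard identity already used in the proof of Proposition \ref{PropX+} to get $-K_{X^+}\cdot C^+=-K_X\cdot C-s$. To justify it one can note that $\hat{C}\cap E$ computes the length of $C\cap \mathbf{l}_0$ scheme-theoretically; this identification is the only delicate point, and we simply invoke it exactly as the paper does there. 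This yields
\[
D^{+}\cdot C^{+}=D\cdot C-s .
\]

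From this the first two assertions are immediate. Lemma \ref{LemmaD+} gives $D\cdot C\ge s$, hence $D^{+}\cdot C^{+}\ge 0$, and the displayed identity shows that equality $D^{+}\cdot C^{+}=0$ holds exactly when $D\cdot C=s$.

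For the last assertion, suppose $D^{+}\cdot C^{+}=0$. Then $D\cdot C=s$, and the second sentence of Lemma \ref{LemmaD+} gives $C\subset F$. Since $C\neq\mathbf{l}_0$ and $\phi$ is an isomorphism outside $\mathbf{l}_0$, the generic point of $C$ lies in $F\setminus\mathbf{l}_0$. Its image therefore lies in $F^{+}$, the closure of $\phi(F\setminus \mathbf{l}_0)$. Taking closures gives $C^{+}\subset F^{+}$.

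We should also address curves $C^+$ in $X^+$ that are not transforms of curves in $X$, namely curves inside $\mathbf{L}_0$. These are outside the scope of the statement, which only concerns transforms of curves $C\ne\mathbf{l}_0$.
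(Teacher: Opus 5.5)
Your proof is correct. For the first assertion it is the paper's own argument: the identity $D^+\cdot C^+=D\cdot C-s$ stated before Remark~\ref{RemarkF}, combined with Lemma~\ref{LemmaD+}. Your identification $E\cdot\hat C=s$ is the same one the paper makes. Strictly speaking, $E\cdot\hat C$ is a sum of Hilbert--Samuel multiplicities of $I_{\mathbf l_0}\mathcal O_C$, and it equals the length $s$ when $C$ is smooth along $C\cap\mathbf l_0$.

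For the last assertion your route is different and shorter. The paper argues numerically on $X^+$. For $D^+$-trivial curves that are not fibers over $L_0$, it extracts $F\cdot C=s-\alpha$ with $\alpha\ge1$ from the proof of Lemma~\ref{LemmaD+}, and then uses Remark~\ref{RemarkF} to get $F^+\cdot C^+\le-\alpha<0$. For a fiber $f$ over $L_0$ it only obtains $F^+\cdot f^+\le0$, so it needs an extra argument through a degree-one curve in some $\Lambda_{\alpha,\beta}$ meeting $f$.

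You instead transport the inclusion $C\subset F$ through the isomorphism $X\setminus\mathbf l_0\simeq X^+\setminus\mathbf L_0$. This handles all curves at once and needs neither Remark~\ref{RemarkF} nor the case split. The paper's route additionally shows $F^+\cdot C^+<0$ for the non-fiber curves, but the statement does not require this.

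One point should be made explicit. The paper's proof of Lemma~\ref{LemmaD+} only derives $C\subset F$ when $H\cdot C\ge1$. The fibers over $L_0$ are the only fibers with $D\cdot C=s$, and for them you should note that $\pi^{-1}(L_0)\subset F$. This holds because the section of $\mathcal E(1)$ defining $F$ vanishes along $Z$, whose support is $L_0$.
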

    \begin{proof}
    	We have $D^+\cdot C^+=D\cdot C-s\ge 0$ by Lemma \ref{LemmaD+}. Now assume $D^+\cdot C^+=0$, i.e. $D\cdot C=s$. If $C$ is not a fiber of $\pi^{-1}(L_0)$, then by the proof of Lemma \ref{LemmaD+}, we get $F\cdot C=s-\alpha$ for some $\alpha\ge 1$, which yields $F^+\cdot C^+\le F\cdot C-s=-\alpha\le -1$, hence $C^+\subset F^+$. Observe that among those curves $C$ in $X$, there are all curves of anticanonical degree one in $X$.
    	
    	On the other hand, if $f$ is a fiber of $\pi^{-1}(L_0)$, then $s=1$ and $F\cdot f=1$, so that 
        $F^+\cdot f^+\le F\cdot f-s=0$. So either $f^+$ is disjoint from $F^+$ or it is contained in it. Observe that
    	there exists $[\alpha:\beta]$ such that $f\subset \Lambda_{\alpha,\beta}$, and a curve $C$ in $\Lambda_{\alpha,\beta}$ with degree one in $X$ such that $C\cap f\ne\emptyset$ (see the description of $\Lambda_{\alpha,\beta}$ in Lemma \ref{LemmaHGeometry} and the description of degree one curves in $X$). By the discussion above, $C^+\subset F^+$, hence $f^+\subset F^+$.   
    \end{proof}
	\begin{proposition}\label{PropositionContractionX+}
		The divisor $D^+$ in $X^+$ is nef and is the supporting divisor for an extremal divisorial contraction of $X^+$ of type $(3,1)$ that on the exceptional divisor is a quadric bundle onto a curve, whose fibers are all smooth except for at most two of them. The exceptional divisor of that contraction coincides with the transform $F^+$ in $X^+$ of the only effective divisor $F$ in the linear system $|\xi+H|$ in $X$.   
	\end{proposition}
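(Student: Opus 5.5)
We show first that $D^+$ is nef and not ample, then identify the curves it contracts, and finally read off the structure of the contraction from the geometry of the surfaces $\Lambda_{\alpha,\beta}$.

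\textbf{Nefness and extremality.} By Corollary \ref{CorollaryDF}, $D^+\cdot C^+\ge 0$ for every transform $C^+$ of an irreducible curve $C\ne\mathbf{l}_0$ of $X$. By the adjunction computation preceding Remark \ref{RemarkF}, $D^+\cdot\mathbf{l}_0^+=1$ for lines $\mathbf{l}_0^+$ in the exceptional plane $\mathbf{L}_0$. Since every curve of $X^+$ is either such a transform or lies in $\mathbf{L}_0$, $D^+$ is nef. It is not ample, because $D^+\cdot C^+=0$ for the transforms of the anticanonical-degree-one curves of Lemma \ref{LemmaDeg1} (these satisfy $D\cdot C=0=s$). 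As $\rho_{X^+}=2$ and $X^+$ is Fano, $\Nef(X^+)$ has two rays and $\NE(X^+)$ has two extremal rays. One ray is spanned by $[\mathbf{l}_0^+]$ and contracted by $\gamma_1$, giving back $W$. The class $[D^+]$ therefore spans the other boundary ray of $\Nef(X^+)$. By the basepoint-free theorem, $|mD^+|$ defines the second extremal contraction $\gamma_2\colon X^+\to Z_2$.

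\textbf{Exceptional locus.} By Corollary \ref{CorollaryDF}, every curve contracted by $\gamma_2$ is a transform $C^+$ with $D\cdot C=s$, and lies in $F^+$. Hence $\Exc(\gamma_2)\subseteq F^+$, and since $F^+$ is a prime divisor, $\gamma_2$ is either divisorial with exceptional divisor $F^+$ or small. To show that it is divisorial, we exhibit a covering family. By Lemma \ref{LemmaD+}, the curves with $D\cdot C=s$ lying in some $D$ are exactly the curves in the transforms $\Lambda_{\alpha,\beta}$ of planes through $L_0$. Using Corollary \ref{CorollaryDeg1} and Lemma \ref{LemmaHDHF}, every $\Lambda_{\alpha,\beta}$ is covered by contracted curves. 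The surfaces $\Lambda_{\alpha,\beta}$ sweep out $F$ as $[\alpha:\beta]$ varies in $\mathbb{P}^1$, so $\gamma_2$ is divisorial with $\Exc(\gamma_2)=F^+$. The fibers of $F^+\to\gamma_2(F^+)$ are then the surfaces $\Lambda^+_{\alpha,\beta}$, the transforms of the $\Lambda_{\alpha,\beta}$ in $X^+$. Any two of them are disjoint in $X^+$, because by Proposition \ref{PropositionBlowup} their traces on $E$ are $\mathbb{P}^1\times\{q_{\alpha,\beta}\}$ with distinct $q_{\alpha,\beta}$, and these separate after $\sigma^+$ collapses $E$ onto $\mathbb{P}^2$. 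The base $C=\gamma_2(F^+)$ is thus a curve, parametrized by $[\alpha:\beta]$, and the contraction is of type $(3,1)$.

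\textbf{Fibers.} We compute $\Lambda^+_{\alpha,\beta}=\sigma^+(\hat\Lambda_{\alpha,\beta})$. By Proposition \ref{PropositionBlowup}, $\hat\Lambda_{\alpha,\beta}\cong\Lambda_{\alpha,\beta}$ and $\hat\Lambda_{\alpha,\beta}\cap E=\mathbb{P}^1\times\{q_{\alpha,\beta}\}$, which $\sigma^+$ collapses to a point. In case (a), $\Lambda_{\alpha,\beta}\cong\Bl_{2pts}\mathbb{P}^2$, and $\sigma^+$ contracts the strict transform of the line through the two blown-up points. The result is $\mathbb{P}^1\times\mathbb{P}^1$, a smooth quadric. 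In case (b), $\Lambda_{\alpha,\beta}$ is the blow-up of a quadric cone at a smooth point, and $\sigma^+$ contracts its $(-1)$-curve, giving the quadric cone, which is singular. By Lemma \ref{LemmaHGeometry}, case (b) occurs for exactly one or two values of $[\alpha:\beta]$, which gives the claim about singular fibers.

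\textbf{Main obstacle.} The delicate step is to make rigorous that $\sigma^+|_{\hat\Lambda_{\alpha,\beta}}$ is exactly the contraction of that single $(-1)$-curve, so that the image is normal and really a quadric, and that these images are the full scheme-theoretic fibers of $\gamma_2$. I would argue via $-K_{X^+}$-degrees. The ruling lines of each $\Lambda^+_{\alpha,\beta}$ have $-K_{X^+}\cdot C^+=1$. Together with the smoothness of $X^+$ and Ando's theorem on divisorial contractions onto curves, this identifies the general fiber as a quadric, and Proposition \ref{PropositionBlowup} then pins down the special fibers.
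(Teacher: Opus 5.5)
Your route is essentially the paper's. Nefness comes from Corollary~\ref{CorollaryDF} together with $D^+\cdot\mathbf{l}_0^+=1$, and the fibers are read off from Proposition~\ref{PropositionBlowup} and Lemma~\ref{LemmaHGeometry}. For divisoriality you argue more directly. You get $\Exc(\gamma_2)\subseteq F^+$ from Corollary~\ref{CorollaryDF}, and $\Exc(\gamma_2)\supseteq F^+$ from the anticanonical-degree-one curves sweeping the $\Lambda_{\alpha,\beta}$. The paper instead excludes the small case: such a contraction would have finitely many disjoint planes as exceptional locus, which cannot contain infinitely many quadrics. Both arguments work.

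There is one step that does not follow as written. From the pairwise disjointness of the $\Lambda^+_{\alpha,\beta}$ you conclude that $\gamma_2(F^+)$ is a curve. Disjointness, even combined with each $\Lambda^+_{\alpha,\beta}$ going to a point, does not exclude that $\gamma_2$ sends all of $F^+$ to a single point, which would make the contraction of type $(3,0)$. A connected three-dimensional fiber can perfectly well be a union of infinitely many disjoint surfaces. You need a curve in $F^+$ that is not $D^+$-trivial. The paper uses the conic $\sigma^+(\hat S_0\cap E)\subset F^+\cap\mathbf{L}_0$, whose $D^+$-degree is $2$.

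With what you already have, it suffices to note the following. The points $q_{\alpha,\beta}=\Lambda^+_{\alpha,\beta}\cap\mathbf{L}_0$ are pairwise distinct and lie in $F^+$. So $F^+\cap\mathbf{L}_0$ is an infinite closed subset of $\mathbf{L}_0\simeq\mathbb{P}^2$, hence contains a curve. Every curve in $\mathbf{L}_0$ has positive $D^+$-degree, so $F^+$ is not contracted to a point. Only after this do connectedness of fibers and disjointness give that each fiber of $F^+\to\gamma_2(F^+)$ is exactly one $\Lambda^+_{\alpha,\beta}$.

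Also, what puts $\Lambda^+_{\alpha,\beta}$ inside a single fiber is Lemma~\ref{LemmaD+}: \emph{every} curve of $\Lambda_{\alpha,\beta}$ other than $\mathbf{l}_0$ has $D\cdot C=s$. Being covered by contracted curves is not enough on its own.

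Your ``main obstacle'' paragraph goes beyond the paper. The paper identifies the fibers only set-theoretically and then invokes Takagi's classification (Main Theorem (4q)) in the subsequent remark. Takagi's classification is the appropriate tool there, not Ando's theorem, which concerns contractions whose fibers have dimension at most one.
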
	
	\begin{proof}
		By Lemma \ref{LemmaD+}, if $C$ is an irreducible curve distinct from $\mathbf{l}_0$, then $D\cdot C\ge s$, i.e. $D^+\cdot C^+=D\cdot C-s\ge 0$, where $D^+$ and $C^+$ are the transforms in $X^+$ of $D$ and $C$ respectively. Moreover, if $C$ is contained in $D$, then $D\cdot C=s$, i.e. $D^+\cdot C^+=0$, if and only if $C\ne\mathbf{l}_0$ lies in the strict transform of one of the planes containing $\mathcal{C}_1$ or $\mathcal{C}_2$. Together with the fact that $D^+\cdot\mathbf{l}_0^+=1\ge 0$, this proves that $D^+$ is a nef divisor in $X^+$ that is not ample. Since $\rho_{X^+}=2$, we deduce that it spans an extremal ray of the nef cone of $X^+$. Let $\phi_{D^+}$ be the extremal contraction induced by a multiple of $D^+$. The contraction cannot be of fiber type, since its fibers are covered by $D^+$-trivial curves, whereas the $D^+$-trivial curves do not cover $X^+$ (they are all contained in the divisor $F^+\subset X^+$, by Corollary \ref{CorollaryDF}).
		
		Observe that by Lemma \ref{LemmaD+} applied to any $D_{\alpha,\beta}^{\alpha',\beta'}\sim D$, we deduce that for all $[\alpha:\beta]\in\mathbb{P}^1$, the transforms $\Lambda_{\alpha,\beta}^{+}$ of $\Lambda_{\alpha,\beta}$ in $X^{+}$ are contained in the locus of $\phi_{D^+}$. Now, by Proposition \ref{PropositionBlowup}, the section $\hat{\Lambda}_{\alpha,\beta}\cap E$ is isomorphic to $\mathbb{P}^1\times\{q_{\alpha,\beta}\}$ when seen in $E\simeq\mathbb{P}^1\times\mathbb{P}^2$, where $q_{\alpha,\beta}$ is a point in $\mathbb{P}^2$. This implies that the restriction of $\sigma^{+}$ to $\hat{\Lambda}_{\alpha,\beta}$ coincides with the contraction of the curve $\hat{\Lambda}_{\alpha,\beta}\cap E$. There are two cases:
		
		\begin{itemize}
			\item[(a)] For a general $[\alpha:\beta]\in\mathbb{P}^1$, we get $\Lambda_{\alpha,\beta}\simeq \Bl_{2pts}\mathbb{P}^2$. In this case, the section $\hat{\Lambda}_{\alpha,\beta}\cap E$, when seen in $\hat{\Lambda}_{\alpha,\beta}\simeq \Bl_{2 pts}\mathbb{P}^2$, coincides with the transform of the line passing through the blown-up points. This means that the contraction $\hat{\sigma}\colon \hat{X}\to X^+$ restricts to the contraction of the mentioned $(-1)-$curve in $\hat{\Lambda}_{\alpha,\beta}$, yielding $\Lambda_{\alpha,\beta}^+\simeq \mathbb{P}^1\times\mathbb{P}^1$, which meets $\mathbf{L}_0$ in $\{q_{\alpha,\beta}\}$.
			\item[(b)] For exactly one or two choices of $[\alpha:\beta]\in\mathbb{P}^1$, we obtain that $\Lambda_{\alpha,\beta}$ (and its transform $\hat{\Lambda}_{\alpha,\beta}$) is isomorphic to the blow-up of a quadric cone at a smooth point. In this case, the section $\hat{\Lambda}_{\alpha,\beta}\cap E$, when seen in $\hat{\Lambda}_{\alpha,\beta}$, coincides with the exceptional curve of said blow-up, so that the image $\Lambda_{\alpha,\beta}^+$ of the restriction of $\sigma^{+}$ to $\hat{\Lambda}_{\alpha,\beta}$ is isomorphic to a quadric cone $\mathbb{P}(1,1,2)$ meeting $\mathbf{L}_0$ in $\{q_{\alpha,\beta}\}$.  
		\end{itemize}  
		
		Moreover, again from Proposition \ref{PropositionBlowup}, we have $\underset{[\alpha:\beta]\in\mathbb{P}^1}{\bigcup}\{q_{\alpha,\beta}\}=\sigma^{+}(\hat{S}_0\cap E)$ and $\sigma^{+}|_{\hat{S}_0\cap E}$ is a finite map onto its image, with degree two. Since $\hat{S}_0\cap E$ is a curve in $E\simeq\mathbb{P}^1\times\mathbb{P}^2$ with bidegree $(1,4)$, we deduce that $\sigma^{+}(\hat{S}_0\cap E)$ is a curve of degree equal to $\frac{4}{\deg(\sigma^{+}|_{\hat{S}_0\cap E})}=2$ in $\sigma^{+}(E)=\mathbf{L}_0\simeq\mathbb{P}^2$.
		
		Now, if $\phi_{D^+}$ were a small contraction, its locus would be the disjoint union of finitely many copies of $\mathbb{P}^2$ in $X^+$, by Lemma \ref{LemmaSQMFano}. However, we proved that there are infinitely many distinct copies of $\mathbb{P}^1\times\mathbb{P}^1$ in such locus, so $\phi_{D^+}$ cannot be small, and must therefore be divisorial. Also, observe that in case $(a)$, the two rulings of $\Lambda_{\alpha,\beta}^+$ are obtained as the images through $\sigma^{+}$ of two classes of curves that are distinct in $\NE(\hat{\Lambda_{\alpha,\beta}})$ (they are the two $(-1)$-curves above the blown-up points in $\Bl_{2 pts}\mathbb{P}^2$), but coincide in $\NE(\hat{X})$, because they are the transforms in $\hat{X}$ of two fibers of the $\mathbb{P}^1$-bundle $X\to \mathbb{P}^3$ meeting $\mathbf{l}_0$ transversally in one point. This implies that $\Lambda_{\alpha,\beta}^+$ are two-dimensional fibers of $\phi_{D^+}$. Recall that the surfaces $\Lambda_{\alpha,\beta}$ sweep the divisor $F\in|\xi+H|$ in $X$ (see Lemma \ref{LemmaHDHF}), and they pairwise intersect only along $\mathbf{l}_0$. One can then check that their transforms $\Lambda_{\alpha,\beta}^+$ in $X^+$ are pairwise disjoint and are all contained in the effective divisor $F^+$. Moreover, since the $\Lambda_{\alpha,\beta}$ sweep $F$, their transforms $\Lambda_{\alpha,\beta}^+$ sweep $F^+$.
		Thus $F^+$ is covered by two-dimensional fibers of $\phi_{D^+}$. Since $F^+$ is a divisor in the 4-fold $X^+$, it follows that $\phi_{D^+}(F^+)$ is either a curve or a point. Notice that, since $F$ contains $S_0$, its transform $F^+$ contains the curve $\sigma^+(\hat{S}_0\cap E)$, that is a conic in $\mathbf{L}_0$, as noted above. Its intersection number with $D^+$ is therefore equal to $D^{+}\cdot 2\mathbf{l}_0^+=2>0$. In particular not all curves in $F^+$ are numerically equivalent, hence $\phi_{D^+}(F^+)$ must be a curve, $\phi_{D^+}$ has infinitely many two-dimensional fibers isomorphic to $\mathbb{P}^1\times\mathbb{P}^1$, and at most two fibers isomorphic to singular quadric cones.
	\end{proof}
	
	\begin{remark}
		By the classification of contractions of type $(3,1)$ by Takagi, we notice that $\phi_{D^+}$ must fall into the case \cite[Main Theorem (4q)]{Tag99}, i.e it is divisorial and its restriction to the exceptional divisor is a quadric bundle onto a local complete intersection curve $C$ in $Y$, which is analytically locally a hypersurface in $\mathbb{C}^5$. 
	\end{remark}
	
	\begin{corollary}
		We have the following complete description of the cone of divisors in $X$:
		\begin{align*}
			\Nef(X)&=\mathbb{R}_{\ge 0}[H]+\mathbb{R}_{\ge 0}[\xi+3H].\\
			\Mov(X)&=\mathbb{R}_{\ge 0}[H]+\mathbb{R}_{\ge 0}[\xi+2H].\\
			\Eff(X)&=\mathbb{R}_{\ge 0}[H]+\mathbb{R}_{\ge 0}[\xi+H].
		\end{align*}
	\end{corollary}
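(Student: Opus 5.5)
The nef cone is already given by Lemma \ref{LemmaNE}, so only the movable and effective cones remain. For both I will use that $X$ is a Mori Dream Space with $\rho_X=2$, and that it has exactly two SQM models, $X$ and $X^+$. Then $\Mov(X)=\Nef(X)\cup\phi^*\Nef(X^+)$. Here $\phi^*$ is the identification $N^1(X^+)\simeq N^1(X)$ given by strict transform, which is an isomorphism in codimension one.

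\textbf{Movable cone.} The cone $\Nef(X^+)$ has two boundary rays. One is $[\xi+3H]$: this class is the pullback from $W$ and is trivial on $\mathbf{l}_0$, so it supports the small contraction $\gamma_1$. The other is $[D^+]$, which spans an extremal ray of $\Nef(X^+)$ by Proposition \ref{PropositionContractionX+}, and $D\sim\xi+2H$. Since $X^+$ admits no further SQM, the chamber decomposition stops here, and
\[\Mov(X)=\mathbb{R}_{\ge0}[H]+\mathbb{R}_{\ge0}[\xi+2H].\]
As a sanity check, $|\xi+2H|$ has base locus exactly $\mathbf{l}_0$, which has codimension $3$, so $\xi+2H$ is movable. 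Any class beyond it is non-movable, because its transform on $X^+$ is negative on the fibres of the divisorial contraction $\phi_{D^+}$. Those fibres cover $F^+$, so the divisor would contain $F^+$ in its stable base locus.

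\textbf{Effective cone.} The ray $[H]$ is a boundary ray of $\Eff(X)$, since $H$ is nef and not big: $H^4=0$. For the other boundary ray I will use the divisorial contraction $\phi_{D^+}$ of $X^+$. Its exceptional divisor $F^+$ is the transform of $F\in|\xi+H|$, and in a Mori Dream Space the exceptional prime divisor of a divisorial contraction on the boundary of $\Mov$ spans an extremal ray of $\Eff$. This gives
\[\Eff(X)=\mathbb{R}_{\ge0}[H]+\mathbb{R}_{\ge0}[\xi+H].\]

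A more elementary check of this ray is possible. Take a curve class that is $D^+$-trivial and $F^+$-negative, for instance a ruling of some $\Lambda^+_{\alpha,\beta}\simeq\mathbb{P}^1\times\mathbb{P}^1$; these rulings cover $F^+$. Every effective divisor $G\ne F^+$ meets such a curve nonnegatively. I will compute $F^+\cdot C^+$ and $H\cdot C^+$ on a ruling $C^+$, namely the transform of a degree-one curve in $\Lambda_{\alpha,\beta}$, using Remark \ref{RemarkF} and Corollary \ref{CorollaryDF}. This should show that every class strictly beyond $[\xi+H]$ is negative on a covering family of $F^+$. Such a class would then have to contain $F^+$ with arbitrarily large multiplicity, so it cannot be effective.

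The main technical point is the identification of $\Mov(X)$ via the chamber structure. In particular, one has to justify that $\phi_{D^+}$ contracting $F^+$ ends the MDS decomposition on that side. I would cite the standard MDS description in which the chambers of $\Mov$ are the nef cones of the SQMs.
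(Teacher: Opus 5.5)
Your proposal is correct and follows the paper's route. The paper obtains the corollary directly from Lemma~\ref{LemmaNE} and Proposition~\ref{PropositionContractionX+}: $\Nef(X^+)$ is bounded by $[\xi+3H]$ and $[D^+]=[\xi+2H]$, and the latter supports a divisorial contraction with exceptional divisor $F^+\sim\xi+H$; this is exactly the chamber and exceptional-divisor argument you spell out.

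One small fix to your optional elementary check: ``arbitrarily large multiplicity'' is not the right mechanism, since subtracting $F^+$ raises the intersection with the rulings. Instead, for $G\equiv a(\xi+tH)$ with $t<1$, negativity on the rulings forces $\mathrm{mult}_{F^+}G\ge a(2-t)>a$, while a general fiber $\mathbf{f}$ of $\pi$ gives $\mathrm{mult}_{F}G\le G\cdot\mathbf{f}=a$, a contradiction.
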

	\begin{proof}
		It follows directly from Lemma \ref{LemmaNE} and Proposition \ref{PropositionContractionX+}. 
	\end{proof}

		Università di Torino, Dipartimento di Matematica, via Carlo Alberto 10, 10123 Torino - Italy\\
	Email address: pierroberto.pastorino@unito.it
\end{document}